\newcommand{\bigzero}{\mbox{\normalfont\large 0}}
\renewcommand{\Re}{\operatorname{Re}}
\newtheorem{thm}{Theorem}[subsection]
\newtheorem{cor}{Corollary}[subsection]
\newtheorem{lem}[thm]{Lemma}
\newtheorem{prop}[thm]{Proposition}
\newtheorem{defn}{Definition}[subsection]
\newtheorem{exmp}{Example}[subsection]
\newtheorem{rmk}{Remark}[subsection]
\numberwithin{equation}{section}
\def \R{{\mathbb{R}}}
\def \P {\Phi(x)}
\def \japxi{\langle \xi \rangle}
\def \japxin{\langle \xi \rangle_k}
\def \japx{\langle x \rangle}
\def \hyp{Z_{ext}(N)}
\def \pd{Z_{int}(N)}
\def \var{(t,x,\xi)}
\def \J{[0,T] \times \R^n \times \R^n}
\def \la{\langle}
\def \ra{\rangle}
\def \h{h(x,\xi)}
\def \ran{\rangle_k}
\providecommand{\keywords}[1]
{
	\small	
	\textbf{\text{Keywords:}} #1
}
\providecommand{\subclass}[1]
{
	\small	
	\textbf{\text{MSC (2010):}} #1
}
\title{\Large Global Well-Posedness of a Class of Strictly Hyperbolic Cauchy Problems with Coefficients Non-Absolutely Continuous in Time\thanks{Authors dedicate this work to Bhagawan Sri Sathya Sai Baba.}}
\author{\normalsize Rahul Raju Pattar\thanks{rahulrajupattar@sssihl.edu.in (Corresponding Author)} , N. Uday Kiran\thanks{nudaykiran@sssihl.edu.in}  \\ \small Department of Mathematics and Computer Science\\	\small Sri Sathya Sai Institute of Higher Learning, Puttaparthi, India \\
}
\date{}
\begin{document} 
	
		\maketitle	
		
			\begin{abstract}
			We investigate the behavior of the solutions of a class of certain strictly hyperbolic equations defined on $[0,T]\times \R^n$ in relation to a class of metrics on the phase space. In particular, we study the global regularity and decay issues of the solution to an equation with coefficients polynomially bound in $x$ and with their $t$-derivative of order $\textnormal{O}(t^{-q}),$ where $q \in \big[1,\frac{3}{2}\big)$. For this purpose, an appropriate generalized symbol class based on the metric is defined and the associated Planck function is used to define a new class of infinite order operators to perform conjugation. We demonstrate that the solution not only experiences a loss of regularity (usually observed for the case of coefficients bounded in $x$) but also a decay in relation to the initial datum defined in a Sobolev space tailored to the generalized symbol class. Further, we observe that a precise behavior of the solution could be obtained by making an optimal choice of the metric in relation to the coefficients of the given equation. We also derive the cone conditions in the global setting.\\
			\\
			\keywords{Strictly Hyperbolic Operator with Non-Regular Coefficients $\cdot$ Global Well-posedness $\cdot$ Loss of Regularity $\cdot$ Pseudodifferential Operators $\cdot$ Metric on the Phase Space}\\
			\subclass{35L30 $\cdot$ 35S05 $\cdot$ 35B65 $\cdot$ 35B30}
		\end{abstract}
	
	\tableofcontents
	
		\section{Introduction}\label{intro}
	
	Loss of regularity (or derivatives) plays a prominent role in the modern theory of Hyperbolic Cauchy Problems. In certain cases, it has been observed that the solution of a hyperbolic equation experiences a loss of regularity in relation to the initial datum when the coefficients are either irregular or degenerate (see for instance \cite{Cico1,CicoLor,LorReis,Petkov}, and the references therein). There is an extensive literature on the topic of loss of regularity in hyperbolic equations; nevertheless, an optimal estimation of the loss is a far fetched goal. In this paper, using a class of metrics on the phase space, we aim to address the behavior of the solution in relation to the $x$ dependency of the coefficients. 
	
	Metrics on the phase space are now widely used in the pseudodifferential operator theory to address the solvability and regularity issues \cite{nicRodi,Lern}. Last decade has seen the complete resolution of the Nirenberg-Treves conjecture \cite{Dencker} and more recently in obtaining the loss of derivatives in the Ivrii-Petkov conjecture \cite{Petkov,Nishi} - thanks to the metric on the phase space for both these achievements. Furthermore, global issues in the pseudodifferential theory require a direct application of the metrics on phase space. 
	
	The notion of a metric on phase space $T^{*}\R^{n}\ (\cong \R^{2n})$ was first introduced by L. H\"ormander \cite{Horm} who studied the smooth functions $a(x,\xi)$ called symbols, using the metric $\japxi^{2\delta} |dx|^2+ \japxi^{-2\rho} |d\xi|^2$, $0\leq \delta<\rho\leq 1$. That is, the symbol $a(x,\xi)$ satisfies, for some $m \in \R \text{ and } C_{\alpha\beta}>0$,
	\begin{linenomath*}
		\[
		|\partial_\xi^\alpha \partial_x^\beta a(x,\xi)| \leq C_{\alpha\beta} \japxi^{m-\rho|\alpha|+\delta|\beta|}, 
		\]
	\end{linenomath*}
	for all multi-indices $\alpha,\beta \in \mathbb{N}^n$ and $\japxi=(1+|\xi|^2)^{1/2}$. In a more general framework created by R. Beals and R. Fefferman \cite{Feff,nicRodi}, we can consider the symbol $a(x,\xi)$ that satisfies the estimate for some $C_{\alpha\beta}>0,$
	\begin{linenomath*}
		\begin{equation*}		
			|\partial_\xi^\alpha \partial_x^\beta a(x,\xi)| \leq C_{\alpha\beta} \Psi(x,\xi)^{m_1-|\alpha|} \Phi(x,\xi)^{m_2-|\beta|},
		\end{equation*}
	\end{linenomath*}
	where $m_1,m_2 \in \R$ and the positive functions $\Psi(x,\xi)$ and $\Phi(x,\xi)$ are specially chosen. In such a case, as given in \cite{Lern}, we can consider the following Riemannian structure on the phase space 
	\begin{linenomath*}
		\begin{equation}
			\label{beals_fefferman}
			g_{x,\xi}=\frac{|dx|^2}{\Phi(x,\xi)^2}+\frac{|d\xi|^2}{\Psi(x,\xi)^2};
		\end{equation}
	\end{linenomath*}
	thus, one has geometric restrictions on $\Phi(x,\xi)$ and $\Psi(x,\xi)$ of both Riemannian type and Symplectic type \cite{Lern}. In this work, we consider a metric (\ref{beals_fefferman}) with $\Phi(x,\xi)=\Phi(x)$ and $\Psi(x,\xi)=\japxin=(k^2+|\xi|^2)^{1/2}$, for a large parameter $k$. We discuss this class in Section \ref{struct}.  
	
	In this paper, we obtain a global well-posedness result (Theorem \ref{result2}) on a class of strictly hyperbolic equations that have a well-known behavior of a loss of derivatives in the local setting. Along with the extension of the results to a global setting, we also investigate the behavior at infinity of the solution in relation to the coefficients. By a loss of derivatives of the solution in relation to the initial datum we mean a change of indices in an appropriate Sobolev space associated with the metric (\ref{beals_fefferman}). 
	
	To discuss the results from literature on the well-posedness of a Cauchy problem with coefficients low-regular in $t$, we consider the simple case: 
	\begin{linenomath*}
		\begin{equation}
			\label{model}
			u_{tt}-a(t,x)u_{xx}=0, \;\; u(0,x)=g_1(x) \;\textnormal{ and }\; u_t(0,x)=g_2(x).
		\end{equation}
	\end{linenomath*}
	The well-posedness results in Sobolev spaces for the case of Lipschitz regularity in $t$ are already familiar (see \cite[Chapter 9]{Horm1}). In the irregular coefficient category, broadly speaking, there are two approaches which have been employed to weaken the Lipschitz regularity on $a(t,x)$: modulus of continuity based (cf. \cite{CicoLor,AscaCappi1,AscaCappi2}) and non-modulus of continuity based. The latter approach can be based on singular behaviour of type $t^{-q}$, $q \geq 1$, as $t \to 0$, of the first $t$-derivative (cf. \cite{CSK,Cico1,Cico2}), oscillatory behaviour (cf. \cite{KuboReis,KinoReis} ) or low-regularity in second variation with respect to $t$ (cf. \cite{CSFM}).
	
	In \cite{CSK}, Colombini et al. have discussed the well-posedness of (\ref{model}) when $a(t,x)$ is independent of $x$ and admitting an $O(t^{-q}),q\geq 1$, singularity at $t=0$, in the first $t$-derivative. These results were extended by Cicognani \cite{Cico1} to the case of coefficients depending on $x$ as well and satisfying 
	\begin{linenomath*}
		\[
		\vert \partial_t \partial_x^\beta a(t,x)\vert \leq \frac{C_\beta}{t^q}, \quad q \geq 1, \: (t,x) \in (0,T] \times \R^n,
		\] 	
	\end{linenomath*}
	where $a \in C^1((0,T];\mathcal{B}(\R^n))$ $\Big( C([0,T];G^\sigma(\R^n))\cap C^1((0,T];G^\sigma(\R^n))\;$ where $1 < \sigma < q/(q-1) \Big)$ for $q=1(>1, \text{ respectively})$. $\mathcal{B}(\R^n)$ denotes the space of all $C^\infty$ functions that are bounded together with all their derivatives in $\R^n$ and $G^\sigma(\R^n)$ denotes Gevrey space of index $\sigma$. Here the author reports well-posedness in $C^\infty(\R^n)( G^\sigma(\R^n))$ for 
	the Cauchy problem (\ref{model}) with a finite (resp. infinite) loss of derivatives for $q=1$ (resp. $>1$). 
	
	The global results for the SG-hyperbolic systems with the log-Lipschitz and H\"older regularity assumption on $t$  have been studied by Ascanelli and Cappiello in \cite{AscaCappi1} and \cite{AscaCappi2}, respectively. With regard to coefficients with oscillatory behaviour, the second author along with Coriasco and Battisti \cite{NUKCori1} have extended the work of Kubo and Reissig \cite{KuboReis} to the SG setting, using the generalized parameter dependent Fourier integral calculus. The authors consider a weakly hyperbolic Cauchy problem with 
	\begin{linenomath*}
		\[
		\vert \partial_t^l \partial_x^\beta a(t,x)\vert \leq C_{l,\beta} \bigg(\frac{1}{t}\bigg(\ln \frac{1}{t}\bigg)^\gamma\bigg)^l \japx^{2-\vert \beta \vert}, 
		\]
	\end{linenomath*}
	where $(t,x) \in (0,T] \times \R^n, \; 0 \leq \gamma \leq 1,$ for all $l\in \mathbb{N}\textnormal{ and }\beta$ multi-index, for the model problem (\ref{model}). The global extension results not only indicate a loss of regularity in the solution but also display a loss in the decay.
	
	In this paper, we consider a strictly hyperbolic equation of $m^{th}$ order and investigate the role played by the nature of the coefficients being continuous in $t$ and admitting a singularity of order $O(t^{-q}),\;1 \leq q < \frac{3}{2}$, on the $t$-derivative. In the $x$ variable, we consider an optimal $\P$ such that the following holds:
	\begin{linenomath*}
		\begin{equation}
			\label{3}
			C^{-1} \P^2  \leq |a(t,x)| \leq C \P^2,
		\end{equation}  
	\end{linenomath*}
	where $1<C$ and $1 \leq \P \lesssim \japx$ for an optimal choice of $C$ and $\Phi(x)$ as discussed in Section \ref{struct}. Along with the condition (\ref{3}), in our study, we consider the following metric on the phase space:
	\begin{linenomath*}
		\begin{equation}
			\label{om}
			g_{\Phi,k} = \Phi(x)^{-2}\vert dx \vert^2 + \japxin^{-2} \vert d\xi \vert^2.
		\end{equation}
	\end{linenomath*}   
	We develop an operator calculus related to the metric $g_{\Phi,k}$ (see Appendix II and III, also see \cite[Chapter 1 and 6]{nicRodi}). 
	
	A special feature of our calculus is the explicit appearance of the Planck function. In fact, based on the Planck function we define a test function space $\mathcal{M}^\sigma_{\Phi,k}(\R^n)$ (see Definition (\ref{mspace})) which in turn is fundamental in the definition our class of finite and infinite order operators.  
	
	One of the key steps in our analysis that helps in dealing with low-regularity in $t$ is the conjugation by a new class of infinite order pseudodifferential operators of the form $e^{\Lambda(t)(\P\la D_x \ran)^{1/\sigma}}$, where $\Lambda(t)$ is a continuous function for $t\in [0,T], T>0$ and $\la D_x\ra_k=(k^2-\Delta_x)^{1/2}$. Here, the symbol of the operator $\P\la D_x \ran$ is given by $h(x,\xi)^{-1}=\P\japxin$ where $\h$ is the Planck function related to the metric $g_{\Phi,k}$ in (\ref{om}). Note that, in the literature (see \cite{AscaCappi2}), some authors have used an infinite order pseudodifferential operators of the form $e^{\Lambda(t)(\japx^{1/\sigma}+\la D \ra^{1/\sigma})}$ for conjugation. We report (see Theorem \ref{conju}) that the metric governing the decay estimates of the lower order terms changes after the conjugation. In our case, this metric is of the form
	\begin{linenomath*}
		\begin{equation} \label{om2}
			\tilde{g}_{\Phi,k} = \bigg(\frac{\japxin^{\frac{1}{\sigma}}}{\P^\gamma}\bigg)^2|dx|^2 + \bigg(\frac{\P^{\frac{1}{\sigma}}}{\japxin^\gamma}\bigg)^2 |d\xi|^2,
		\end{equation}
	\end{linenomath*}
	where $\gamma=1-\frac{1}{\sigma}$. We demonstrate this in Section \ref{Conj}.
	
	The Sobolev space is defined using the infinite order pseudodifferential operator $e^{\Lambda(t)(\P\la D_x \ran)^{1/\sigma}}$ and we prove the well-posedness in the scale of these spaces with an appropriate choice of the function $\Lambda(t)$. Furthrtmore, in Section \ref{cone}, we derive an optimal cone condition for the solution of the Cauchy problem from the energy estimate used in proving the well-posedness.
	
			\section{The Main Result}  
	Before we state the main result of this paper, we introduce our model $m^{th}$ order strictly hyperbolic equation and define an appropriate Sobolev space for our purpose. 
	
	\subsection{Our Model of Strictly Hyperbolic Operator}
	We deal with the Cauchy problem 
	\begin{linenomath*}
		\begin{equation}
			\begin{cases}
				\label{eq1}
				P(t,x,\partial_t,D_x)u(t,x)= f(t,x), \qquad (t,x) \in [0,T] \times \R^n, \\
				\partial_t^{j-1}u(0,x)=f_j(x), \qquad \qquad \qquad \; j=1,\dots,m
			\end{cases}
		\end{equation}
	\end{linenomath*}
	where the operator $P(t,x,\partial_{t},D_{x})$ is given by
	\begin{linenomath*}		
		\begin{align*}
			P=\partial_t^m-\sum_{j=0}^{m-1}\Big(A_{m-j}(&t,x,D_x)+B_{m-j}(t,x,D_x)\Big)\partial_t^j \quad \text{ with }\\
			A_{m-j}(t,x,D_x)  &=\sum_{\vert \alpha \vert+j=m}a_{j,\alpha}(t,x)D_x^\alpha \quad \text{ and } \\
			B_{m-j}(t,x,D_x) &=\sum_{\vert \alpha\vert+j<m}b_{j,\alpha}(t,x)D_x^\alpha,
		\end{align*}
	\end{linenomath*}
	using the usual multi-index notation we denote $D_x^\alpha={(-i)}^\alpha\partial_x^\alpha$. 
	The operator $P$ in (\ref{eq1}) is said to be strictly hyperbolic operator if  the symbol of the principal part	
	\begin{linenomath*}
		\begin{align*}
			P_m(t,x,i\tau,\xi) &={(i\tau)}^m-\sum_{j=0}^{m-1}A_{m-j}(t,x,\xi){(i\tau)}^j \\
			&= {(i\tau)}^m-\sum_{j=0}^{m-1}\sum_{\vert \alpha\vert+j=m}a_{j,\alpha}(t,x)\xi^\alpha{(i\tau)}^j
		\end{align*}
	\end{linenomath*}
	has purely imaginary characteristic roots $i\tau_j(t,x,\xi),\: j=1,\dots,m$ where $\tau_j(t,x,\xi)$ is a real-valued, simple function in $t$ and positively homogeneous of degree $1$ for $\xi \not=0$ in $\mathbb{R}^{n}$. These roots are numbered and arranged so that \begin{linenomath*}
		\begin{align}
			\tau_1(t,x,\xi)<\tau_2(t,x,\xi)&<\cdots<\tau_m(t,x,\xi), \quad \text {and} \\
			\label{roots}
			C \P \japxi &\leq  \vert \tau_j(t,x,\xi) \vert,
			%\tau_{k+1}(t,x,\xi)-\tau_{k}(t,x,\xi) &> C_2 \Phi(x) \japxi, 
		\end{align}
	\end{linenomath*}
	for some $C=C(t)>0$, for all $t\in[0,T],\: x,\xi \in \R^n$ and $1 \leq j \leq m$. 	
	
	We assume that the coefficients are non-absolutely continuous in $t$ and this low-regular behaviour of the coefficients $a_{j,\alpha}$ corresponding to the top order terms (i.e., $|\alpha|+j=m$) is characterized by singular behavior of order $O(t^{-q}),\; 1\leq q < \frac{3}{2}$, at $t = 0$ of their first $t$-derivatives. We assume $a_{j,\alpha} \in C([0,T];C^\infty(\R^n)) \cap C^1((0,T];C^\infty(\R^n))$ satisfy
	\begin{linenomath*}	
		\begin{align}
			\label{bound}
			\vert D_x^\beta a_{j,\alpha}(t,x) \vert &\leq C^{|\beta|} \beta!^\sigma \P^{m-j-\vert \beta \vert},	\qquad (t,x) \in [0,T] \times \R^n, \\
			\label{B-up2}
			\vert D_x^\beta \partial_t a_{j,\alpha}(t,x) \vert &\leq C^{|\beta|} \beta!^\sigma  \P^{m-j-\vert \beta \vert}\frac{1}{t^q}, \quad (t,x) \in (0,T] \times \R^n,
		\end{align} 
	\end{linenomath*}
	for $3 \leq \sigma < q/(q-1)$ and the coefficients of the lower order terms, $b_{j,\alpha} \in C^1([0,T];C^\infty(\R^n))$ satisfy
	\begin{linenomath*}
		\begin{equation}
			\label{Lower}
			\vert D_x^\beta b_{j,\alpha}(t,x) \vert \leq C^{|\beta|} \beta!^\sigma \P^{m-j-1-\vert \beta \vert},	\quad (t,x) \in [0,T] \times \R^n,\quad C>0.
		\end{equation}
	\end{linenomath*}
	
	A simple example of a partial differential equation of the type (\ref{eq1}), for $m=2$, $(t,x) \in [0,1]\times\R$, and $\kappa \in [0,1]$ is given below. 
	\begin{exmp}
		$P=\partial_t^2-\japx^{2\kappa} \left(2+\sin \left( \japx^{1-\kappa}\right)\right)  f(t)\partial_x^2$, where $f(t)=1 + t\sin\left(\frac{1}{t^{4/3}}\right)$ for $t \in (0,1]$ and $f(0) \equiv 1$. Here $\P=\japx^\kappa$ and $q = \frac{4}{3}$.
	\end{exmp}
	
	Observe that we have assumed $3 \leq \sigma < q/(q-1)$ where as in \cite{CSK,Cico1}, it is $1 < \sigma < q/(q-1)$. The increase in the lower bound for $\sigma$ is due to two factors: $(i)$ the uncertainty principle (which applied to the metric $\tilde{g}_{\Phi,k}$ gives $\sigma>2$. This will be dealt in detail in Section \ref{struct}.), $(ii)$ the application of sharp G\r{a}rding inequality in our context dictates that $\sigma \geq 3$ (this is discussed in Section \ref{sgi}). Due to this increment in $\sigma$, we have $q \in \big[1,\frac{3}{2}\big).$
	
	\subsection{Sobolev Spaces}
	We now introduce the Sobolev space related to the metric $g_{\Phi,k}$ that is suitable for our analysis. 
	
	\begin{defn} \label{Sobo}   
		The Sobolev space $H^{s,\varepsilon,\sigma}_{\Phi,k}(\R^n)$ for $\sigma > 2$, $\varepsilon \geq 0$ and $s=(s_1,s_2) \in \R^2$ is defined as
		\begin{linenomath*}
			\begin{equation}
				\label{Sobo2}
				H^{s,\varepsilon,\sigma}_{\Phi,k}(\R^n) = \{v \in L^2(\R^n): \P^{s_2}\la D \ran^{s_1}\exp\{\varepsilon (\P\la D_x \ran)^{1/\sigma}\}v \in L^2(\R^{n}) \},
			\end{equation} 
		\end{linenomath*}
		equipped with the norm
		$
		\Vert v \Vert_{\Phi,k;s,\varepsilon,\sigma} = \Vert \Phi(\cdot)^{s_2}\la D \ran^{s_1}\exp\{\varepsilon (\Phi(\cdot)\la D \ran)^{1/\sigma}\}v \Vert_{L^2} .
		$ 
		The operator $\exp\{\varepsilon (\P\la D_x \ran)^{1/\sigma}\}$ is an infinite order pseudodifferential operator with the Fourier multiplier $\exp\{\varepsilon(\P\japxin)^{1/\sigma}\}$.		
	\end{defn}	
	The spaces $H^{s,\varepsilon,\sigma}_{\Phi,k}(\R^n)$ and $H^{-s,-\varepsilon,\sigma}_{\Phi,k}(\R^n)$ are dual to each other. Let $s'=(s_1',s_2') \in \R^2$, $\varepsilon' \geq 0$ and $\sigma' > 2$. We have that $H^{s,\varepsilon,\sigma}_{\Phi,k}(\R^n) \subset H^{s',\varepsilon',\sigma'}_{\Phi,k}(\R^n)$ if $\sigma \leq \sigma'$, $\varepsilon' \leq \varepsilon$, $s_j' \leq s_j,j=1,2$.
	\begin{defn}\label{mspace}
		The function space $\mathcal{M}^\sigma_{\Phi,k}(\R^n),\sigma\geq 3$ is a set of functions $v\in C^\infty(\R^n)$ that satisfy 
		\begin{linenomath*}
			\[
			\Vert e^{a(\P\la D_x \ran)^{1/\sigma}}v(x) \Vert_{L^2} \leq C
			\]
		\end{linenomath*}
		for some positive constants $a$ and $C$.
	\end{defn}	
	The function space $\mathcal{M}^\sigma_{\Phi,k}(\R^n)$ and its dual ${\mathcal{M}^\sigma_{\Phi,k}}'(\R^n)$ are related to the Sobolev spaces as follows 
	\begin{linenomath*}
		\begin{equation*}%\label{space}
			\mathcal{M}^\sigma_{\Phi,k}(\R^n) =  \bigcup\limits_{\varepsilon>0} \bigcap\limits_{s \in \R^2}H^{s,\varepsilon,\sigma}_{\Phi,k}(\R^n) \quad \text{ and } \quad {\mathcal{M}^\sigma_{\Phi,k}}' (\R^n)=  \bigcap\limits_{\varepsilon>0} \bigcup\limits_{s \in \R^2} H^{s,\varepsilon,\sigma}_{\Phi,k}(\R^n).
		\end{equation*}
	\end{linenomath*}
	%\lim_{\substack{\longrightarrow \\ \varepsilon \to 0}}
	%\lim_{\substack{\longleftarrow \\ \varepsilon \to 0}}
	% s \in \R^2
	
	We can relate these spaces to the Gelfand-Shilov spaces. Let us denote the Gelfand-Shilov space of indices $\mu,\nu>0$ as $\mathcal{S}^\mu_\nu(\R^n)$. We refer to \cite[Section 6.1]{nicRodi} for the definition and properties of Gelfand-Shilov spaces.
	Note that
	\begin{linenomath*}
		\[
		k^{-1/\sigma}(\P\japxin)^{1/\sigma} \leq \frac{1}{2}(\P^{2/\sigma}+\japxi^{2/\sigma}) \leq \frac{1}{2}(\japx^{2/\sigma}+\japxi^{2/\sigma}).
		\]
	\end{linenomath*}	
	Thus, we have the inclusion
	\begin{linenomath*}
		\[
		\mathcal{S}^{\frac{\sigma}{2}}_{\frac{\sigma}{2}}(\R^n) \hookrightarrow \mathcal{M}^\sigma_{\Phi,k}(\R^n).
		\]
	\end{linenomath*}
	Further, if $\P=\japx$ we have 
	\begin{linenomath*}
		\[
		\mathcal{S}^{\frac{\sigma}{2}}_{\frac{\sigma}{2}}(\R^n) \hookrightarrow \mathcal{M}^\sigma_{\P}(\R^n)\hookrightarrow \mathcal{S}^{\sigma}_{\sigma}(\R^n).
		\]
	\end{linenomath*}
	
	In the pseudodifferential calculus (see, Appendix II and III), the transposition, composition and construction of parametrix are done modulo an operator that maps ${\mathcal{M}^\sigma_{\Phi,k}}'(\R^n)$ to $\mathcal{M}^\sigma_{\Phi,k}(\R^n)$. A detailed discussion of $\mathcal{M}^\sigma_{\Phi,k}(\R^n)$ and its dual will be dealt from an abstract viewpoint in \cite{Rahul_NUK}. 
	
	\subsection{Statement of the Main Result}
	Let $e=(1,1),e_1=(1,0)$ and $e_2=(0,1)$. 
	\begin{thm}\label{result2}
		Consider the strictly hyperbolic Cauchy problem  (\ref{eq1}) satisfying the following conditions:
		\begin{enumerate}[label=\roman*)]
			\item The coefficients $a_{j,\alpha}$ of the principal part satisfy (\ref{bound})-(\ref{B-up2}) and the coefficients $ b_{j,\alpha}$ satisfy (\ref{Lower}).
			
			\item The initial data $f_k$ belongs to $H^{s+(m-k)e,\Lambda_1,\sigma}_{\Phi,k},\Lambda_1>0$ for $k=1,\cdots,m$.
			
			\item The right hand side $f \in C([0,T];H^{s,\Lambda_2,\sigma}_{\Phi,k}),\Lambda_2>0$.
		\end{enumerate}			
		Then, there exist a continuous function $\Lambda(t)$ and $\Lambda_0>0$, such that there is a unique solution
		\begin{linenomath*}
			\[
			u \in \bigcap\limits_{j=0}^{m-1}C^{m-1-j}\Big([0,T];H^{s+je,\Lambda(t),\sigma}_{\Phi,k}\Big)
			\]
		\end{linenomath*}
		for $\Lambda(t) < \min\{\Lambda_0,\Lambda_1,\Lambda_2\}$. More specifically, for a sufficiently large $\lambda$ and $\delta \in (0,1)$, we have the a-priori estimate
		\begin{linenomath*}
			\begin{equation}
				\begin{aligned}
					\label{est2}
					\sum_{j=0}^{m-1} \Vert \partial_t^ju(t,\cdot) \Vert_{\Phi,k;s+(m-1-j)e,\Lambda(t),\sigma} \; &\leq C \Bigg(\sum_{j=1}^{m} \Vert f_j\Vert_{\Phi,k;s+(m-j)e,\Lambda(0),\sigma} \\
					& \qquad + \int_{0}^{t}\Vert f(\tau,\cdot)\Vert_{\Phi,k;s,\Lambda(\tau),\sigma}\;d\tau\Bigg)
				\end{aligned}
			\end{equation}
		\end{linenomath*}
		for $0 \leq t \leq T \leq (\delta\Lambda^*/\lambda)^{1/\delta}, \; C=C_s>0$ and $\Lambda(t)=\frac{\lambda}{\delta}(T^\delta-t^\delta)$.				
	\end{thm}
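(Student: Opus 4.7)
The overall strategy is a four-step program: (i) reduction to a first-order pseudodifferential system of size $m$, (ii) conjugation by an infinite-order operator of the form $e^{\Lambda(t)(\Phi\langle D_x\rangle_k)^{1/\sigma}}$ to absorb the singular $t$-behavior, (iii) an energy estimate in the conjugated variables via sharp G\r{a}rding inequality in the twisted metric $\tilde{g}_{\Phi,k}$, and (iv) a Gronwall-type closure yielding the a-priori bound (\ref{est2}), followed by the standard existence/uniqueness argument. First, I would factor the principal symbol $P_m$ using the characteristic roots $\tau_j(t,x,\xi)$ and reduce (\ref{eq1}) to a $m\times m$ pseudodifferential system $\partial_t U = (i\,\mathrm{op}(\mathcal{T}) + R)U + F$, where the diagonal part $\mathcal{T} = \mathrm{diag}(\tau_1,\dots,\tau_m)$ has entries of order $(1,1)$ with respect to $g_{\Phi,k}$ and $R$ collects the remainder terms originating from the $B_{m-j}$'s and from the symbolic calculus. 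The strict hyperbolicity bound (\ref{roots}) ensures that the diagonalizer is invertible and belongs to a good symbol class, so the reduction is carried out modulo smoothing contributions absorbed into $R$.

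Second, I would perform the conjugation $v(t) = e^{\Lambda(t)(\Phi\langle D_x\rangle_k)^{1/\sigma}} U(t)$, which produces an additional term $-\Lambda'(t)\,\mathrm{op}((\Phi\japxin)^{1/\sigma})\,v$ on the left-hand side. By Theorem \ref{conju} from the paper, after conjugation the lower-order remainder terms are governed by the new metric $\tilde{g}_{\Phi,k}$ in (\ref{om2}), and in particular the contribution coming from $\partial_t a_{j,\alpha}$ (which is $O(t^{-q})$) becomes an operator whose symbol is controlled by $t^{-q}(\Phi\japxin)^{2/\sigma}$ modulo lower-order terms. The key algebraic observation is that $q + \frac{\delta-1}{\text{(something)}}$ trade-offs must be arranged so that the singular time factor is dominated by $\Lambda'(t) = -\lambda t^{\delta-1}$ for an appropriate $\delta \in (0,1)$; specifically, since $\sigma < q/(q-1)$, one has $2/\sigma > 2(q-1)/q$, and a direct comparison shows that the dissipative gain $-\Lambda'(t)\,(\Phi\japxin)^{1/\sigma}$ dominates the singular loss provided $\lambda$ is chosen large enough and $T$ small enough.

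Third, I would establish the basic energy identity $\frac{d}{dt}\|v\|^2_{L^2}$ on the conjugated system. The non-self-adjoint part of the symbol $i\,\mathrm{op}(\mathcal{T})$ and the remainder $R$, together with the singular terms produced by $\partial_t$ hitting the exponential weight, must be controlled. Here sharp G\r{a}rding is the main tool: the nonnegative operator constructed from $-\Lambda'(t)\,\mathrm{op}((\Phi\japxin)^{1/\sigma})$ minus the singular remainders should yield a nonnegative-modulo-bounded operator in the calculus associated with $\tilde{g}_{\Phi,k}$. This is exactly where the constraint $\sigma \geq 3$ enters: the sharp G\r{a}rding inequality in the Beals-Fefferman framework with metric $\tilde{g}_{\Phi,k}$ requires the Planck function of $\tilde{g}_{\Phi,k}$ to satisfy appropriate smallness, which in turn forces $\sigma \geq 3$. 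Once sharp G\r{a}rding is applied, one obtains $\frac{d}{dt}\|v\|^2_{L^2} \leq C\|v\|^2_{L^2} + C\|\tilde{F}\|^2_{L^2}$ with $\tilde{F}$ the conjugated right-hand side, and Gronwall's inequality produces the a-priori estimate in $L^2$. Propagating this to all orders via the Sobolev scale $H^{s,\Lambda(t),\sigma}_{\Phi,k}$ uses the commutation properties of $\mathrm{op}(\Phi^{s_2}\japxin^{s_1})$ with the conjugating weight.

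Finally, the a-priori estimate (\ref{est2}) together with the standard Cauchy-Kowalewski-type approximation argument (regularizing the coefficients in $t$, obtaining solutions of the regularized problem, and passing to the limit using the uniform bound in the scale $H^{s,\Lambda(t),\sigma}_{\Phi,k}$) yields existence in the stated space, and uniqueness follows from the dual estimate applied to the adjoint problem. The main obstacle I anticipate is the \emph{bookkeeping of symbol classes across the conjugation}: each of the terms $\partial_t a_{j,\alpha}$, $[a_{j,\alpha}, e^{\Lambda(\Phi\japxin)^{1/\sigma}}]$, and the remainder from the diagonalization has to be analyzed in the calculus of $\tilde{g}_{\Phi,k}$, and verifying that the dissipative term from $\Lambda'(t)$ truly dominates \emph{all} of these, uniformly in $k$ and for the prescribed range $q \in [1,3/2)$, is the delicate point on which the entire scheme rests.
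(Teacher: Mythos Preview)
Your outline captures the broad architecture (reduction to a first-order system, conjugation by $e^{\Lambda(t)(\Phi\langle D_x\rangle_k)^{1/\sigma}}$, sharp G\r{a}rding, Gronwall), but it omits two ingredients that the paper's proof actually hinges on, and without which step (i) and the bookkeeping in step (iii) do not go through.

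First, you propose to factor and diagonalize using the characteristic roots $\tau_j(t,x,\xi)$ themselves. This fails: the factorization produces remainder terms involving $\partial_t\tau_j$, which behave like $t^{-q}$ and are therefore not in $L^1$ near $t=0$ for $q\geq 1$. The paper handles this by \emph{regularizing} the roots, replacing each $\tau_j$ by a mollified version $\lambda_j(t,x,\xi)=\int \tau_j(t-h(x,\xi)s,x,\xi)\rho(s)\,ds$, where the mollification scale is precisely the Planck function $h(x,\xi)=(\Phi(x)\langle\xi\rangle_k)^{-1}$. This produces $\partial_t\lambda_j$ which, while still singular, trades part of the $t$-singularity for growth in $(x,\xi)$, and it is only after this step that one can factor $P$ modulo a controllable remainder.

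Second, and related, your claim that after conjugation the singular contribution is ``controlled by $t^{-q}(\Phi\langle\xi\rangle_k)^{2/\sigma}$'' is not the correct order, and the vague trade-off ``$q+\frac{\delta-1}{(\text{something})}$'' is not enough. The paper introduces a \emph{zone decomposition} of the extended phase space into an interior region $Z_{int}(N)=\{t^q\leq Nh(x,\xi)\}$ and an exterior region $Z_{ext}(N)=\{t^q\geq Nh(x,\xi)\}$, and analyzes the factorization remainder $R_j$ separately in each. In $Z_{int}$ one uses the raw bound $R_j\in AG^{(m-j)e}$ together with $(\Phi\langle\xi\rangle_k)^{\gamma}\leq N^{\gamma}t^{-(1-\delta)}$; in $Z_{ext}$ one uses $t^q R_j\in AG^{(m-1-j)e}$ together with $t^{-q}\leq t^{-(1-\delta)}(\Phi\langle\xi\rangle_k)^{1/\sigma}$. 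The outcome is that $t^{1-\delta}R_j\in C([0,T];AG^{(m-\gamma-j)e}_{\Phi,k;\sigma})$, i.e.\ the singular time factor is exactly $t^{-(1-\delta)}$ and the spatial order is exactly $(\Phi\langle\xi\rangle_k)^{1/\sigma}$, not $2/\sigma$. The relation linking everything is $\frac{1}{\sigma}=\frac{q-1+\delta}{q}$, which fixes $\delta\in(0,1)$ once $\sigma$ and $q$ are given. Only with this precise balance does $-\Lambda'(t)(\Phi\langle D_x\rangle_k)^{1/\sigma}=\frac{\lambda}{t^{1-\delta}}(\Phi\langle D_x\rangle_k)^{1/\sigma}$ dominate the remainder for large $\lambda$, making the sharp G\r{a}rding step work. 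Without the regularized roots and the zone analysis, you have no mechanism to convert the non-integrable $t^{-q}$ into the integrable $t^{-(1-\delta)}$, and the energy inequality cannot close.
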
  	 
	The constants $\Lambda_0$ and $\delta$ in the above theorem are the constants in Theorem \ref{conju} and equation (\ref{delta}), respectively.
	\setcounter{rmk}{1}
	\begin{rmk}
		\begin{enumerate}[label=\roman*)]
			\item The Cauchy problem (\ref{eq1}) can be shown to be well-posed in $C^\infty$ for the case $q=1$ in (\ref{bound})-(\ref{B-up2}), as in \cite{Cico1}. The loss, in such a case, is finite but arbitrary. In fact, it is proportional to $\frac{T^\varepsilon}{\varepsilon}$ for an arbitrary $\varepsilon \in (0,1)$. Theorem \ref{result2} addresses the case in the Gevrey setting, $G^\sigma(\R^n)$, $3 \leq  \sigma < \infty$. Though the loss is infinite in our generalized Sobolev space setting, it is proportional to $\sigma \;t^{\frac{1}{\sigma}}$, as $\delta = 1/\sigma$ in (\ref{delta}) for $q=1$.
			
			\item For the case $q=1$, one can assume $a_{j,\alpha} \in C^1((0,T];C^\infty)$ with a global version of conditions in \cite[Theorem 1]{Cico1}. In such a case the study of global well-posedness involves $t$-dependent metric. We aim to address this case in our future work.
		\end{enumerate}  	 
	\end{rmk}

			\section{Generalized Symbol Classes Related to the Metric $g_{\Phi,k}$ }	\label{Symbol classes}
	In this section, we list the properties of $\Phi$ and define generalized global symbol classes for the pseudodifferential operators using the metrics $g_{\Phi,k}$ and $\tilde{g}_{\Phi,k}$.
	
	\subsection{Structure of $\Phi$ and its Optimal Choice}\label{struct}
	Before we discuss the structure of $\Phi$ in (\ref{om}), let us review some notation and terminology used in the study of metric on the phase space, see \cite[Chapter 2]{Lern} for details. Let us denote by $\omega(X,Y)$ the standard symplectic form on $T^*\R^n\cong \R^{2n}$: if $X=(x,\xi)$ and $Y=(y,\eta)$, then 
	\begin{linenomath*}
		$$
		\omega(X,Y)=\xi \cdot y - \eta \cdot x.
		$$
	\end{linenomath*}
	We can identify $\omega$ with the isomorphism of $\R^{2n}$ to $\R^{2n}$ such that $\omega^*=-\omega$, with the formula $\omega(X,Y)= \langle \omega X,Y\rangle$. Consider a Riemannian metric $g_X$ on $\R^{2n}$ (which is a measurable function of $X$) to which we associate the dual metric $g_X^\omega$ by
	\begin{linenomath*}
		\[
		\forall T \in \R^{2n}, \quad g_X^\omega(T)= \sup_{0 \neq T' \in \R^{2n}} \frac{\langle \omega T,T'\rangle^2}{g_X(T')}.
		\]
	\end{linenomath*}
	Considering $g_X$ as a matrix associated to positive definite quadratic form on $\R^{2n}$, $g_X^\omega=\omega^*g_X^{-1}\omega$.
	We define the Planck function \cite{nicRodi}, that plays a crucial role in the development of pseudodifferential calculus to be
	\begin{linenomath*}
		\[
		h_g(x,\xi) := \inf_{0\neq T \in \R^{2n}} \Bigg(\frac{g_X(T)}{g_X^\omega(T)}\Bigg)^{1/2}.
		\]	
	\end{linenomath*}
	The uncertainty principle is quantified as the upper bound $h_g(x,\xi)\leq 1$. In the following, we often make use of the strong uncertainty principle, that is, for some $\kappa>0$, we have
	\begin{linenomath*}
		\[
		h_g(x,\xi) \leq (1+|x|+|\xi|)^{-\kappa}, \quad (x,\xi)\in \R^{2n}.
		\] 
	\end{linenomath*}
	Basically, a pseudodifferential calculus is the datum of the metric satisfying some local and global conditions. In our case, it amounts to the conditions on $\P$. The symplectic structure and the uncertainty principle also play a natural role in the constraints imposed on $\Phi$. So we consider $\P=\Phi(|x|)$ to be a monotone increasing function of $|x|$ satisfying following conditions: 
	\begin{linenomath*}
		\begin{alignat}{3}
			\label{sl} 
			1 \; \leq & \quad \Phi(x) &&\lesssim  1+|x| && \quad \text{(sub-linear)} \\
			\label{sv}
			\vert x-y \vert \; \leq & \quad r\Phi(y) && \implies C^{-1}\Phi(y)\leq \Phi(x) \leq C \Phi(y)  && \quad \text{(slowly varying)} \\
			\label{tp}
			&\Phi(x+y) && \lesssim  \Phi(x)(1+|y|)^s && \quad \text{(temperate)}	   		 
		\end{alignat}
	\end{linenomath*}  
	for all $x,y\in\R^n$ and for some $r,s,C>0$. Note that $C \geq 1$ in the slowly varying condition with $x=y$. 
	
	For the sake of calculations arising in the development of symbol calculus related to metrics $g_{\Phi,k}$ and $\tilde{g}_{\Phi,k}$, we need to impose following additional conditions:
	\begin{linenomath*}
		\begin{alignat}{3}  	
			\label{sa}
			|\Phi(x) - \Phi(y)| \leq & \Phi(x+y) && \leq \Phi(x) + \Phi(y)  && \quad  (\text{Subadditive})\\  	
			\label{Phi}
			& |\partial_x^\beta \Phi(x)| && \lesssim \Phi(x) \japx^{-|\beta|}, \\
			&\Phi(ax) &&\leq a\P, \text{ if } a>1\\
			\label{scale}
			& a\P &&\leq \Phi(ax), \text{ if } a \in [0,1]
		\end{alignat}   
	\end{linenomath*}	
	where $\beta \in \mathbb{Z}_+^n$. It can be observed that the above conditions are quite natural in the context of symbol classes. The set of all $\Phi$ satisfying the conditions (\ref{sl})-(\ref{scale}) has a lattice structure. This enables one to induce a corresponding lattice structure on the set of all metrics $g_{\Phi,k}$ of the form given in (\ref{om}). We refer the reader to Appendix I for the details.
	
	In our analysis, we will be using two metrics on the phase space. The first one, $g_{\Phi,k}$, is as in $(\ref{om})$. This metric governs the growth rate (in $x$ and $\xi$) of the symbol of the operator in (\ref{eq1}). For this metric, the Planck function is $h(x,\xi)=h_g(x,\xi)=\Phi(x)^{-1}\japxin^{-1} \leq 1$. The second one, $\tilde{g}_{\Phi,k}$,  is as in (\ref{om2}). This metric governs the growth rate of the lower order terms arising after conjugation with infinite order operator, $e^{\Lambda(t)(\P\la D \ran)^{1/\sigma}}$. Here, the Planck function is $\tilde{h}(x,\xi)= h_{\tilde{g}}(x,\xi)=(\P\japxin)^{\frac{1}{\sigma}-\gamma}$. The metric $\tilde{g}$ satisfies the strong uncertainty principle when $\frac{1}{\sigma}-\gamma < 0$ or  $2 < \sigma $.  
	
	We now demonstrate a procedure for an optimal choice for the function $\Phi(x)$ in the metric (\ref{om})  using the second order one-dimensional model problem (\ref{model}) (discussed in Section \ref{intro}) admitting linear growth in $x \in \R^n$, $|x| \to \infty$ and the singular behavior in $t$. Let us consider the function $A(t,x,\xi)=a(t,x)\xi^2$.   
	The strict hyperbolicity condition on the model problem ensures the existence of $\Phi(x)$. We consider $\Phi(x)$ in such a way that
	\begin{linenomath*}
		\begin{equation}\label{ineq2}
			C^{-1}\P^2 \leq \frac{1}{2} \partial_\xi^2 A(t,x,\xi) \leq C \P^2,  
		\end{equation}
	\end{linenomath*}
	\begin{linenomath*}
		\[
		\text{and} \quad |D_x^\beta D_\xi^\alpha A(t,x,\xi)| \leq C_{\alpha\beta} \P^{2-\beta} \japxi^{2-\alpha},\quad \alpha,\beta \in \mathbb{Z}_+
		\]
	\end{linenomath*}
	for some positive constants $C_{\alpha\beta},C>0$. Since $0< \delta_0 \leq a(t,x)$ for some $\delta_0>0$, we have $1 \leq a(t,x)\delta_0^{-1}$. If $\sqrt{a(t,x)\delta_0^{-1}}$ satisfies the properties (\ref{sl})-(\ref{scale}) for each $t\in [0,T]$, then  we take 
	\[
	\P = \inf_{t\in [0,T]} \sqrt{a(t,x)\delta_0^{-1}}.
	\]
	Due to the lattice structure on the class of functions satisfying (\ref{sl})-(\ref{scale}) (as discussed in Appendix I) such a infimum exists within the class. If $\sqrt{a(t,x)\delta_0^{-1}}$ does not satisfy the required properties, we simply take infimum of all $\Phi$ which satisfy (\ref{ineq2}).  
	
	For higher order equations, we consider $\Phi(x)$ such that
	\begin{linenomath*}
		\[
		C^{-1 }\P \leq \sum_{j=0}^{m-1} \sum_{|\alpha|=m-j}\frac{1}{\alpha!}|\partial_\xi^\alpha A_{m-j}(t,x,\xi)| \leq C\P, \quad \text{and}
		\]
	\end{linenomath*}
	\begin{linenomath*}
		\[
		|D_x^\beta D_\xi^\alpha A_{m-j}(t,x,\xi)| \leq C_{\alpha\beta} \P^{m-j-|\beta|} \japxin^{m-j-|\alpha|},\quad \alpha,\beta \in \mathbb{Z}_+^n.
		\]
	\end{linenomath*}
	Further, we need $\Phi$ to govern the growth of coefficients of lower order terms as well i.e.,
	\begin{linenomath*}
		\[
		|D_x^\beta D_\xi^\alpha B_{m-j}(t,x,\xi)| \leq C_{\alpha\beta} \P^{m-j-1-|\beta|} \japxin^{m-j-1-|\alpha|},\quad \alpha,\beta \in \mathbb{Z}_+^n.
		\]
	\end{linenomath*}
	
	Once an optimal choice for $\Phi$ is made we use it to define the global symbol classes needed for our study.

	\subsection{Global Symbol Classes}\label{Symbols}
	
	We now define the global symbol classes associated to the metric $g_{\Phi,k}$ where
	\begin{linenomath*}
		\[
		g_{\Phi,k}= \frac{|dx|^2}{\P^2} + \frac{|d\xi|^2}{\japxin^2}, \quad \text { for } (x,\xi) \in \R^{2n},
		\]
	\end{linenomath*}
	see \cite[Chapters 1 \& 6]{nicRodi}. Let $m=(m_1,m_2)\in \R^2$. 
	
	\begin{defn}
		$G^{m_1,m_2}_{\Phi,k}$ is the space of all functions $p=p(x,\xi) \in C^\infty(\R^{2n})$ satisfying 
		\begin{linenomath*}
			\begin{equation}
				\label{sym1}
				|\partial_\xi^\alpha  D_x^\beta p(x,\xi)| < C_{\alpha \beta} \japxin^{m_1-|\alpha|} \P^{m_2-|\beta|},
			\end{equation}
		\end{linenomath*}	
		for $C_{\alpha \beta}>0$ and for all multi-indices $\alpha$ and $\beta$.	
	\end{defn}
	We need the following symbol classes with Gevrey regularity that will be helpful in the proof of Theorem \ref{result2}.
	Let $\mu$, $\nu$ be real numbers with $\mu \geq 1$, $\nu \geq 1$.
	\begin{defn}
		For every $C>0$, we denote by $AG_{\Phi,k;\mu,\nu}^{m_1,m_2}(C)$ the Banach space of all symbols $p(x,\xi) \in G^{m_1,m_2}_{\Phi,k}$ such that the constant $C_{\alpha \beta}>0$ in (\ref{sym1}) is of the form, 
		\begin{linenomath*}
			\[
			C_{\alpha\beta}= B \ C^{|\alpha|+|\beta|}(\alpha!)^{\mu} (\beta!)^{\nu},
			\]
		\end{linenomath*}
		for some $B>0$ independent of $\alpha$ and $\beta$. 
	\end{defn}
	This space is endowed with the norm defined by the optimal quantity $B$ in the above equation.
	We set 
	\begin{linenomath*}
		\[
		AG_{\Phi,k;\mu,\nu}^{m_1,m_2}=\lim_{\substack{\longrightarrow \\ C \to +\infty}} AG_{\Phi,k;\mu,\nu}^{m_1,m_2}(C)
		\]
	\end{linenomath*}
	with the topology of inductive limit of an increasing sequence of Banach spaces. 
	
	After the conjugation by infinte order pseudodifferential operator, $e^{\Lambda(t)(\P\la D\ran)^{1/\sigma}}$, the growth estimates for the lower order terms are governed by the metric $\tilde{g}_{\Phi,k}$ given in (\ref{om2}). We will now define the symbol classes associated with this metric.
	
	\begin{defn}
		For every $\sigma\geq 3$, we denote by $G_{\Phi,k,\sigma}^{m_1,m_2}$
		the space of all functions $p=p(x,\xi) \in C^\infty(\R^{2n})$ satisfying 
		\begin{linenomath*}
			\begin{equation}
				\label{sym3}
				|\partial_\xi^\alpha  D_x^\beta p(x,\xi)| < C_{\alpha \beta} \japxin^{m_1-\gamma|\alpha|+|\beta|/\sigma} \Phi(x)^{m_2-\gamma|\beta|+|\alpha|/\sigma},
			\end{equation}	
		\end{linenomath*}
		for $C_{\alpha \beta}>0$, $\gamma = 1 - \frac{1}{\sigma}$ and for all multi-indices $\alpha$ and $\beta$.	
	\end{defn}	
	Morever, we shall need the following Gevrey variant
	of the above symbol class.
	
	\begin{defn}
		We denote by $AG_{\Phi,k,\sigma;\mu,\nu}^{m_1,m_2}$ the Banach space of all symbols $p(x,\xi) \in G_{\Phi,\sigma}^{m_1,m_2}$ such that the constant $C_{\alpha \beta}>0$ in (\ref{sym3}) is of the form, 
		\begin{linenomath*}
			\[
			C_{\alpha,\beta}= B \  C^{|\alpha|+|\beta|}(\alpha!)^{\mu} (\beta!)^{\nu},
			\]
		\end{linenomath*}
		for some $C>0$ and $B>0$ independent of $\alpha$ and $\beta$. 
	\end{defn}
	
	Inspired from \cite{AscaCappi2}, we introduce the following symbol class in order to deal with the symbols which are polynomial in $\xi$ and Gevrey of order $\sigma \geq 3$ with respect to $x$. Here we impose analytic estimates with respect to $\xi$ on an exterior domain of $\R^{2n}$. A suitable class for our purpose is defined as follows.
	\begin{defn}
		We shall denote by $AG^{m_1,m_2}_{\Phi,k;\sigma}$ the space of all symbols $p(x,\xi) \in AG_{\Phi,\sigma;\sigma,\sigma}^{m_1,m_2}$ satisfying the following condition: there exist positive constants $B$, $C$ such that
		\begin{linenomath*}
			\[
			\begin{aligned}
				\sup_{\alpha,\beta \in \mathbb{N}^n} \sup_{\P\japxin\geq B|\alpha|^\sigma} C^{-|\alpha|-|\beta|} &(\alpha!)^{-1} (\beta!)^{-\sigma} \japxin^{-m_1+|\alpha|} \\ & \times\Phi(x)^{-m_2+|\beta|} |D_\xi^\alpha \partial_x^\beta p(x,\xi)| < + \infty.
			\end{aligned}			
			\]	
		\end{linenomath*}
	\end{defn}	
	The following inclusions obviously hold:
	\begin{linenomath*}
		\[
		AG_{\Phi,k;1,\sigma}^{m_1,m_2} \subset AG^{m_1,m_2}_{\Phi,k;\sigma} \subset AG_{\Phi,k;\sigma,\sigma}^{m_1,m_2} \subset AG_{\Phi,k,\sigma;\sigma,\sigma}^{m_1,m_2}.
		\]
	\end{linenomath*}
	Given a symbol $p \in AG^{m_1,m_2}_{\Phi,k,\sigma;\sigma,\sigma}$, we can consider the associated pseudodifferential operator $P=p(x,D_x)$ defined by the following oscillatory integral
	\begin{linenomath*}
		\begin{align}
			Pu(x)& =\iint\limits_{\R^{2n}}e^{i(x-y)\cdot\xi}p(x,\xi){u}(y)dy\textit{\dj}\xi \nonumber\\
			\label{pdo}
			& = \int\limits_{\R^n}e^{ix\cdot\xi}p(x,\xi)\hat{u}(\xi) \textit{\dj}\xi,
		\end{align}
	\end{linenomath*}
	where $u \in \mathcal{S}(\R^n)$ and $\textit{\dj}\xi = (2 \pi)^{-n}d\xi$. We shall denote by $OPAG^{m_1,m_2}_{\Phi,k,\sigma;\sigma,\sigma}$, the space of all operators of the form (\ref{pdo}) defined by a symbol $p \in AG^{m_1,m_2}_{\Phi,k,\sigma;\sigma,\sigma}$. Operators from $OPAG^{m_1,m_2}_{\Phi,k,\sigma;\sigma,\sigma}$ map continuously $\mathcal{S}^\theta_\theta(\R^n)$ into $\mathcal{S}^\theta_\theta(\R^n)$, ${\mathcal{S}^\theta_\theta}'(\R^n)$ into ${\mathcal{S}^\theta_\theta}'(\R^n)$, for $\theta \geq \sigma$, (see Appendix II of this work and \cite[Section 6.3]{nicRodi}).
	
	As far as the calculi of the operators in $OPAG^{m_1,m_2}_{\Phi,k;\sigma}$ and $OPAG^{m_1,m_2}_{\Phi,k,\sigma;\sigma,\sigma}$ are concerned one can easily construct them by using the properties of $\P$, see Appendix II and III below for details and see also \cite{nicRodi}.  We follow the similar standard arguments given in \cite[Section 6.3]{nicRodi} and \cite[Appendix A]{AscaCappi2} for the calculus of operators in $OPAG^{m_1,m_2}_{\Phi,k,\sigma;\sigma,\sigma}$ and $OPAG^{m_1,m_2}_{\Phi,k;\sigma}$, respectively.

			\section{Conjugation by an Infinite Order Operator}\label{Conj}
	In the global setting, solutions experience both loss of derivatives and loss of decay. In order to overcome the difficulty of tracking a precise loss in our context we introduce a class of parameter dependent infinite order pseudodifferential operators of the form $e^{\Lambda(t) (\P\la D \ran)^{1/\sigma}}$, defined using the Planck function $\h$ and a continuous function $\Lambda(t)$. These operators compensate, microlocally, the loss of regularity of solutions. The operators with loss of regularity (both derivatives and decay) are transformed to \enquote{good} operators by conjugation with such infinite order operator. This helps us to derive \enquote{good} a priori estimates of solutions in the Sobolev space associated with the operator $e^{\Lambda(t) (\P\la D \ran)^{1/\sigma}}$. 
	
	In this section, we perform a conjugation of operators from $OPAG^{m_1,m_2}_{\Phi,k;\sigma,\sigma}$ by $\exp\{\Lambda(t) (\P\la D \ran)^{1/\sigma}\},\sigma \geq 3$. Here we assume that $\Lambda(t)$ is a  continuous function for $t \in [0,T]$. The following proposition gives  an upper bound on the function $\Lambda(t)$ for the conjugation to be well defined.
	\begin{thm}\label{conju}
		Let $p \in AG^{m_1,m_2}_{\Phi,k;\sigma,\sigma}(C)$ for some $\sigma \geq 3$, $C>0$, $m = (m_1,m_2) \in \R^2$ and $\Lambda = \Lambda(t)$ be a continuous function of $t \in[0,T]$. Then, there exists $\Lambda_0>0$ depending only on $C$ such that for $\Lambda(t) > 0$ with  $\Lambda(t)<\Lambda_0$,
		\begin{linenomath*}
			\begin{align}
				\exp{(\Lambda(t) (\P\la D \ran)^{1/\sigma})}&p(x,D)\exp{(-\Lambda(t) (\P\la D \ran)^{1/\sigma})} \nonumber\\
				\label{remconj}
				& = p(x,D)+ \sum_{k=1}^{3} r_{\Lambda}^{(k)}(t,x,D_x),
			\end{align}
		\end{linenomath*}
		where $r_{\Lambda}^{(k)}(t,x,D_x)$ for $k=1,2,3$ are operators with the symbols in\\ $C([0,T];AG^{-\infty,m_2-\gamma}_{\Phi,k,\sigma;\sigma,\sigma})$, $C([0,T];AG^{m_1-\gamma,-\infty}_{\Phi,k,\sigma;\sigma,\sigma})$ and  $C([0,T];AG^{-\infty,-\infty}_{\Phi,k,\sigma;\sigma,\sigma})$, respectively, for $\gamma=1 - \frac{1}{\sigma}$.
	\end{thm}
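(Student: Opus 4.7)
The plan is to view the conjugation as a triple symbol composition, with the infinite order factor $e^{\pm\Lambda(t)(\P\la D \ra)^{1/\sigma}}$ treated via its parameter-dependent symbol. First I would set $\lambda(x,\xi) = (\P\japxin)^{1/\sigma}$ and $E_\pm(x,\xi)=e^{\pm\Lambda(t)\lambda(x,\xi)}$, and establish by a Faà di Bruno computation the basic estimate
\[
|\partial_\xi^\alpha D_x^\beta E_\pm(x,\xi)| \le B\,C_0^{|\alpha|+|\beta|}(\alpha!\beta!)^\sigma E_\pm(x,\xi)\,\japxin^{-\gamma|\alpha|+|\beta|/\sigma}\P^{-\gamma|\beta|+|\alpha|/\sigma},
\]
where $C_0$ depends on $\Lambda(t)$ and on the constants in (\ref{sl})--(\ref{scale}). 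This places $E_\pm$ in the infinite order Gevrey-weighted calculus adapted to $\tilde{g}_{\Phi,k}$; the uncertainty constraint $\sigma>2$ enters here since only then is $\gamma>1/\sigma$, and the smallness of $\Lambda$ is used to make the Gevrey constants summable after absorption into the exponential factor.

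Next, I would compute the symbol $q(t,x,\xi)$ of the conjugated operator as the oscillatory integral
\[
q(t,x,\xi)=\iint e^{-iy\cdot\eta}\,e^{\Lambda[\lambda(x,\xi+\eta)-\lambda(x+y,\xi)]}\,p(x+y,\xi+\eta)\,dy\,\textit{\dj}\eta,
\]
obtained by exploiting the pointwise identity $E_\Lambda E_{-\Lambda}\equiv 1$ to pull both exponentials into a single phase difference. A Taylor expansion of this exponent around $(y,\eta)=0$ separates the phase into three pieces: a purely $\eta$-dependent piece, a purely $y$-dependent piece, and a mixed one. Correspondingly, the standard asymptotic expansion of the symbol composition produces $p(x,\xi)$ at zeroth order, with the remaining contributions grouped by whether derivatives fall on $E_\Lambda$ (in $\xi$), on $E_{-\Lambda}$ (in $x$), or on both.

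The three remainders then emerge naturally from this grouping. The piece $r^{(1)}_\Lambda$, arising from $\tfrac{1}{\alpha!}\partial_\xi^\alpha E_\Lambda\cdot D_x^\alpha p\cdot E_{-\Lambda}$ with $|\alpha|\ge 1$, is rewritten via $N$-fold integration by parts in $\eta$ and the estimate from the first paragraph to produce arbitrary decay in $\japxin$, placing it in $C([0,T];AG^{-\infty,m_2-\gamma}_{\Phi,k,\sigma;\sigma,\sigma})$; the gain $-\gamma$ in the second index reflects the single $\partial_\xi$ that pulls down a factor $\P^{1/\sigma}\japxin^{-\gamma}$ from $E_\Lambda$. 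Symmetrically, $r^{(2)}_\Lambda$ comes from $x$-derivatives of $E_{-\Lambda}$ with integration by parts in $y$, and $r^{(3)}_\Lambda$ from the mixed terms handled by integration by parts in both variables. I expect the main obstacle to be the combinatorial book-keeping: the Faà di Bruno expansion of $\partial_\xi^\alpha D_x^\beta E_\Lambda$ yields a sum over partitions with factors $(\Lambda\lambda)^k$ that must be absorbed into $E_\Lambda$ before the cancellation with $E_{-\Lambda}$ can be carried out, and it is precisely the smallness condition $\Lambda(t)<\Lambda_0(C)$ which makes all of these Gevrey-type geometric series converge uniformly in $t\in[0,T]$, yielding both the stated membership in the three symbol classes and the continuity in $t$.
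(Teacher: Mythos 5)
Your overall strategy --- representing the conjugated symbol as an oscillatory integral, Taylor-expanding the exponential arguments to split off $p(x,\xi)$ plus three remainders, and controlling each by repeated integration by parts aided by the smallness of $\Lambda$ --- is the paper's strategy, and your preliminary derivative estimate on $E_\pm$ matches the paper's Lemma~4.0.2. Two concrete steps, however, do not go through as written. First, the reduction of the triple composition $E_\Lambda\# p\# E_{-\Lambda}$ to a \emph{double} oscillatory integral $\iint e^{-iy\cdot\eta}e^{\Lambda[\lambda(x,\xi+\eta)-\lambda(x+y,\xi)]}p(x+y,\xi+\eta)\,dy\,d\eta$ is unjustified: the symbols $e^{\pm\Lambda(\P\japxin)^{1/\sigma}}$ depend on both $x$ and $\xi$, so each Kohn--Nirenberg composition introduces its own pair of shift variables, giving a \emph{quadruple} integral (as in the paper's formula~(\ref{conj})) in which the two exponentials are evaluated at $(x,\xi+\zeta+\eta)$ and $(x+y,\xi)$ while $p$ sits at $(x+z,\xi+\eta)$. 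The pointwise identity $E_\Lambda E_{-\Lambda}\equiv 1$ cannot collapse the shifts, because after composition the two exponentials are never at coincident arguments; the Taylor expansion exists precisely to exploit that mismatch, and your formula hides it.

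Second, you have reversed the mechanism that produces the two remainder classes. Differentiating $E_\Lambda$ in $\xi$ brings out a factor of size $\P^{1/\sigma}\japxin^{-\gamma}$, which shifts the \emph{first} index (the $\japxin$-exponent $m_1$) down by $\gamma$, not the second; and what survives in the exponent after cancelling the leading parts is $\Lambda\P^{1/\sigma}\big(\la\xi+\eta+\zeta\ran^{1/\sigma}-\japxin^{1/\sigma}\big)$, which retains $\P^{1/\sigma}$ and --- once absorbed by the integration-by-parts gain for $\Lambda<\Lambda_0$ --- yields rapid decay in $\Phi(x)$. So the piece you label $r^{(1)}_\Lambda$ in fact lies in $AG^{m_1-\gamma,-\infty}_{\Phi,k,\sigma;\sigma,\sigma}$ (the paper's $r^{(2)}_\Lambda$), while the $x$-Taylor remainder of $E_{-\Lambda}$, whose surviving exponent retains $\japxin^{1/\sigma}$, is the one that decays rapidly in $\japxin$ and lands in $AG^{-\infty,m_2-\gamma}_{\Phi,k,\sigma;\sigma,\sigma}$. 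Your sentence ``the gain $-\gamma$ in the second index reflects the single $\partial_\xi$ that pulls down a factor $\P^{1/\sigma}\japxin^{-\gamma}$'' is internally inconsistent --- that factor changes the first index --- and this swap must be untangled for the estimates to close.
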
  
	
	Before we prove the theorem, let us see its implications to partial differential operator $p(x,D) \in OPAG^{m_1,m_2}_{\Phi,k;\sigma,\sigma} $ of the form, 
	\begin{linenomath*}
		\[
		p(x,D)= \sum\limits_{\substack{|\alpha|\leq m_1 \\ |\beta| \leq m_2}} C_{\alpha\beta} a_\beta(x)D^\alpha
		\]
	\end{linenomath*}
	where $(m_1,m_2) \in \mathbb{Z}_+^2$ and for each $j \leq m_2$, $\sum\limits_{|\beta|=j}|a_\beta(x)| \sim \P^{|\beta|}$. Let us decompose $p(x,\xi)$ as:
	\begin{linenomath*}
		\[
		p(x,\xi) = \sigma^{(m_1)}(p)(x,\xi) + \sigma_{(m_2)}(p)(x,\xi) - \sigma^{(m_1)}_{(m_2)}(p)(x,\xi) + p_l(x,\xi)
		\]
	\end{linenomath*}
	where
	\begin{linenomath*}
		\[
		\sigma^{(m_1)}_{(m_2)}(p)(x,\xi)=  \sum\limits_{\substack{|\alpha|= m_1 \\ |\beta| = m_2}} C_{\alpha\beta} a_\beta(x)\xi^\alpha, \quad \sigma^{(m_1)}(p)(x,\xi)=  \sum\limits_{\substack{|\alpha|= m_1 \\ |\beta| \leq m_2}} C_{\alpha\beta} a_\beta(x)\xi^\alpha, \quad
		\]
	\end{linenomath*}
	\begin{linenomath*}
		\[
		\sigma_{(m_2)}(p)(x,\xi)=  \sum\limits_{\substack{|\alpha| \leq m_1 \\ |\beta| = m_2}} C_{\alpha\beta} a_\beta(x)\xi^\alpha, \quad p_{l}(x,\xi)=  \sum\limits_{\substack{|\alpha|< m_1 \\ |\beta| < m_2}} C_{\alpha\beta} a_\beta(x)\xi^\alpha.
		\]
	\end{linenomath*}
	Here $p_l(x,\xi)$ contains the lower order terms with respect to both $x$ and $\xi$. Note that $\sigma^{(m_1)}(p), \sigma_{(m_2)}(p)$ and $\sigma^{(m_1)}_{(m_2)}(p)$ are \enquote{similar} to interior, exit and bi-homogeneous principal parts of a classical symbol, respectively, see \cite[Section 3.2]{nicRodi}.
	From (\ref{remconj}), it is clear that the metric governing the growth estimates of the lower order terms changes
	after the conjugation by combining the lower order terms with the remainder. That is, we have 
	\begin{linenomath*} 
		\begin{align*}
			&\exp{(\Lambda(t) (\P\la D \ran)^{1/\sigma})}  p(x,D)\exp{(-\Lambda(t) (\P\la D \ran)^{1/\sigma})} \\
			& \quad =\sigma^{(m_1)}(p)(x,D) + \sigma_{(m_2)}(p)(x,D) - \sigma^{(m_1)}_{(m_2)}(p)(x,D)+ \sum_{k=1}^{3} p_{l,\Lambda}^{(k)}(t,x,D_x)
		\end{align*}
	\end{linenomath*}
	where $p_{l,\Lambda}^{(k)}(t,x,D_x)$ are operators with the symbols in $C([0,T];AG^{m_1-1,m_2-\gamma}_{\Phi,k,\sigma;\sigma,\sigma})$, $C([0,T];AG^{m_1-\gamma,m_2-1}_{\Phi,k,\sigma;\sigma,\sigma})$ and  $C([0,T];AG^{-\infty,-\infty}_{\Phi,k,\sigma;\sigma,\sigma})$, for $k=1,2,3,$ respectively.
	
	To prove the Theorem \ref{conju}, we need the following lemma, which can be given an inductive proof.
	\begin{lem}
		Let $\varepsilon \neq 0$, $\sigma \geq 3$. Then, for every $\alpha,\beta \in \mathbb{Z}^n_+$, we have
		\begin{linenomath*}
			\[
			\partial_x^\beta\partial_\xi^\alpha e^{\varepsilon (\P\japxin)^{1/\sigma}} \leq (C\varepsilon)^{|\alpha|+|\beta|}  \alpha! \beta! \P^{-\gamma|\beta|+|\alpha|/\sigma} \japxin^{-\gamma|\alpha|+|\beta|/\sigma} e^{\varepsilon (\P\japxin)^{1/\sigma}}.
			\]
		\end{linenomath*}
	\end{lem}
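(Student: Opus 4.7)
Set $\phi(x,\xi) := (\Phi(x)\langle\xi\rangle_k)^{1/\sigma}$ and $F := e^{\varepsilon\phi}$. The plan is to proceed by induction on $N=|\alpha|+|\beta|$, using the fact that $\partial_x^\beta\partial_\xi^\alpha F = P_{\alpha,\beta}(x,\xi)\,F$ for a polynomial expression $P_{\alpha,\beta}$ in the derivatives of $\phi$. A decisive structural simplification is that $\phi = \Phi(x)^{1/\sigma}\langle\xi\rangle_k^{1/\sigma}$ factorises as a function of $x$ alone times a function of $\xi$ alone, so $\partial_x^\beta\partial_\xi^\alpha\phi = 0$ whenever both $|\alpha|\geq 1$ and $|\beta|\geq 1$; only pure-$x$ and pure-$\xi$ derivatives of $\phi$ ever appear.

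The preliminary step is to establish the pointwise bound
\begin{equation*}
|\partial_x^\beta\partial_\xi^\alpha\phi(x,\xi)| \leq C^{|\alpha|+|\beta|}\alpha!\beta!\,\Phi(x)^{1/\sigma-|\beta|}\langle\xi\rangle_k^{1/\sigma-|\alpha|}\quad (|\alpha|+|\beta|\geq 1).
\end{equation*}
By the factorisation this reduces to two independent one-variable estimates. The $\xi$-estimate $|\partial_\xi^\alpha\langle\xi\rangle_k^{1/\sigma}|\leq C^{|\alpha|}\alpha!\langle\xi\rangle_k^{1/\sigma-|\alpha|}$ follows from analyticity of $\langle\xi\rangle_k^{1/\sigma}$, noting that $\langle\xi\rangle_k\geq k$ keeps us uniformly away from the branch singularity. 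The $x$-estimate $|\partial_x^\beta\Phi^{1/\sigma}|\leq C^{|\beta|}\beta!\,\Phi^{1/\sigma-|\beta|}$ follows from Fa\`a di Bruno applied to the composition of $t\mapsto t^{1/\sigma}$ with $\Phi$, using (\ref{Phi}) together with the inequality $\langle x\rangle^{-|\beta|}\leq \Phi(x)^{-|\beta|}$ (a consequence of (\ref{sl})).

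For the inductive step, differentiating $P_{\alpha,\beta}F$ gives the recursion
\begin{equation*}
P_{\alpha,\beta+e_j} = \partial_{x_j}P_{\alpha,\beta} + \varepsilon\,(\partial_{x_j}\phi)\,P_{\alpha,\beta},
\end{equation*}
together with its $\xi$-analogue. Unfolding, $P_{\alpha,\beta}$ is a sum of monomials, each a product of derivatives $\partial_x^{\beta^{(i)}}\partial_\xi^{\alpha^{(i)}}\phi$ whose orders partition $(\alpha,\beta)$, multiplied by $\varepsilon^r$ with $1\leq r\leq N$. Substituting the bound from the previous paragraph, each monomial is controlled by $(C\varepsilon)^r \prod_i \alpha^{(i)}!\beta^{(i)}!\cdot \Phi^{r/\sigma-|\beta|}\langle\xi\rangle_k^{r/\sigma-|\alpha|}$. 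Since $\Phi,\langle\xi\rangle_k\geq 1$ and $r\leq N$, the identity $N/\sigma-|\beta| = -\gamma|\beta|+|\alpha|/\sigma$ shows that the $\Phi$- and $\langle\xi\rangle_k$-exponents are dominated by $-\gamma|\beta|+|\alpha|/\sigma$ and $-\gamma|\alpha|+|\beta|/\sigma$ respectively. The multinomial inequality $\prod_i\alpha^{(i)}!\beta^{(i)}!\leq \alpha!\beta!$ and an elementary count of the number of monomials (at most $C^N$) close the estimate.

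The main obstacle is the combinatorial bookkeeping: verifying that the recursion does not inflate the number of monomials beyond $C^N$ and that the factorial content stays within $\alpha!\beta!$ rather than degenerating to $(\alpha+\beta)!$. This is handled by iterated application of the identity $\binom{\alpha+\alpha'}{\alpha}\alpha!\alpha'!=(\alpha+\alpha')!$; when $\partial_{x_j}$ either raises the order of an existing factor or spawns a new $\varepsilon\,\partial_{x_j}\phi$ factor, the linear factor $|\beta|+1$ is absorbed into $C^{|\beta|+1}(\beta+e_j)!$ without accumulation. Iterating this balance, the constants coalesce into $(C\varepsilon)^{|\alpha|+|\beta|}\alpha!\beta!$, matching the statement precisely.
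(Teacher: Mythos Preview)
Your approach matches what the paper indicates: it merely states the lemma ``can be given an inductive proof'' and supplies no details, so the Fa\`a di Bruno/recursion strategy you outline is exactly what is intended.

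There are, however, two slips worth flagging. First, the claim that $\partial_x^\beta\partial_\xi^\alpha\phi=0$ whenever $|\alpha|\geq 1$ and $|\beta|\geq 1$ is false: for a \emph{product} $\phi=\psi_1(x)\psi_2(\xi)$ one has $\partial_x^\beta\partial_\xi^\alpha\phi=(\partial_x^\beta\psi_1)(\partial_\xi^\alpha\psi_2)$, which is generically nonzero. (You may be thinking of a sum $\psi_1+\psi_2$.) Fortunately this does not damage the argument, since the factorisation still gives the correct bound $|\partial_x^\beta\partial_\xi^\alpha\phi|\leq C^{|\alpha|+|\beta|}\alpha!\beta!\,\Phi^{1/\sigma-|\beta|}\langle\xi\rangle_k^{1/\sigma-|\alpha|}$ by multiplying the two one-variable estimates.

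Second, and more substantively, the assertion that ``the number of monomials is at most $C^N$'' is not true: the set-partition count is the Bell number $B_N$, which grows super-exponentially. The right way to close the combinatorics is to use the \emph{multinomial} form of Fa\`a di Bruno,
\[
e^{-\varepsilon\phi}\,\partial_x^\beta\partial_\xi^\alpha e^{\varepsilon\phi}
=\alpha!\beta!\sum_{r=1}^{N}\frac{\varepsilon^r}{r!}\sum_{\substack{\sum\gamma^{(i)}=(\alpha,\beta)\\ |\gamma^{(i)}|\geq 1}}\prod_{i=1}^{r}\frac{\partial^{\gamma^{(i)}}\phi}{\gamma^{(i)}!},
\]
where the inner sum is over \emph{ordered} compositions. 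After inserting the bound on $\partial^{\gamma^{(i)}}\phi$, the factorials cancel completely, leaving only the count of ordered compositions of a multi-index in $\mathbb{Z}_+^{2n}$ into $r$ parts; this is bounded by $\prod_j\binom{\gamma_j+r-1}{r-1}\leq 2^{N}(2^{2n})^{r}$, and $\sum_r(2^{2n}|\varepsilon|)^r/r!\leq e^{2^{2n}|\varepsilon|}$ is a constant. With this correction your argument goes through and yields the stated estimate (with the constant absorbing the $\varepsilon$-dependent factor).
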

	
	\begin{proof}[\bfseries{Proof of Theorem~\ref{conju}}]
		Throughout this proof we write $\Lambda$ in place of $\Lambda(t)$ and denote $\P\japxin$ by $\psi(x,\xi)$ for the sake of simplicity of notation. 
		Let $p_{\Lambda,\sigma}(x,\xi)$ be the symbol of the operator
		\begin{linenomath*}
			\[
			\exp{(\Lambda (\P\la D \ran)^{1/\sigma})}p(x,D)\exp{(-\Lambda (\P\la D \ran)^{1/\sigma})}.
			\] 
		\end{linenomath*}
		Then  $p_{\Lambda,\sigma}(x,\xi)$ can be written in the form of an oscillatory integral as follows:
		\begin{linenomath*}
			\begin{align}\label{conj}
				p_{\Lambda,\sigma}(x,\xi)= \int \dots \int e^{-iy\cdot\eta}e^{-iz\cdot\zeta}&e^{\Lambda \psi(x,\xi+\zeta+\eta)^{1/\sigma}} p(x+z,\xi+\eta) \\
				& \times e^{-\Lambda \psi(x+y,\xi)^{1/\sigma}}dz\textit{\dj}\zeta dy\textit{\dj}\eta, \nonumber
			\end{align}
		\end{linenomath*}
		Taylor expansions of $\exp\{\Lambda \psi(x,\xi)^{1/\sigma}\}$ in the first and second variables, respectively, are
		\begin{linenomath*} 
			\begin{align*}
				e^{-\Lambda \psi(x+y,\xi)^{1/\sigma}} &= e^{-\Lambda \psi(x,\xi)^{1/\sigma}} + \sum_{j=1}^{n} \int_{0}^{1}y_j \partial_{w_j'}e^{-\Lambda \psi(w',\xi)^{1/\sigma}} \Big|_{w'=x+\theta_1y}d\theta_1, \text{ and }\\
				e^{\Lambda \psi(x,\xi+\zeta+\eta)^{1/\sigma}} &= e^{\Lambda \psi(x,\xi)^{1/\sigma}} + \sum_{i=1}^{n} \int_{0}^{1}(\zeta_i + \eta_i) \partial_{w_i}e^{\Lambda \psi(x,w)^{1/\sigma}} \Big|_{w=\xi+\theta_2(\eta+\zeta)}d\theta_2.		
			\end{align*}
		\end{linenomath*}
		We can write $p_{\Lambda,\sigma}$ as 
		\begin{linenomath*}
			\[
			p_{\Lambda,\sigma}(x,\xi)=p(x,\xi) + \sum_{l=1}^{3} r_{\Lambda}^{(l)}(t,x,\xi)\quad \text{ where }
			\]	
		\end{linenomath*}
		\begin{linenomath*}	 
			\[
			r_{\Lambda}^{(l)}(x,\xi) = \int \cdots \int e^{-iy\cdot\eta}e^{-iz\cdot\zeta} I_l p(x+z,\xi+\eta) dz\textit{\dj}\zeta dy\textit{\dj}\eta,
			\]	
		\end{linenomath*}	
		and $I_l$, $l=1,2,3$ are as follows:
		\begin{linenomath*}
			\begin{align*}
				I_1 &= e^{\Lambda \psi(x,\xi)^{1/\sigma}} \sum_{j=1}^{n} \int_{0}^{1}y_j \partial_{w_j'}e^{-\Lambda \psi(w',\xi)^{1/\sigma}} \Big|_{w'=x+\theta_1y}d\theta_1,\\
				I_2 &=  e^{-\Lambda \psi(x,\xi)^{1/\sigma}}  \sum_{i=1}^{n} \int_{0}^{1}(\zeta_i+ \eta_i) \partial_{w_i}e^{\Lambda \psi(x,w)^{1/\sigma}} \Big|_{w=\xi+\theta_2(\zeta + \eta)}d\theta_2 \quad \text{ and }\\
				I_3 &= 
				\Bigg(\sum_{i=1}^{n} \int_{0}^{1}(\zeta_i+\eta_i) \partial_{w_i}e^{\Lambda \psi(x,w)^{1/\sigma}} \Big|_{w=\xi+\theta_2(\zeta+\eta)}d\theta_2\Bigg) \\
				&\quad \times\Bigg(\sum_{j=1}^{n} \int_{0}^{1}y_j \partial_{w'_j}e^{-\Lambda \psi(w',\xi)^{1/\sigma}}\Big|_{w'=x+\theta_1y}d\theta_1\Bigg).
			\end{align*}
		\end{linenomath*}		
		We will now determine the growth estimate for $r^{(1)}_\Lambda(x,\xi)$ using integration by parts. For $\alpha, \beta,\kappa \in \mathbb{Z}_+^n$ and $l \in \mathbb{Z}_+$ we have	
		\begin{linenomath*}
			\[
			\begin{aligned}
				&\partial_\xi^\alpha\partial_x^\beta r^{(1)}_\Lambda(t,x,\xi)\\
				&= \sum_{j=1}^{n} \sum_{\tiny{\beta'+ \beta'' \leq \beta}}\sum_{\scriptsize{\alpha'+\alpha'' \leq \alpha}} \int \dots \int y^{-\kappa} y^{\kappa} e^{-iy\cdot\eta}e^{-iz\cdot\zeta} 
				(\partial_\xi^{\alpha'} \partial_x^{\beta'} D_{\xi_j}p)(x+z,\xi+\eta)\\	
				&\quad\times \int_{0}^{1} \partial_\xi^{\alpha''}\partial_x^{\beta''}\partial_{w'_j}	e^{\Lambda (\P^{1/\sigma}- \Phi(w')^{1/\sigma})\japxin^{1/\sigma}} 	\Big|_{w'=x+\theta_1y} d\theta_1 dz\textit{\dj}\zeta dy\textit{\dj}\eta.
			\end{aligned}
			\]
		\end{linenomath*}		
		\begin{linenomath*}
			\[
			\begin{aligned}
				&\partial_\xi^\alpha\partial_x^\beta r^{(1)}_\Lambda(t,x,\xi)\\
				&= \sum_{j=1}^{n} \sum_{\tiny{\beta'+ \beta'' \leq \beta}}\sum_{\scriptsize{\alpha'+\alpha'' \leq \alpha}} \int \dots \int y^{-\kappa} e^{-iy\cdot\eta}e^{-iz\cdot\zeta} \la \eta \ra_N ^{-2l} \la z \ra_N ^{-2l}\la D_\zeta\ra_N^{2l} \la \zeta \ra_N^{-2l}  \\
				&\quad\times \la D_z\ra_N^{2l} D_\eta^\kappa (\partial_\xi^{\alpha'} \partial_x^{\beta'} D_{\xi_j}p)(x+z,\xi+\eta)\\	
				&\quad\times \int_{0}^{1} \la D_y \ra_N^{2l} \partial_\xi^{\alpha''}\partial_x^{\beta''}\partial_{w'_j}	e^{\Lambda (\P^{1/\sigma}- \Phi(w')^{1/\sigma})\japxin^{1/\sigma}} 	\Big|_{w'=x+\theta_1y} d\theta_1 dz\textit{\dj}\zeta dy\textit{\dj}\eta.
			\end{aligned}
			\]
		\end{linenomath*}
		Let $E_1(t,x,y,\xi) = \exp\{\Lambda (\Phi(x)^{1/\sigma}- \Phi(x+\theta_1y)^{1/\sigma}) \japxin^{1/\sigma}\}$.
		Note that for $|y| \geq 1$ we have $\la y \ra \leq \sqrt{2}|y|$ and in the case $|y| <1$ we have $\la y\ra < \sqrt{2}$. Using these estimates along with the fact that $\la y \ra^{-|\kappa|} \leq \Phi(y)^{-|\kappa|}$ we have
		\begin{linenomath*}
			\[
			\begin{aligned}
				|\partial_\xi^\alpha\partial_x^\beta r^{(1)}_\Lambda(t,x,\xi)|&\leq  C_1^{|\alpha|+|\beta|+2} \P^{m_2-\gamma-\gamma|\beta|+|\alpha|/\sigma} \japxin^{m_1-\gamma-\gamma|\alpha|+|\beta|/\sigma}    \\
				& \quad \times \sum_{\tiny{\beta'+ \beta'' \leq \beta}}\sum_{\scriptsize{\alpha'+\alpha'' \leq \alpha}}  \int \dots \int \Phi(z)^{|m_2-|\beta'||} \Phi(y)^{\gamma|\beta''|+|\alpha''|/\sigma}  \\ 
				& \quad \times \la \eta \ran^{|m_1-1-|\alpha'||+\gamma|\alpha''|+|\beta''|/\sigma + |\kappa| -2l}  |\kappa|!^\sigma\bigg(\frac{C_1 2^\sigma}{\Phi(y)\japxin}\bigg)^{|\kappa|} \\
				& \quad \times \japxin^{2l/\sigma} E_1(t,x,y,\xi)\la z \ran^{-2l} \la \zeta \ran^{-2l}dz\textit{\dj}\zeta dy\textit{\dj}\eta.
			\end{aligned}
			\]
		\end{linenomath*}
		Given $\alpha, \beta$ and $\kappa$, we choose $l$ such that $2l > n + \max\{m_1,m_2\}+|\alpha|+|\beta|+ |\kappa|$. So that
		\begin{linenomath*}
			\[
			\begin{aligned}
				|\partial_\xi^\alpha\partial_x^\beta r^{(1)}_\Lambda(t,x,\xi)| & \leq  C_1^{|\alpha|+|\beta|+2}\japxin^{m_1-\gamma-\gamma|\alpha|+(|\beta|+2l)/\sigma} \P^{m_2-\gamma-\gamma|\beta|+|\alpha|/\sigma}\\
				& \quad \times \int  |\kappa|!^\sigma \bigg(\frac{C_1 2^\sigma}{\Phi(y)\japxin}\bigg)^{|\kappa|} \Phi(y)^{2l} E_1(x,y,\xi) dy.
			\end{aligned}
			\]
		\end{linenomath*}
		Noting the inequality (see \cite[Lemma 6.3.10]{nicRodi})
		\begin{linenomath*}
			\[
			\inf_{j \in \mathbb{Z}_+} j!^\sigma \bigg(\frac{C_1 2^\sigma}{\Phi(y) \japxin}\bigg)^{j} \leq C' e^{-c_1(\Phi(y)\japxin)^{1/\sigma}},
			\]
		\end{linenomath*}
		for some positive constants $C'$ and $c_1$ where $C'$ depends only on $C_1$ and $c_1$ on $n$ and $C_1$, we have 
		\begin{linenomath*}
			\[
			\begin{aligned}
				|\partial_\xi^\alpha\partial_x^\beta r^{(1)}_\Lambda(t,x,\xi)| \: \leq & \quad  C^{|\alpha|+|\beta|+2}\japxin^{m_1-\gamma-\gamma|\alpha|+|\beta|/\sigma} \P^{m_2-\gamma-\gamma|\beta|+|\alpha|/\sigma}\\
				&\times \int 	e^{-c_1\Phi(y)^{1/\sigma}\japxin^{1/\sigma}} \Phi(y)^{2l} E_1(x,y,\xi) dy.		
			\end{aligned}
			\]
		\end{linenomath*}
		Let $l' \in \mathbb{Z}^+$ such that $\frac{l'}{\sigma} \geq  l$. Then, we have $e^{-c_1 \Phi(y)^{1/\sigma}\japxin ^{1/\sigma}} \Phi(y)^{2l'/\sigma} \leq (2l')!\;e^{-\frac{c_1}{2} \Phi(y)^{1/\sigma}\japxin ^{1/\sigma}}$. Hence,
		\begin{linenomath*}
			\[
			\begin{aligned}
				|\partial_\xi^\alpha\partial_x^\beta r^{(1)}_\Lambda&(t,x,\xi)| \\
				&\leq   C_1^{|\alpha|+|\beta|+2}\japxin^{m_1-\gamma-\gamma|\alpha|+|\beta|/\sigma} \P^{m_2-\gamma-\gamma|\beta|+|\alpha|/\sigma}\\
				& \quad \times \int \exp\Big\{(\Lambda\P^{1/\sigma}- \Lambda \Phi( x+\theta_1y)^{1/\sigma} - \frac{c_1}{2} \Phi(y)^{1/\sigma})\japxin^{1/\sigma}\Big\}
				dy.
			\end{aligned}
			\]
		\end{linenomath*}
		For $|x| \leq |y|$, clearly $\Phi(x)^{1/\sigma} - \Phi(x+\theta_1y)^{1/\sigma} \leq \Phi(y)^{1/\sigma} $. For $|x| \geq |y|$, we have 
		\begin{linenomath*}
			\begin{equation}\label{Pineq}
				\begin{aligned}
					\Phi(x)^{1/\sigma} - \Phi(x+\theta_1y)^{1/\sigma} &\leq \Phi(x)^{1/\sigma} - (\P-\Phi(\theta_1y))^{1/\sigma} \\
					&\leq \Phi(x)^{1/\sigma} - (\P^{1/\sigma}-\Phi(\theta_1 y)^{1/\sigma})\leq\Phi(y)^{1/\sigma}.
				\end{aligned}  			
			\end{equation}
		\end{linenomath*} 		
		Since $c_1$ is independent of $\Lambda$, there exists $\Lambda^{(1)}>0$ (in fact, $\Lambda^{(1)}=c_1/2$) such that, for $\Lambda = \Lambda(t) <\Lambda^{(1)}$ we obtain the estimate
		\begin{linenomath*}
			\[
			|\partial_\xi^\alpha\partial_x^\beta r^{(1)}_\Lambda(t,x,\xi)|\leq C^{|\alpha|+|\beta|+2} \P^{m_2-\gamma-\gamma|\beta|+|\alpha|/\sigma} e^{-\frac{c_1}{8}\japxin^{1/\sigma}}
			\] 	
		\end{linenomath*}	
		Thus $r^{(1)}_\Lambda \in C([0,T];AG^{-\infty,m_2-\gamma}_{\Phi,\sigma;\sigma,\sigma})$.
		
		In a similar fashion, we will determine the growth estimate for $r^{(2)}_\Lambda(t,x,\xi)$. Let $\alpha, \beta,\kappa \in \mathbb{Z}_+^n$ and $l \in \mathbb{Z}_+$. Then		
		\begin{linenomath*}
			\[
			\begin{aligned}
				&\partial_\xi^\alpha\partial_x^\beta r^{(2)}_\Lambda(t,x,\xi)\\
				&= \sum_{i=1}^{n} \sum_{\tiny{\beta'+ \beta'' \leq \beta}}\sum_{\scriptsize{\alpha'+\alpha'' \leq \alpha}} \int \dots \int \eta^{-\kappa} \eta^\kappa e^{-iy\cdot\eta} \zeta^{-\kappa} \zeta^\kappa e^{-iz\cdot\zeta} \la z \ran ^{-2l} \la \eta \ran ^{-2l}\la D_y\ran^{2l} \\
				&\quad\times  \la y \ran^{-2l} \la D_\zeta\ran^{2l}   \la D_\eta\ran^{2l} (\partial_\xi^{\alpha'} \partial_x^{\beta'} D_{x_i}p)(x+z,\xi+\eta)\\
				& \quad \times\int_{0}^{1} \partial_\xi^{\alpha''}\partial_x^{\beta''}\partial_{w_i} e^{\Lambda \P^{1/\sigma}(\la w \ran^{1/\sigma}-\japxin^{1/\sigma})}\Big|_{w=\xi+\theta_2(\eta+\zeta)}	
				d\theta_2 dz\textit{\dj}\zeta dy\textit{\dj}\eta,
			\end{aligned}
			\]	
		\end{linenomath*}
		\begin{linenomath*}
			\[
			\begin{aligned}
				&\partial_\xi^\alpha\partial_x^\beta r^{(2)}_\Lambda(t,x,\xi)\\
				&= \sum_{i=1}^{n} \sum_{\tiny{\beta'+ \beta'' \leq \beta}}\sum_{\scriptsize{\alpha'+\alpha'' \leq \alpha}} \int \dots \int \eta^{-\kappa} e^{-iy\cdot\eta} \zeta^{-\kappa} e^{-iz\cdot\zeta} D_y^\kappa D_z^\kappa \la z \ran ^{-2l} \la \eta \ran ^{-2l} \la D_y\ran^{2l} \\
				&\quad\times \la y \ran^{-2l}   \la D_\zeta\ran^{2l}   \la D_\eta \ran^{l} (\partial_\xi^{\alpha'} \partial_x^{\beta'} D_{x_i}p)(x+z,\xi+\eta)\\
				& \quad \times\int_{0}^{1} \partial_\xi^{\alpha''}\partial_x^{\beta''}\partial_{w_i} e^{\Lambda \P^{1/\sigma}(\la w \ran^{1/\sigma}-\japxin^{1/\sigma})}\Big|_{w=\xi+\theta_2(\eta+\zeta)}	
				d\theta_2 dz\textit{\dj}\zeta dy\textit{\dj}\eta.
			\end{aligned}
			\]	
		\end{linenomath*}
		Let $E_2(t,x,\xi,\eta,\zeta) = \exp\{\Lambda\P^{1/\sigma}(\la \xi+\theta_2(\eta+\zeta) \ran^{1/\sigma}-\japxin^{1/\sigma})\}$.	
		Using the easy to show inequality $\Phi(x+z)^s \leq 2^{|s|} \P^s \Phi(z)^{|s|}, \forall s \in \R$, we have	
		\begin{linenomath*}
			\[
			\begin{aligned}
				|\partial_\xi^\alpha\partial_x^\beta r^{(2)}_\Lambda(t,x,\xi)| & \leq  C_2^{|\alpha|+|\beta|+2}\japxin^{m_1-\gamma-\gamma|\alpha|+|\beta|/\sigma} \P^{m_2-\gamma-\gamma|\beta|+|\alpha|/\sigma}  \\
				& \quad \times \sum_{\tiny{\beta'+ \beta'' \leq \beta}}\sum_{\scriptsize{\alpha'+\alpha'' \leq \alpha}} \int \dots \int
				\Phi(z)^{|m_2-|\beta'||+1} \la \eta \ran^{|m_1-|\alpha'||}\\
				& \quad \times \la \eta \ran^{-2l} \la \eta + \zeta \ran^{\gamma(1+|\alpha''|)+|\beta''|/\sigma} |\kappa|!^\sigma\Big(\frac{C_2 2^\sigma}{\P \la \zeta \ran \la \eta \ran}\Big)^{|\kappa|}\\		
				& \quad \times \la z \ran ^{-2l+|\kappa|} \la y\ran^{-2l-|\kappa|} E_2(t,x,\xi,\eta,\zeta) dz\textit{\dj}\zeta dy\textit{\dj}\eta.
			\end{aligned}
			\]
		\end{linenomath*}
		In this case we choose $l$ such that $2l > 2(n+1) + \max\{m_1,m_2\}+|\alpha|+|\beta|+ |\kappa|$. Noting that $(\la \eta \ran \la \zeta \ran)^{-1} \leq \la \zeta + \eta \ran ^{-1}$ and
		\begin{linenomath*}
			\[
			\inf_{j \in \mathbb{Z}_+} j!^\sigma \bigg(\frac{C_2 2^\sigma}{\P \la \zeta + \eta \ran}\bigg)^{j} \leq C' e^{-c_2(\P \la \zeta + \eta \ran)^{1/\sigma}},
			\]
		\end{linenomath*}
		for some $c_2>0$. Thus we have
		\begin{linenomath*}
			\[
			\begin{aligned}
				|&\partial_\xi^\alpha\partial_x^\beta r^{(2)}_\Lambda(t,x,\xi)| \\
				&\leq  C^{|\alpha|+|\beta|+2}\japxin^{m_1-\gamma-\gamma|\alpha|+|\beta|/\sigma} \P^{m_2-\gamma-\gamma|\beta|+(|\alpha|+l)/\sigma} \int \int
				\la \eta \ran^{-2(n+1)} \\
				& \quad \times  \exp\{\P^{1/\sigma}(\Lambda\la \xi+(\eta+\zeta) \ran^{1/\sigma}-\Lambda\japxin^{1/\sigma}-\frac{c_2}{2}\la \zeta + \eta \ran^{1/\sigma})\} \textit{\dj}\zeta \textit{\dj}\eta.
			\end{aligned}
			\] 
		\end{linenomath*}	
		For $\la \xi+\eta+\zeta\ran \leq 3 \la \eta+\zeta\ran$, we have $\vert \la \xi+\eta+\zeta\ran^{1/\sigma} - \japxin^{1/\sigma} \vert \leq 3 \la \eta+\zeta\ran^{1/\sigma}$. For $\la \xi+\eta+\zeta\ran \geq 3 \la \eta+\zeta\ran$, that is, $\japxin \leq 2 \la \eta+\zeta\ran$, we have
		\begin{linenomath*}
			\begin{equation}\label{xineq}
				\la \xi+\eta+\zeta\ran^{1/\sigma} - \japxin^{1/\sigma} \leq |\eta+\zeta| (\japxin-\la \eta+\zeta\ran)^{\frac{1}{\sigma}-1} \leq \la \eta+\zeta\ran^{1/\sigma}.
			\end{equation}
		\end{linenomath*}		
		Since $c_2$ is independent of $\Lambda$, there exists $\Lambda^{(2)}>0$ (in fact, $\Lambda^{(2)}=c_2/12$) such that, for $\Lambda = \Lambda(t)<\Lambda^{(2)}$ we obtain the estimate
		\begin{linenomath*}
			\[
			|\partial_\xi^\alpha\partial_x^\beta r^{(2)}_\Lambda(t,x,\xi)|\leq C^{|\alpha|+|\beta|+2} \japxin^{m_1-\gamma-\gamma|\beta|+|\alpha|/\sigma} e^{-\frac{c_2}{8}\P^{1/\sigma}}.
			\] 	
		\end{linenomath*}		
		Thus $r^{(2)}_\Lambda \in C([0,T];AG^{m_1-\gamma,-\infty}_{\Phi,k,\sigma;\sigma,\sigma})$.
		By similar techniques used in the case of $ r^{(1)}_\Lambda$ and $ r^{(2)}_\Lambda$, one can show that $r^{(3)}_\Lambda \in C([0,T]; AG^{-\infty,-\infty}_{\Phi,k,\sigma;\sigma,\sigma})$. Taking $\Lambda_0 = \min\{\Lambda^{(1)},\Lambda^{(2)}\}$, proves the theorem.  
	\end{proof}
	\setcounter{rmk}{2}
	\begin{rmk}
		\begin{enumerate}
			\item The conjugation of Theorem \ref{conju} can also be performed by starting with a symbol $p\in AG_{\Phi,k,\sigma;\sigma,\sigma}^{m_1,m_2}$.
			
			\item If $\P \equiv C$ for some $C \geq 1$, then the proof of Theorem \ref{conju} takes simpler form as in \cite[Proposition 2.3]{Kaji}. In such case, for $C_0=C^{1/\sigma}$ we have
			\begin{linenomath*}
				\[
				e^{\Lambda(t)C_0 \la D_x\ra^{1/\sigma}} \: p(x,D_x) \: e^{-\Lambda(t)C_0 \la D_x\ra^{1/\sigma}} = p(x,D_x) + r_\Lambda(t,x,D_x),
				\]
			\end{linenomath*}
			where $r_\Lambda(t,x,\xi)$ is in the H\"{o}rmander class $S^{m_1-\gamma}_{\gamma,0}$, $\gamma=1-\frac{1}{\sigma}$, for each $t$.
		\end{enumerate}		
	\end{rmk}
	Next, we prove two corollaries of Theorem \ref{conju} which will be helpful in making change of variables in the proof of the main result.
	\setcounter{cor}{3}
	\begin{cor}\label{cor1}
		There exists $k^*>1$ such that for $k \geq k^*$,
		\begin{align}
			\label{inv1}
			e^{\Lambda(t) (\P\la D\ran)^{1/\sigma}} e^{-\Lambda(t) (\P\la D\ran)^{1/\sigma}} &= I+R(t,x,D_x)\\
			\label{inv2}
			e^{-\Lambda(t) (\P\la D\ran)^{1/\sigma}} e^{\Lambda(t) (\P\la D\ran)^{1/\sigma}} &= I+\tilde R(t,x,D_x)
		\end{align}
		where $I+R$ and $I+\tilde R$ are invertible operators with $R, \tilde R \in C([0,T]; OPAG^{-\gamma e}_{\Phi,k,\sigma;\sigma,\sigma})$.	
	\end{cor}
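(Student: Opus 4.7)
The plan is to directly compute the symbols of the two compositions and then invoke the order gain in the parameter $k$ to invert $I+R$ and $I+\tilde R$ by a Neumann series. First I would write the symbol of $e^{\Lambda(\P\la D\ran)^{1/\sigma}}(x,D) \circ e^{-\Lambda(\P\la D\ran)^{1/\sigma}}(x,D)$ as the oscillatory integral
\begin{linenomath*}
\[
\iint e^{-iy\cdot\eta}\, e^{\Lambda \psi(x,\xi+\eta)^{1/\sigma}}\, e^{-\Lambda \psi(x+y,\xi)^{1/\sigma}}\, dy\, \textit{\dj}\eta,
\]
\end{linenomath*}
where $\psi(x,\xi)=\P\japxin$. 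Expanding both exponential factors by the same first order Taylor expansion about $(x,\xi)$ used in the proof of Theorem \ref{conju}, the diagonal contribution $e^{\Lambda\psi(x,\xi)^{1/\sigma}}e^{-\Lambda\psi(x,\xi)^{1/\sigma}}$ integrates to $1$, and the three surviving cross products have exactly the structure of the terms $I_1,I_2,I_3$ of that proof, with the outer symbol $p$ now replaced by the constant symbol $1$.

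With this decomposition in hand, I would then apply verbatim the integration-by-parts bookkeeping carried out in the proof of Theorem \ref{conju}, using the cancellation estimates (\ref{Pineq}) and (\ref{xineq}) together with the restriction $\Lambda(t)<\Lambda_0$ to absorb the exponential factors. The outcome is that $R(t,x,\xi)$ lies in $C([0,T];AG^{-\gamma e}_{\Phi,k,\sigma;\sigma,\sigma})$, since each of the three cross terms contributes $-\gamma$ in exactly one of the two orders, while the missing order is produced by the paired expansion. The argument for (\ref{inv2}) is symmetric, exchanging the roles of the $x$- and $\xi$-translations in the oscillatory integral.

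The step from symbol decay to operator invertibility uses two ingredients. First, every estimate derived above carries an overall prefactor $\Phi(x)^{-\gamma}\japxin^{-\gamma}$ at principal order; because $\P\geq 1$ and $\japxin\geq k$, every symbol seminorm of $R$ of the finite order required by an $L^2$-continuity theorem for $OPAG^{-\gamma e}_{\Phi,k,\sigma;\sigma,\sigma}$ is dominated by a constant times $k^{-\gamma}$, uniformly in $t \in [0,T]$. Second, I would invoke such a Calder\'on-Vaillancourt type $L^2$-boundedness result to obtain $\|R(t,\cdot,D_x)\|_{L^2\to L^2}\leq C\,k^{-\gamma}$, and then choose $k^{*}>1$ so that $C k^{-\gamma}\leq 1/2$ for all $k\geq k^{*}$. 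For such $k$ the Neumann series $\sum_{j\geq 0}(-R)^{j}$ converges in operator norm and provides a bounded inverse of $I+R$; the identical argument applied to $\tilde R$ establishes (\ref{inv2}).

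The main obstacle I anticipate is the careful tracking of $k$-dependence through the oscillatory integrals, to confirm that apart from the explicit gain $\japxin^{-\gamma}\leq k^{-\gamma}$ the constants in the bounds on $R$ and $\tilde R$ are independent of $k$ (or at worst grow strictly subdominantly). A secondary technical point is selecting the correct finite order of symbol seminorms for the $L^2$-continuity theorem adapted to $AG^{-\gamma e}_{\Phi,k,\sigma;\sigma,\sigma}$; this is supplied by the calculus developed in Appendices II and III and reduces to a standard Schur-test once the symbol estimates above are in place.
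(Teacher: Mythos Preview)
Your approach is essentially the paper's: equation (\ref{inv1}) is exactly Theorem~\ref{conju} applied with $p\equiv 1$, and the invertibility via the $k^{-\gamma}$ gain and a Neumann series matches the paper verbatim. One correction: your description of (\ref{inv2}) as following by ``exchanging the roles of the $x$- and $\xi$-translations'' is not the right mechanism, and the estimate (\ref{Pineq}) you cite does not apply there. The composition in (\ref{inv2}) has the same oscillatory-integral structure as (\ref{inv1}) but with the signs on $\Lambda$ swapped in the two exponential factors, so the quantity to control in the $I_1$-type term becomes $-\Phi(x)^{1/\sigma}+\Phi(x+\theta_1 y)^{1/\sigma}$ rather than $\Phi(x)^{1/\sigma}-\Phi(x+\theta_1 y)^{1/\sigma}$. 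The paper handles this by replacing (\ref{Pineq}) with the sign-reversed inequality
\[
-\Phi(x)^{1/\sigma}+\Phi(x+\theta_1 y)^{1/\sigma}\leq \Phi(y)^{1/\sigma},
\]
which again follows from subadditivity of $\Phi$; with that single adjustment your argument goes through unchanged.
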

	%\vspace{-0.5cm}
	\begin{proof}
		The equation (\ref{inv1}) can be derived by an application of Theorem \ref{conju} with $p(x,D) \equiv I \in OPAG^{0,0}_{\Phi,k;\sigma}$, where $I$ is an identity operator. This yields $R \in C([0,T]; OPAG^{-\gamma e}_{\Phi,k,\sigma;\sigma,\sigma})$. We can estimate the operator norm of $R(t,x,D_x)$ by $C_1k^{-\gamma}$.
		Choosing $k \geq k_1$, where $k_1$ is sufficiently large, ensures that the operator norm of $R(t,x,D_x)$ is strictly lesser than $1$. This guarantees the existence of 
		\[
		(I+R(t,x,D_x))^{-1} = \sum_{j=0}^{\infty} (-R(t,x,D_x))^{j}.
		\]
		As for the equation (\ref{inv2}), we follow the same procedure given in the proof of Theorem \ref{conju} with inequality (\ref{Pineq}) replaced with
		\begin{linenomath*}
			\begin{equation}
				\begin{aligned}
					-\P^{1/\sigma} + \Phi(x+\theta_1y)^{1/\sigma} &\leq -\P^{1/\sigma} + (\P+\Phi(\theta_1y))^{1/\sigma}\\
					&\leq -\P^{1/\sigma} + \P^{1/\sigma} + \Phi(\theta_1y))^{1/\sigma} \leq \Phi(y))^{1/\sigma},
				\end{aligned}
			\end{equation}
			for $x,y\in\R^n.$ This yields (\ref{inv2}) with $\tilde R \in C([0,T]; OPAG^{-\gamma e}_{\Phi,k,\sigma;\sigma,\sigma})$. Choosing $k \geq k_2$ for $k_2$ sufficiently large, guarantees that $I+\tilde R$ is invertible. Taking $k^* = \max\{k_1,k_2\}$ proves the corollary.
		\end{linenomath*}
		
	\end{proof}
	
	If we take $\Lambda(t) = \frac{\lambda}{\delta}(T^\delta - t^\delta)$ for $\lambda>0$ and $\delta \in (0,1)$, then we easily have  
	\begin{linenomath*}
		\begin{align*}
			e^{\Lambda(t) (\P\la D\ran)^{1/\sigma}} \: &\partial_t \: e^{-\Lambda(t) (\P\la D\ran)^{1/\sigma}}w(t,x) \\
			&= (I+R)\left(\partial_tw(t,x) -\Lambda'(t) (\P\la D\ran)^{1/\sigma}w(t,x)\right)\\
			& = (I+R)\Big(\partial_t + \frac{\lambda}{t^{1-\delta}}(\P\la D\ran)^{1/\sigma}\Big)w(t,x),
		\end{align*}
	\end{linenomath*}
	where the operator $R(t,x,D_x)$ is in $C([0,T]; OPAG^{-\gamma e}_{\Phi,k,\sigma;\sigma,\sigma})$. As in Corollary \ref{cor1},
	the operator $I+R(t,x,D_x)$ is invertible for sufficiently large $k$.
	In the proof of the main result, we choose $\lambda$ appropriately so that we can apply sharp G\r{a}rding inequality to prove the a priori estimate (\ref{est2}).
	
	\begin{cor}\label{cor2}
		Let $0 \leq \varepsilon \leq \varepsilon' < \Lambda_0$ where $\Lambda_0$ is as in Thereom \ref{conju}. Then
		\begin{linenomath*}
			\begin{equation}
				\label{inv3}
				e^{\varepsilon (\P\la D\ran)^{1/\sigma}} e^{-\varepsilon' (\P\la D\ran)^{1/\sigma}} = e^{(\varepsilon-\varepsilon') (\P\la D\ran)^{1/\sigma}}(I+\hat R(x,D_x)).
			\end{equation}
		\end{linenomath*}
		where $\hat R \in OPAG^{-\gamma e}_{\Phi,k,\sigma;\sigma,\sigma}$ and for sufficiently large $k$, $I+\hat R$ is invertible.
	\end{cor}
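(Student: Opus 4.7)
The plan is to compute the Kohn--Nirenberg symbol of the composition $E(\varepsilon)E(-\varepsilon')$ (writing $E(\mu) = e^{\mu(\P\la D \ran)^{1/\sigma}}$) directly from the oscillatory-integral composition formula, then factor out the expected leading contribution $e^{(\varepsilon-\varepsilon')\psi(x,\xi)^{1/\sigma}}$ (with $\psi(x,\xi) = \P\japxin$) and show the residual has symbol in $AG^{-\gamma e}_{\Phi,k,\sigma;\sigma,\sigma}$. Concretely, the KN symbol of $E(\varepsilon)E(-\varepsilon')$ is
\begin{linenomath*}
\[
\sigma(x,\xi) = \iint e^{-iy\cdot\eta}\, e^{\varepsilon\psi(x,\xi+\eta)^{1/\sigma}}\, e^{-\varepsilon'\psi(x+y,\xi)^{1/\sigma}}\, dy\,\textit{\dj}\eta,
\]
\end{linenomath*}
and the goal is to identify its structure modulo a remainder of order $-\gamma e$.

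Mimicking the proof of Theorem \ref{conju}, I would Taylor-expand $e^{\varepsilon\psi(x,\xi+\eta)^{1/\sigma}}$ around $\eta=0$ and $e^{-\varepsilon'\psi(x+y,\xi)^{1/\sigma}}$ around $y=0$. The product of the zeroth-order terms is $e^{(\varepsilon-\varepsilon')\psi(x,\xi)^{1/\sigma}}$, which (after the trivial oscillatory integral collapses) is precisely the KN symbol of $E(\varepsilon-\varepsilon')$. The three remaining contributions — a $y$-remainder, an $\eta$-remainder, and the double remainder — are the direct analogues of $I_1, I_2, I_3$ in the proof of Theorem \ref{conju}. I would estimate them by the same integration-by-parts and Planck-function technique: the inequalities (\ref{Pineq}) and (\ref{xineq}) control the exponential growth coming from differences of $\psi^{1/\sigma}$ at shifted base points, and since $\varepsilon,\varepsilon' < \Lambda_0$ these growth factors are dominated by Gaussian-type decay $e^{-c\Phi(y)^{1/\sigma}\japxin^{1/\sigma}}$ (respectively in $\eta$) via the inequality $\inf_j j!^\sigma \big(C2^\sigma/(\Phi(y)\japxin)\big)^j \leq C' e^{-c(\Phi(y)\japxin)^{1/\sigma}}$ from \cite[Lemma 6.3.10]{nicRodi}. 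After dividing the three remainder terms by $e^{(\varepsilon-\varepsilon')\psi(x,\xi)^{1/\sigma}}$, the quotient $\hat r(x,\xi)$ obeys the bounds (\ref{sym3}) with $m_1 = m_2 = -\gamma$, so $\hat r \in AG^{-\gamma e}_{\Phi,k,\sigma;\sigma,\sigma}$. The operator identity $E(\varepsilon)E(-\varepsilon') = E(\varepsilon-\varepsilon')(I+\hat R)$ then follows by the standard composition calculus: one checks that the KN symbol of $E(\varepsilon-\varepsilon')(I+\hat R)$ equals $\sigma$ modulo lower-order corrections that can be iteratively absorbed into $\hat R$.

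For invertibility of $I + \hat R$, I would follow the argument of Corollary \ref{cor1}: since $\hat R \in OPAG^{-\gamma e}_{\Phi,k,\sigma;\sigma,\sigma}$, its $L^2$-operator norm is bounded by $C k^{-\gamma}$ for a constant $C$ independent of $k$. For $k$ large enough, this norm is strictly less than $1$, so $(I+\hat R)^{-1} = \sum_{j \geq 0}(-\hat R)^j$ converges in operator norm.

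The main obstacle, as in Theorem \ref{conju}, will be the careful bookkeeping of the two competing exponential factors $e^{\varepsilon\psi^{1/\sigma}}$ and $e^{-\varepsilon'\psi^{1/\sigma}}$ through the Taylor remainders and the integration-by-parts steps, so that after factoring out $e^{(\varepsilon-\varepsilon')\psi(x,\xi)^{1/\sigma}}$ the residue is of order exactly $(-\gamma,-\gamma)$ rather than merely of some subleading order. The key observation that makes this tractable is that the threshold $\Lambda_0$ obtained in Theorem \ref{conju} was derived by controlling each exponential factor separately, so the hypothesis $\max\{\varepsilon,\varepsilon'\} < \Lambda_0$ is enough for both Taylor remainders simultaneously; no new restriction on $\Lambda_0$ arises from treating the two-sided product.
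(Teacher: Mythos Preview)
Your proposal is correct and follows essentially the same route as the paper's proof. The only cosmetic difference is that the paper, rather than first dividing the remainder terms by $e^{(\varepsilon-\varepsilon')\psi(x,\xi)^{1/\sigma}}$ and then invoking (\ref{Pineq}) and (\ref{xineq}), instead writes down the slightly modified inequalities
\[
\varepsilon\Phi(x)^{1/\sigma} - \varepsilon'\Phi(x+\theta_1 y)^{1/\sigma} \leq (\varepsilon-\varepsilon')\Phi(x)^{1/\sigma} + \varepsilon'\Phi(y)^{1/\sigma}
\]
and
\[
\varepsilon\la \xi+\eta+\zeta\ran^{1/\sigma} - \varepsilon'\japxin^{1/\sigma} \leq (\varepsilon-\varepsilon')\japxin^{1/\sigma} + \varepsilon\la \eta+\zeta\ran^{1/\sigma},
\]
which amount to exactly the same thing once you unpack your division step; the invertibility argument via the Neumann series for large $k$ is identical.
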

	\begin{proof}
		The equation (\ref{inv3}) can be derived by an easy extension of Theorem \ref{conju}. For this we replace the inequality (\ref{Pineq}) with
		\begin{linenomath*}
			\begin{equation}
				\label{Pineq2}
				\begin{aligned}
					\varepsilon\Phi(x)^{1/\sigma} - \varepsilon'\Phi(x+\theta_1y)^{1/\sigma} 
					&\leq \varepsilon\Phi(x)^{1/\sigma} - \varepsilon'(\P-\Phi(\theta_1y))^{1/\sigma} \\
					&\leq \varepsilon\Phi(x)^{1/\sigma} - \varepsilon'(\P^{1/\sigma}-\Phi(\theta_1 y)^{1/\sigma}) \\
					&\leq (\varepsilon-\varepsilon')\Phi(x)^{1/\sigma} +\varepsilon'\Phi(y)^{1/\sigma}
				\end{aligned}
			\end{equation}
		\end{linenomath*}
		when $\vert x \vert \geq \vert y \vert$ and $\varepsilon \P^{1/\sigma} - \varepsilon'\Phi(x+\theta_1y)^{1/\sigma} \leq \varepsilon \P^{1/\sigma} + \varepsilon' \Phi(y)^{1/\sigma} -\varepsilon' \P^{1/\sigma} $ when $\vert x \vert \leq \vert y \vert$
		and the inequality (\ref{xineq}) with
		\begin{linenomath*}
			\begin{equation}
				\label{xineq2}
				\begin{aligned}
					\varepsilon \la \xi+\eta+\zeta\ran^{1/\sigma} - \varepsilon'\japxin^{1/\sigma} 
					&\leq \varepsilon \japxin^{1/\sigma} + \varepsilon \la \eta+\zeta\ran^{1/\sigma} - \varepsilon'\japxin^{1/\sigma}\\
					& \leq (\varepsilon-\varepsilon') \japxin^{1/\sigma} + \varepsilon  \la \eta+\zeta\ran^{1/\sigma},
				\end{aligned}
			\end{equation}
			for $\xi,\eta,\zeta \in \R^n.$ 
		\end{linenomath*}
	\end{proof}
	We use the above corollaries to prove the continuity of an infinite order operator $e^{\varepsilon (\P\la D\ran)^{1/\sigma}}$ on the spaces $H^{s,\varepsilon',\sigma}_{\Phi,k}$.
	\begin{prop}
		The operator $e^{\varepsilon (\P\la D\ran)^{1/\sigma}} : H^{s,\varepsilon',\sigma}_{\Phi,k} \to H^{s,\varepsilon'-\varepsilon,\sigma}_{\Phi,k}$ is continuous for $k\geq k_0$ and $0 \leq \varepsilon \leq \varepsilon' < \Lambda_0$ where $k_0$ sufficiently large and $\Lambda_0$ is as in Thereom \ref{conju}.
	\end{prop}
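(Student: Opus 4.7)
By the definition of the norm on $H^{s,\varepsilon,\sigma}_{\Phi,k}$ in Definition \ref{Sobo}, the continuity claim is equivalent to the $L^2$ estimate
\[
\|\P^{s_2}\la D\ra^{s_1} e^{(\varepsilon'-\varepsilon)(\P\la D\ran)^{1/\sigma}} e^{\varepsilon(\P\la D\ran)^{1/\sigma}} v\|_{L^2} \leq C \|v\|_{\Phi,k;s,\varepsilon',\sigma}
\]
for all $v$ in a dense subspace such as $\mathcal{M}^\sigma_{\Phi,k}(\R^n)$. The whole task is thus to pull the source-norm infinite-order factor $e^{\varepsilon'(\P\la D\ran)^{1/\sigma}}$ out of the composition $e^{(\varepsilon'-\varepsilon)(\P\la D\ran)^{1/\sigma}} \circ e^{\varepsilon(\P\la D\ran)^{1/\sigma}}$ embedded in the target norm.

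My plan is to accomplish this rearrangement using only Corollaries \ref{cor1} and \ref{cor2}, without re-doing any new oscillatory-integral calculation. Starting from Corollary \ref{cor2},
\[
e^{\varepsilon(\P\la D\ran)^{1/\sigma}} e^{-\varepsilon'(\P\la D\ran)^{1/\sigma}} = e^{(\varepsilon-\varepsilon')(\P\la D\ran)^{1/\sigma}}(I + \hat R),
\]
I will compose on the left with $e^{(\varepsilon'-\varepsilon)(\P\la D\ran)^{1/\sigma}}$, absorbing the product $e^{(\varepsilon'-\varepsilon)(\P\la D\ran)^{1/\sigma}}\circ e^{-(\varepsilon'-\varepsilon)(\P\la D\ran)^{1/\sigma}}$ on the right via (\ref{inv1}), then compose on the right with $e^{\varepsilon'(\P\la D\ran)^{1/\sigma}}$ and use (\ref{inv2}) to identify $e^{-\varepsilon'(\P\la D\ran)^{1/\sigma}} e^{\varepsilon'(\P\la D\ran)^{1/\sigma}} = I + \tilde R$. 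This produces
\[
e^{(\varepsilon'-\varepsilon)(\P\la D\ran)^{1/\sigma}} e^{\varepsilon(\P\la D\ran)^{1/\sigma}}(I + \tilde R) = (I + R)(I + \hat R)\,e^{\varepsilon'(\P\la D\ran)^{1/\sigma}},
\]
with $R, \hat R, \tilde R \in OPAG^{-\gamma e}_{\Phi,k,\sigma;\sigma,\sigma}$. Choosing $k_0$ large enough that $I + \tilde R$ is invertible (by the same Neumann-series argument used in Corollary \ref{cor1}) and inverting, I obtain the clean factorization
\[
e^{(\varepsilon'-\varepsilon)(\P\la D\ran)^{1/\sigma}} e^{\varepsilon(\P\la D\ran)^{1/\sigma}} = (I + R)(I + \hat R)\,e^{\varepsilon'(\P\la D\ran)^{1/\sigma}}(I + \tilde R)^{-1}.
\]

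To finish, I insert this factorization into the left-hand side of the Paragraph~1 estimate, pre-multiply by $\P^{s_2}\la D\ra^{s_1}$, and commute this finite-order weight through the various remainder factors. The commutators with $(I + R)$, $(I + \hat R)$ are routine Appendix~II symbol calculus, while commuting $(I + \tilde R)^{-1}$ across $e^{\varepsilon'(\P\la D\ran)^{1/\sigma}}$ is exactly one instance of the conjugation of Theorem \ref{conju} and yields another operator $I + \tilde R'$ of the same negative order. The outcome is
\[
\P^{s_2}\la D\ra^{s_1} e^{(\varepsilon'-\varepsilon)(\P\la D\ran)^{1/\sigma}} e^{\varepsilon(\P\la D\ran)^{1/\sigma}} v \;=\; S \cdot \P^{s_2}\la D\ra^{s_1} e^{\varepsilon'(\P\la D\ran)^{1/\sigma}} v,
\]
where $S$ is a finite product of operators in $OPAG^{0,0}_{\Phi,k,\sigma;\sigma,\sigma}$ together with strictly negative-order corrections, hence $L^2$-bounded by a Calder\'on--Vaillancourt-type estimate from the calculus of Appendix~II. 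Since $\|\P^{s_2}\la D\ra^{s_1} e^{\varepsilon'(\P\la D\ran)^{1/\sigma}} v\|_{L^2} = \|v\|_{\Phi,k;s,\varepsilon',\sigma}$, taking $L^2$ norms yields the required continuity.

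The main obstacle I anticipate is bookkeeping: each commutation spawns a new remainder in $OPAG^{-\gamma e}_{\Phi,k,\sigma;\sigma,\sigma}$, and one must verify that the finite product $S$ is $L^2$-bounded \emph{uniformly in $k \geq k_0$}, as the statement requires. The key quantitative input is the estimate $\|R\|, \|\hat R\|, \|\tilde R\|_{L^2 \to L^2} = O(k^{-\gamma})$ already used in the proof of Corollary \ref{cor1} to construct $(I + \tilde R)^{-1}$ via Neumann series; the same bound controls $\|S\|_{L^2 \to L^2}$ uniformly, yielding the desired constant $C$ independent of $k$ above the threshold $k_0$.
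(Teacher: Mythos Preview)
Your proposal is correct and follows essentially the same route as the paper's proof. Both arguments derive the key operator identity
\[
e^{(\varepsilon'-\varepsilon)(\P\la D\ran)^{1/\sigma}} e^{\varepsilon(\P\la D\ran)^{1/\sigma}}(I+\tilde R)=(I+R)(I+\hat R)\,e^{\varepsilon'(\P\la D\ran)^{1/\sigma}}
\]
from Corollaries~\ref{cor1} and~\ref{cor2}, invert the zero-order factor $I+\tilde R$ for $k$ large, and then read off the norm estimate; the paper simply writes the inversion as the substitution $w=(I+R_1)^{-1}v$, whereas you phrase it as a right multiplication by $(I+\tilde R)^{-1}$, which is the same manoeuvre. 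Your handling of the weight $\P^{s_2}\la D\ra^{s_1}$ and of the commutation of $(I+\tilde R)^{-1}$ across $e^{\varepsilon'(\P\la D\ran)^{1/\sigma}}$ via Theorem~\ref{conju} makes explicit what the paper leaves implicit in the final line of its proof.
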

	\begin{proof}
		Consider $w $ in $H^{s,\varepsilon',\sigma}_{\Phi,k} $. From Corollaries \ref{cor1} and \ref{cor2}, we have
		$$
		\begin{aligned}
			e^{-\varepsilon' (\P\la D\ran)^{1/\sigma}} e^{\varepsilon' (\P\la D\ran)^{1/\sigma}} &= I+R_1(x,D_x), \\
			e^{\varepsilon (\P\la D\ran)^{1/\sigma}} e^{-\varepsilon' (\P\la D\ran)^{1/\sigma}} &= e^{(\varepsilon-\varepsilon') (\P\la D\ran)^{1/\sigma}}(I+R_2(x,D_x)),\\
			e^{(\varepsilon' -\varepsilon) (\P\la D\ran)^{1/\sigma}} e^{-(\varepsilon' -\varepsilon) (\P\la D\ran)^{1/\sigma}} &= I+R_3(x,D_x).
		\end{aligned}
		$$	
		where $R_1, R_2 , R_3\in OPAG^{-\gamma e}_{\Phi,k,\sigma;\sigma,\sigma}$.	For $k\geq k_0$, $k_0$ sufficiently large, the operators $I+R_j(x,D_x),j=1,2,3$ are invertible.
		Then, one can write 
		\begin{equation*}
			%\label{c1}
			e^{\varepsilon (\P\la D\ran)^{1/\sigma}} w = e^{\varepsilon (\P\la D\ran)^{1/\sigma}}  \left(e^{-\varepsilon' (\P\la D\ran)^{1/\sigma}} e^{\varepsilon' (\P\la D\ran)^{1/\sigma}} -R_1 \right) w.
		\end{equation*}
		This implies that 
		\begin{equation}
			\label{c2}
			e^{\varepsilon (\P\la D\ran)^{1/\sigma}} (I+R_1)w = e^{(\varepsilon-\varepsilon') (\P\la D\ran)^{1/\sigma}} (I+R_2) e^{\varepsilon' (\P\la D\ran)^{1/\sigma}} w.
		\end{equation}
		From (\ref{c2}), we have
		\[
		e^{(\varepsilon' -\varepsilon) (\P\la D\ran)^{1/\sigma}} e^{\varepsilon (\P\la D\ran)^{1/\sigma}} (I+R_1)w = (I+R_3)(I+R_2) e^{\varepsilon' (\P\la D\ran)^{1/\sigma}} w.
		\]
		Note that $(I+R_j),j=1,2,3,$ are bounded and invertible operators. Substituting  $w = (I+R_1)^{-1}v$ and taking $L^2$ norm on both sides of the above equation yields 
		\[
		\Vert e^{\varepsilon (\P\la D\ran)^{1/\sigma}} v \Vert_{\Phi,k;s,\varepsilon'-\varepsilon,\sigma} \leq C_1 \Vert (I+R_1)^{-1}v  \Vert_{\Phi,k;s,\varepsilon',\sigma} \leq C_2 \Vert v  \Vert_{\Phi,k;s,\varepsilon',\sigma},
		\]
		for all $v \in H^{s,\varepsilon',\sigma}_{\Phi,k}$ and for some $C_1,C_2>0$. This proves the proposition. 
	\end{proof}
	
	Note that in the proof of Theorem \ref{result2} we perform a change of variable (\ref{cv}) where we require that $\Lambda(t)< \Lambda^* = \min\{\Lambda_0,\Lambda_1,\Lambda_2\}$. Here $\Lambda_0$ is as dictated by Theorem \ref{conju} while $\Lambda_1$ and $\Lambda_2$ are Sobolev indices of the Cauchy data and the right hand side of the Cauchy problem (\ref{eq1}), respectively.

		\section{Subdivision of the Phase Space}\label{zones}
	One of the main tools in our analysis is the division of the extended phase space into two regions using the Planck function, $h(x,\xi)=(\P \japxin)^{-1}$. We use these regions in the proof of Theorem \ref{result2} (see Section \ref{factr}) to handle the low regularity in $t$. To this end we define $t_{x,\xi}$, for a fixed $(x,\xi)$, as the solution to the equation
	\begin{linenomath*}
		\[
		t^{q}=N\:h(x,\xi),
		\]
	\end{linenomath*}
	where $N$ is the positive constant and $q$ is the given order of singularity. 
	Since $3 \leq  \sigma < q/(q-1)$, we consider $\delta \in (0,1)$ such that
	\begin{linenomath*}
		\begin{equation} \label{delta}
			\frac{1}{\sigma}=\frac{q-1+\delta}{q}=1-\frac{1-\delta}{q}.
		\end{equation}
	\end{linenomath*}
	Denote $\gamma=1 -\frac{1}{\sigma}$. Using $t_{x,\xi}$ and the notation $J=\J$ we define the interior region
	\begin{linenomath*}
		\begin{equation} \label{zone1}
			\begin{aligned}
				\pd &=\{(t,x,\xi)\in J : 0 \leq t \leq t_{x,\xi}, \: |x|+|\xi|> N\} \\
				&=\{(t,x,\xi) \in J :t^{1-\delta}\leq N^\gamma\:\ h(x,\xi)^{\gamma},\: |x|+|\xi|>N\},
			\end{aligned}
		\end{equation}
	\end{linenomath*}
	and the exterior region
	\begin{linenomath*}
		\begin{equation} \label{zone2}
			\begin{aligned}
				\hyp &=\{(t,x,\xi)\in J : t_{x,\xi} \leq t \leq T, \: |x|+|\xi|> N\} \\
				&=\{(t,x,\xi)\in J : t^{1-\delta}\geq N^\gamma\:\ h(x,\xi)^{\gamma},\: |x|+|\xi|> N\}.
			\end{aligned}
		\end{equation}
	\end{linenomath*}	
	The utility of these regions lies in decomposing our operator into, mainly, two operators. The first operator has a high-order in $(x,\xi)$ but excludes the singularity at $t=0$ and the second operator has a singularity at $t=0$ but is of lower-order in $(x,\xi)$.

			\section{Proof of the Main Result} \label{Proofs}
	In this section, we give a proof of the main result. %The key steps in our proof are similar to some of the techniques used in \cite{Cico1,Asc,CicoLor,AscaCappi2}.
	There are three key steps in the proof of Theorem \ref{result2}. First, we factorize the operator $P(t,x,\partial_t,D_x)$. To this end, we begin with regularizing the characteristic roots of the principal symbol of the operator. Second, we reduce the operator $P$ to a pseudodifferential system of first order.  Lastly, we perform a conjugation to deal with the low-regularity in $t$. Using sharp G\r{a}rding’s inequality we arrive at the $L^2$-well-posedness of a related auxiliary Cauchy problem, which gives well-posedness of the original problem in the Sobolev spaces $H^{s,\varepsilon,\sigma}_{\Phi,k}$, $3 \leq \sigma<q/(q-1)$.

	\subsection{Factorization}\label{factr}
	We are interested in a factorization of the operator $P(t,x,\partial_t,D_x)$. Formally, this leads to
	\begin{linenomath*}
		\begin{equation}\label{fact}	
			\begin{aligned}		
				P(t,x,\partial_t,D_x)=(&\partial_t-i\tau_m(t,x,D_x))\cdots(\partial_t-i\tau_1(t,x,D_x))\\
				&+\sum_{k=0}^{m-1}R_k(t,x,D_x)\partial_t^k
			\end{aligned}
		\end{equation}
	\end{linenomath*}
	where $\tau_j \in C([0,T];AG^e_{\Phi,k;\sigma})\cap C^1((0,T];AG^e_{\Phi,k;\sigma})$ such that $t^q\partial_t\tau_j \in C([0,T];AG^e_{\Phi,k;\sigma})$. Since the operators $\tau_j(t,x,D_x)$ are not differentiable with respect to $t$ at $t=0$, we use regularized roots $\lambda_j(t,x,D_x)$ in (\ref{fact}) instead of $\tau_j(t,x,D_x)$ for $j=1,\cdots, m$. For this purpose we extend the roots on $(T,\infty]$ by setting
	\begin{linenomath*}
		\begin{align*}
			%\tau_j \var &= \tau_j(0,x,\xi) \; \text{ when } t<0, \\
			\tau_j \var &= \tau_j(T,x,\xi) \; \text{ when } t>T.
		\end{align*}
	\end{linenomath*}
	Then we define the regularized root $\lambda_j(t,x,\xi)$ as 
	\begin{linenomath*}
		\begin{equation}
			\label{molli}
			\lambda_j(t,x,\xi)=\int_\R \tau_j\left(t- h(x,\xi)s,x,\xi\right)\rho(s)ds
		\end{equation}
	\end{linenomath*}
	where $\rho$ is compactly supported smooth function in $ \mathcal{S}_1^1(\R)$ satisfying $\int\limits_{\R}\rho(s)ds=1$ and $0\leq\rho(s)\leq1$ with supp $\rho(s) \subset \R_{<0}$. Then
	\begin{linenomath*}
		\begin{align*}
			(\lambda_j-\tau_j)(t,x,\xi) &= \int (\tau_j(t- h(x,\xi)s,x,\xi)-\tau_j(t,x,\xi))\rho(s)ds\\
			&= \frac{1}{\h}\int (\tau_j(s,x,\xi)-\tau_j(t,x,\xi))\rho((t-s)h(x,\xi)^{-1})ds.
		\end{align*}
	\end{linenomath*}	
	It is easy to see that
	\begin{linenomath*}
		\begin{equation}
			\label{intg}
			\begin{cases}
				\lambda_j-\tau_j \in L^1([0,T];AG^e_{\Phi,k;\sigma})\cap C([0,T];AG^e_{\Phi,k;\sigma})\\
				\partial_t^k\lambda_j \in L^1([0,T];AG^{(k+1)e}_{\Phi,k;\sigma})\cap C([0,T];AG^{(k+1)e}_{\Phi,k;\sigma}),
			\end{cases} 
		\end{equation}
	\end{linenomath*}
	and note that in $\hyp$ we have
	\begin{linenomath*}
		\begin{equation}
			\label{notintg}
			\begin{cases}
				t^q(\lambda_j-\tau_j) \in C([0,T];AG^{0,0}_{\Phi,k;\sigma}) \\
				t^q\partial_t^k\lambda_j \in C([0,T];AG^{ke}_{\Phi,k;\sigma}).
			\end{cases}
		\end{equation}
	\end{linenomath*}	
	We define the operator 
	\begin{linenomath*}
		\[
		\tilde{P}(t,x,\partial_t,D_x)=(\partial_t-i\lambda_m(t,x,D_x)) \cdots (\partial_t-i\lambda_1(t,x,D_x)).
		\]
	\end{linenomath*}
	By (\ref{intg}) and (\ref{notintg}) one has the following factorization of the operator $P$
	\begin{linenomath*}
		\[
		P(t,x,\partial_t,D_x)=\tilde{P}(t,x,\partial_t,D_x)+R(t,x,\partial_t,D_x),
		\]
	\end{linenomath*}
	where
	\begin{linenomath*}
		\[
		R(t,x,\partial_t,D_x)=\sum_{k=0}^{m-1}R_j(t,x,D_x)\partial_t^j
		\]
	\end{linenomath*}
	such that for $j=0,\dots,m-1$,
	\begin{linenomath*}
		\begin{equation}
			\label{pd}
			R_j \in L^1([0,T];AG^{(m-j)e}_{\Phi,k;\sigma})\cap C([0,T];AG^{(m-j)e}_{\Phi,k;\sigma}), \:\; \text{ and }
		\end{equation}
	\end{linenomath*}
	\begin{linenomath*}
		\begin{equation}
			\label{hyp}
			t^q R_j \in C([0,T];AG^{(m-1-j)e}_{\Phi,k;\sigma}) \text{ in } \hyp.
		\end{equation}
	\end{linenomath*}	
	To determine the precise Gevrey regularity for $R_{j}(t,x,\xi)$, we consider the regions, $\pd$ and $\hyp$, separately.	
	In $\pd$, we have $(\P\japxin)^\gamma \leq \frac{N^\gamma}{t^{1-\delta}}
	$. Using this and (\ref{pd}), we can write 
	\begin{linenomath*}
		\begin{eqnarray}
			\nonumber \vert \partial_{\xi}^{\alpha} D_x^\beta R_j(t,x,\xi) \vert &\leq& C^{|\alpha|+|\beta|} \beta!^\sigma \alpha! \P^{m-\gamma-j-|\beta|}\japxin^{m-\gamma-j-|\alpha|} (\P\japxin)^\gamma \\
			\nonumber &\leq & C_1^{|\alpha|+|\beta|} \beta!^\sigma \alpha!\frac{N^\gamma}{t^{1-\delta}}\P^{m-\gamma-j-|\beta|} \japxin^{m-\gamma-j-|\alpha|}
		\end{eqnarray}
	\end{linenomath*}
	Similarly, in $\hyp$, we have $t^{q/\sigma} \geq (N\;h(x,\xi))^{\frac{1}{\sigma}} $ and
	\[
	\frac{1}{t^q}=\frac{1}{t^{1-\delta}}\frac{1}{t^{q/\sigma}} \leq \frac{1}{t^{1-\delta}}\bigg(\frac{\P\japxin}{N}\bigg)^{1/\sigma}
	\]
	Using this and (\ref{hyp}), we have 
	\begin{linenomath*}
		\begin{align*}
			\nonumber \vert \partial_{\xi}^{\alpha}D_x^\beta R_j(t,x,\xi) \vert &\leq C^{|\alpha|+|\beta|} \beta!^\sigma \alpha!\frac{1}{t^q} \P^{m-1-j-|\beta|} \japxin^{m-1-j-|\alpha|}\\
			\nonumber &\leq C_1^{|\alpha|+|\beta|} \beta!^\sigma \alpha!\frac{N^{-1/\sigma}}{t^{1-\delta}}\P^{m-\gamma-j-|\beta|} \japxin^{m-\gamma-j-|\alpha|} 
		\end{align*}
	\end{linenomath*}
	Hence, we have 
	$
	t^{1-\delta}R_j \in C([0,T];AG_{\Phi,k;\sigma}^{(m-\gamma-j)e})
	$ for $j=0,\cdots,(m-1).$
	
	\subsection{Reduction to First Order System}
	We will now reduce the operator $P$ to an equivalent first order pseudodifferential system. The procedure is similar to the one used in \cite[Section 4.2 \& 4.3]{CicoLor}. To achieve this, we introduce the change of variables $U=U(t,x)=(u_0(t,x),\cdots,u_{m-1}(t,x))^T$, where
	\begin{linenomath*}
		\begin{equation*}
			%\label{COV}
			\begin{cases}
				u_0(t,x)=\P^{m-1} \langle D_x \ran^{m-1}u(t,x), \\ 
				u_j(t,x)=\P^{m-1-j} \langle D_x \ran^{m-1-j}(\partial_t-i\lambda_j(t,x,D_x)) \cdots (\partial_t-i\lambda_1(t,x,D_x))u(t,x), \\  
			\end{cases}
		\end{equation*}
	\end{linenomath*}
	for $j=1,\cdots m-1$. Then, $Pu=f$ is equivalent to	
	\begin{linenomath*}
		\[
		(\partial_t-A_1(t,x,D_x)+A_2(t,x,D_x))U(t,x) = F(t,x),
		\]
	\end{linenomath*}
	where $F(t,x)=(0,\dots,0,f(t,x))^T$,
	\begin{linenomath*}
		\[
		A_1\left(t, x, D_{x}\right)=\left(\begin{array}{ccccc} i \lambda_{1}\left(t, x, D_{x}\right) & \P \la D_{x}\ran & 0 & \ldots & 0 \\ 0 & \ddots & \ddots & & \vdots \\ \vdots & & \ddots & \ddots & 0 \\ \vdots & & & \ddots & \P\la D_{x}\ran \\ 0 & \ldots & \ldots & 0 & i\lambda_{m}\left(t, x, D_{x}\right)\end{array}\right),
		\]
	\end{linenomath*}
	and $A_2(t,x,D_x) = \{a^{(2)}_{i,j}(t,x,D_x)\}_{1 \leq i,j \leq m}$ is a matrix of lower order terms with $a^{(2)}_{i,j} \in AG^{0,0}_{\Phi,k;\sigma}$ for $i=1, \dots,m-1$ and $j=1,\dots,m$, and $t^{1-\delta}a^{(2)}_{m,j} \in C([0,T];AG_{\Phi,k;\sigma}^{\frac{1}{\sigma}e})$ for $j=1,\dots,m$.
	
	Consider the $T(t,x,\xi)=\{\beta_{p,q}(t,x,\xi)\}_{0 \leq p,q \leq m-1}$, where
	\begin{linenomath*}
		\begin{align*}
			\beta_{p,q}(t,x,\xi) &= 0, \qquad p \geq q; \\
			\beta_{p,q}(t,x,\xi) &= \frac{(1-\varphi_1(\P\japxi))(\P \japxin)^{q-p}}{d_{p,q}(t,x,\xi)}, \qquad p < q;\\
			d_{p,q}(t,x,\xi) &= \prod_{r=p+1}^{q}i\Big(\lambda_{q+1}(t,x,\xi)-\lambda_{r}(t,x,\xi)\Big),
		\end{align*}
	\end{linenomath*}
	where $\varphi_1 \in C_0^\infty(\R)$, $\varphi_1(r)=1$ for $|r| \leq M$, for a large parameter $M$. Note that the matrix $T(t,x,\xi)$ is nilpotent. We define $H(t,x,D_x)$ and $\tilde H(t,x,D_x)$ to be pseudodifferential operators with symbols
	\begin{linenomath*}
		\begin{align}
			H(t,x,\xi) &=I+T(t,x,\xi),  \; \text{and} \nonumber \\
			\label{H}
			\tilde H(t,x,\xi) &=I+\sum_{j=1}^{m-1}(-1)^jT^j(t,x,\xi),
		\end{align}
	\end{linenomath*}
	respectively.
	\begin{prop}
		For the operators $H(t,x,D_x)$ and $\tilde H(t,x,D_x)$, the following assertions hold true
		\begin{enumerate}[label=\roman*)]
			\item $H(t,x,D_x)$ and $\tilde H(t,x,D_x)$ are in $C\Big([0,T];OPAG_{\Phi,k;\sigma}^{0,0}\Big)$.
			\item The composition $(H \circ \tilde H)(t,x,D_x)$ satisfies
			\begin{linenomath*}
				\begin{equation}\label{nilop}
					H(t,x,D_x) \circ \tilde H(t,x,D_x)=I+K(t,x,D_x)
				\end{equation}
			\end{linenomath*}
			where $K(t,x,D_x) \in C\Big([0,T];OPAG_{\Phi,k;\sigma}^{-e}\Big)$.
			\item The operator $t^{q}(\partial_tH)(t,x,D_x)$ belongs to $C\Big([0,T];OPAG_{\Phi,k;\sigma}^{0,0})\Big)$.
			\item The operator $t^{1-\delta}(\partial_tH)(t,x,D_x)$ belongs to $C\Big([0,T];OPAG_{\Phi,k;\sigma}^{\frac{1}{\sigma}e})\Big)$.
		\end{enumerate}
	\end{prop}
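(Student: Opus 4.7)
The plan is to treat the four assertions in order, with (i)-(ii) being essentially algebraic consequences of strict hyperbolicity plus standard symbolic calculus, and (iii)-(iv) requiring the zone decomposition from Section~\ref{zones}.

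For (i), I would first establish the pointwise lower bound $|d_{p,q}(t,x,\xi)| \gtrsim (\P\japxin)^{q-p}$ on $\mathrm{supp}(1-\varphi_1(\P\japxi))$. This follows from the strict hyperbolicity (\ref{roots}) applied to the $\tau_j$'s, transferred to the regularized roots $\lambda_j$ using (\ref{intg}) and choosing $M$ large enough so that on $\{\P\japxi > M\}$ the separation of characteristic roots is preserved by mollification. Combined with $\lambda_j \in C([0,T]; AG^e_{\Phi,k;\sigma})$ and the Leibniz/Faà~di~Bruno formula applied to $1/d_{p,q}$, this yields $|\partial_\xi^\alpha D_x^\beta \beta_{p,q}| \lesssim C^{|\alpha|+|\beta|} \alpha!\, \beta!^\sigma \japxin^{-|\alpha|} \P^{-|\beta|}$ on the support, giving $H, \tilde H \in C([0,T]; OPAG^{0,0}_{\Phi,k;\sigma})$. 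The cut-off $1-\varphi_1$ is what allows analytic $\xi$-estimates in the exterior.

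For (ii), the key observation is that $T$ is strictly upper triangular, hence $T^m = 0$ as an $m\times m$ matrix symbol. Thus as matrix-valued functions,
\begin{equation*}
H(t,x,\xi)\cdot \tilde H(t,x,\xi) = (I+T)\sum_{j=0}^{m-1}(-T)^j = I - (-T)^m = I.
\end{equation*}
The composition of pseudodifferential operators then gives
\begin{equation*}
H\circ \tilde H = (H \cdot \tilde H)(x,D_x) + \sum_{|\alpha|\geq 1}\frac{1}{\alpha!}(\partial_\xi^\alpha H)(D_x^\alpha \tilde H)(x,D_x) = I + K(t,x,D_x),
\end{equation*}
where the asymptotic sum on the right starts with $|\alpha|=1$ and hence has symbol in $AG^{-e}_{\Phi,k;\sigma}$ by the calculus developed in Appendices II and III.

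For (iii)-(iv), I would treat the two zones $\pd$ and $\hyp$ separately. The only $t$-dependence of $H$ is through $d_{p,q}$, so $\partial_t \beta_{p,q} = -(1-\varphi_1(\P\japxi))(\P\japxin)^{q-p}\,d_{p,q}^{-2}\,\partial_t d_{p,q}$. On $\hyp$, (\ref{notintg}) gives $t^q \partial_t \lambda_j \in C([0,T]; AG^e_{\Phi,k;\sigma})$, so $t^q \partial_t d_{p,q} \in AG^{(q-p)e}$, and the lower bound on $d_{p,q}$ yields $t^q \partial_t \beta_{p,q} \in AG^{0,0}$. On $\pd$ we use $t^q \leq N h(x,\xi) = N(\P\japxin)^{-1}$ and the crude bound $\partial_t \lambda_j \in AG^{2e}$ from (\ref{intg}), which yields $|\partial_t \beta_{p,q}|\lesssim \P\japxin$ and therefore $t^q|\partial_t \beta_{p,q}|\lesssim N$, again placing the result in $AG^{0,0}$. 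This proves (iii). For (iv), write $t^{1-\delta}\partial_t\beta_{p,q} = t^{1-\delta - q}\cdot t^q \partial_t\beta_{p,q}$. Using $1/\sigma = (q-1+\delta)/q$ from (\ref{delta}), one has $1-\delta-q = -q/\sigma$. On $\hyp$, the definition (\ref{zone2}) gives $t^{-q/\sigma}\leq N^{-1/\sigma}(\P\japxin)^{1/\sigma}$, while on $\pd$ the bound $t^{1-\delta}\leq N^\gamma(\P\japxin)^{-\gamma}$ from (\ref{zone1}) combined with $|\partial_t\beta_{p,q}|\lesssim \P\japxin$ gives $t^{1-\delta}|\partial_t \beta_{p,q}|\lesssim N^\gamma(\P\japxin)^{1-\gamma} = N^\gamma(\P\japxin)^{1/\sigma}$. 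In both zones the resulting bound is $(\P\japxin)^{1/\sigma}$, so after checking derivatives analogously one concludes $t^{1-\delta}(\partial_t H) \in C([0,T]; OPAG^{e/\sigma}_{\Phi,k;\sigma})$.

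The main obstacle will be the Gevrey bookkeeping for the derivatives of $\beta_{p,q}$ and $\partial_t\beta_{p,q}$ — one must track factorials of type $\alpha!$ (analytic in $\xi$ on the exterior $\{\P\japxi > M\}$) and $\beta!^\sigma$ (Gevrey in $x$) through the quotient rule and the nested application of Faà~di~Bruno, and verify that both the zone estimates paste together into a single symbol class on all of $[0,T]\times\R^{2n}$. The algebraic part (nilpotency, strict hyperbolicity) is clean; the analytic core lies in verifying that the zone-by-zone estimates are uniform and produce the claimed symbol orders without loss of Gevrey regularity.
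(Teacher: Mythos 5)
The paper states this proposition without an inline proof (it points to the analogous construction in Cicognani--Lorenz, Section 4.2--4.3), so there is no in-paper argument to compare against word for word. Your reconstruction is correct in structure and uses exactly the tools the surrounding text makes available: for (i), the strict-hyperbolicity separation transferred to the mollified roots plus the cut-off $1-\varphi_1$ yields $|d_{p,q}|\gtrsim(\P\japxin)^{q-p}$, and Leibniz/Fa\`a di Bruno bookkeeping puts $\beta_{p,q}$ in $AG^{0,0}_{\Phi,k;\sigma}$; for (ii), the nilpotency $T^m=0$ (since $T$ is strictly upper triangular of size $m$) makes the symbol-level product $H\cdot\tilde H=I$ exactly, so the full composition is $I$ plus the $|\alpha|\geq 1$ terms of the asymptotic expansion, which drop one order in each variable, giving $K\in OPAG^{-e}_{\Phi,k;\sigma}$; for (iii) and (iv) the split across $\pd$ and $\hyp$, using $\partial_t\lambda_j\in AG^{2e}$ from (\ref{intg}) in the interior zone and $t^q\partial_t\lambda_j\in AG^{e}$ from (\ref{notintg}) in the exterior zone, together with the identity $1-\delta-q=-q/\sigma$ from (\ref{delta}), lands both zone-estimates on the same order $(\P\japxin)^{1/\sigma}$. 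This mirrors precisely how the paper handles the remainder $R_j$ in Section \ref{factr}.

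Two small points you should tighten if you write this out in full. First, in (ii) the expansion $H\circ\tilde H = (H\cdot\tilde H)(x,D_x)+\sum_{|\alpha|\geq 1}\tfrac{1}{\alpha!}(\partial_\xi^\alpha H)(D_x^\alpha\tilde H)(x,D_x)$ is only an asymptotic identity modulo a $(\Phi,\theta)$-regularizing remainder (Theorem~\ref{comp} in the $OPAG_{\Phi,k;\sigma}$ calculus of Appendix~III); since that remainder lies in every $AG$ class, this does not affect the conclusion, but the formula as written is not literally exact. Second, the cut-off in the definition of $\beta_{p,q}$ is $1-\varphi_1(\P\japxi)$ (with $\japxi$, not $\japxin$), so when invoking the analytic-in-$\xi$ estimate on exterior domains one should note that for $M$ large this support condition is compatible with the $\P\japxin\geq B|\alpha|^\sigma$ condition appearing in the definition of $AG^{m_1,m_2}_{\Phi,k;\sigma}$; this is routine but worth a sentence.
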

	Note that $I + K(t,x,D_x)$ is invertible for sufficiently large $M$. Since $K(t,x,D_x) \in C\Big([0,T];OPAG_{\Phi,k;\sigma}^{-e}\Big)$, we can estimate the operator norm of $K$ by $CM^{-1}$ where $M$ is as in the definition of $H(t,x,D_x)$. We choose $M$ sufficiently large so that the operator norm of $K(t,x,D_x)$ is strictly lesser than $1$. This implies that $I+K(t,x,D_x)$ is invertible and 
	\begin{linenomath*}
		\[
		(I+K(t,x,D_x))^{-1} = \sum_{j=0}^{\infty} (-K(t,x,D_x))^j \in C\Big([0,T];OPAG_{\Phi;\sigma}^{0,0}\Big).
		\]
	\end{linenomath*}
	We perform the described change of variable by setting
	\begin{linenomath*}
		\[\hat{U}=\hat{U}(t,x) = \tilde H (t,x,D_x)U(t,x).\]
	\end{linenomath*} 
	The above equation and (\ref{nilop}) imply that 
	$$
	U(t,x) = (I+K(t,x,D_x))^{-1}H(t,x,D_x)\hat{U}(t,x).
	$$
	Noting this fact, we obtain (similar to \cite[Section 4.3]{CicoLor}) the first order system equivalent to (\ref{eq1}) :
	\begin{linenomath*}
		\[
		(\partial_t-A_3(t,x,D_x)+A_4(t,x,D_x)) \hat{U}(t,x) = F_1(t,x),
		\]
	\end{linenomath*}
	where $F_1(t,x)=\mathcal{K}(t,x,D_x)^{-1}\tilde H(t,x,D_x)\mathcal{K}(t,x,D_x)F(t,x)$ for $\mathcal{K}(t,x,D_x) = (I+K(t,x,D_x))$ and the operators $A_3$ and $A_4$ are as follows
	\begin{linenomath*}
		\begin{align*}
			A_3 &= \mathcal{K}^{-1} \tilde H \mathcal{K} A_1\mathcal{K}^{-1} H,\\
			A_4 &=  \mathcal{K}^{-1} \tilde H \mathcal{K} A_2 \mathcal{K}^{-1} H + \mathcal{K}^{-1} \tilde H \mathcal{K} (\partial_t\mathcal{K}^{-1}) H+ \mathcal{K}^{-1} \tilde H \partial_t H.
		\end{align*}
	\end{linenomath*}
	We can write 
	\begin{linenomath*}
		\begin{align*}
			A_3(t,x,D_x) &= \mathcal{D}(t,x,D_x)  + \tilde A_3(t,x,D_x) \\
			A(t,x,D_x)  & = A_4(t,x,D_x) - \tilde A_3(t,x,D_x)
		\end{align*}
	\end{linenomath*}
	where $\mathcal{D}=\text{diag}(i\lambda_1(t,x,D_x),\ldots,i\lambda_m(t,x,D_x))$, and  $A(t,x,D_x)$ contains the lower order terms whose symbol is such that 
	\begin{linenomath*} 
		\[
		t^{1-\delta}A \in C([0,T];{AG}_{\Phi,k;\sigma}^{\frac{1}{\sigma}e}).
		\]
	\end{linenomath*}
	Then, $Pu = f$ is equivalent to
	\begin{linenomath*}
		\begin{equation}
			\label{FOS1}
			L_1\hat{U}=(\partial_t - \mathcal{D}+A)\hat{U}= F_1(t,x).
		\end{equation}
	\end{linenomath*}
	
	We prove the a priori estimate (\ref{est2}) by proving that
	\begin{linenomath*}
		\begin{equation*}        
			\Vert \hat{U}(t)\Vert_{\Phi,k;s,\Lambda(t),\sigma} \leq C\Big(\Vert \hat{U}(0)\Vert_{\Phi,k;s,\Lambda(0),\sigma} + \int_{0}^{t} \Vert F_1(\tau,\cdot)\Vert_{\Phi,k;s,\Lambda(\tau),\sigma} d\tau\Big),
		\end{equation*}
	\end{linenomath*}
	where $\Lambda(t)=\frac{\lambda}{\delta}(T^\delta-t^\delta)$ for sufficiently large $\lambda$. 
	
	It is sufficient to prove the above estimate for $s=(0,0)$ since the operator $L_2=\P^{s_2}\la D_x \ran^{s_1} L_1\la D_x \ran^{-s_1}\P^{-s_2}$ satisfies the same hypotheses as $L_1$. That is, for $V(t,x)=\P^{s_2}\la D_x \ran^{s_1}\hat{U}(t,x)$, $L_1\hat{U}=F_1$ implies $L_2V=F_2$ where $F_2(t,x)=\P^{s_2}\la D_x \ran^{s_1}F_1(t,x)$. So, assuming $s=(0,0)$ we let $L_2=L_1= \partial_t-\mathcal{D}+A$.
	
	To deal with the low-regularity in $t$, we introduce the following change of variable
	\begin{linenomath*}
		\begin{equation}
			\label{cv}
			W(t,x) = e^{\Lambda(t)(\P\la D_x\ran)^{1/\sigma}} V(t,x).
		\end{equation}
	\end{linenomath*}
	This implies that $V(t,x)= (I+\tilde R(t,x,D_x))^{-1}e^{-\Lambda(t)(\P\la D_x\ran)^{1/\sigma}}W(t,x)$
	where $I+\tilde R(t,x,D_x)$ is an invertible operator as in Corollary \ref{cor1}. Let us denote $I+\tilde R(t,x,D_x)$ and $e^{\pm\Lambda(t)(\P\la D_x\ran)^{1/\sigma}}$ by $\mathcal{R}(t,x,D_x)$ and $E^{(\pm)}(t,x,D_x)$, respectively. Then $Pu = f $ is equivalent to $L_3W=F_3$ where
	\[
	L_3 = \partial_t-\mathcal{D} + \Big(B+\frac{\lambda}{t^{1-\delta}}(\P\la D_x\ran)^{1/\sigma}\Big),
	\]
	$F_3(t,x) = \mathcal{R}^{-1}E^{(+)}\mathcal{R} F_2(t,x)$ and the operator $B(t,x,D_x)$ is given by
	\[
	B = \mathcal{R}^{-1}E^{(+)} \Big( \mathcal{R}(\partial_t \mathcal{R}^{-1}) + \mathcal{R} A\mathcal{R}^{-1} \Big) E^{(-)} - \big( \mathcal{R}^{-1}E^{(+)}\mathcal{R} \mathcal{D}\mathcal{R}^{-1}E^{(-)} - \mathcal{D}  \big).
	\]
	Observe that from Theorem \ref{conju} and from the Cauchy data given in conditions (ii) and (iii) of Theorem \ref{result2}, we need 
	$\Lambda(t) < \Lambda^*=\min\{\Lambda_0,\Lambda_1,\Lambda_2\}.$
	This implies $T < \big(\frac{\delta}{\lambda} \Lambda^*\big)^{1/\delta}.$ Then, we have $t^{1-\delta}B \in C([0,T];{OPAG}_{\Phi,k,\sigma;\sigma,\sigma}^{\frac{1}{\sigma}e})$.
	The estimate (\ref{est2}) on the solution $u$ can be established by proving that the function $W(t,x)$
	satisfies the a priori estimate
	\begin{linenomath*}
		\begin{equation}
			\label{est3}
			\Vert W(t) \Vert^2_{L^2} \leq C \Big(\Vert W(0) \Vert^2_{L^2} + \int_{0}^{t}\Vert F_3(\tau,\cdot)\Vert_{L^2}d\tau \Big) , \quad t\in[0,T], \; C>0.
		\end{equation}   
	\end{linenomath*}
	
	\subsection{Well-posedness of the First Order System}\label{sgi}	
	Observe that we have $ L_3W=F_3$ whenever $L_1\hat{U}=F_1$ and
	$
	\Vert W(t)\Vert_{L^2}=\Vert \hat{U}(t) \Vert_{\Phi,k;s,\Lambda(t),\sigma}.
	$
	Moreover, the problem $L_3W=F_3$ is equivalent to an auxiliary problem
	\begin{linenomath*}
		\begin{equation*}
			\partial_tW=\mathcal{D}W-\Big(\frac{\lambda}{t^{1-\delta}}(\P\la D_x\ran)^{1/\sigma}W+BW\Big) + F_3(t,x),
		\end{equation*}
	\end{linenomath*}
	for $(t,x) \in (0,T] \times \R^n$, with initial conditions
	\begin{linenomath*}
		\[
		W(0,x)=(w_0(x),\dots,w_{m-1}(x))^T, \quad \text{where}
		\]
	\end{linenomath*}    
	\begin{linenomath*}
		\begin{align*}
			w_j(x) = &e^{\Lambda(0)(\P\la D_x\ran)^{1/\sigma}}\P^{s_2} \langle D_x \rangle^{s_1} \tilde H(0,x,D_x)
			\P^{m-1-j} \langle D_x \rangle^{m-1-j}\\
			&\times (\partial_t-i\lambda_j(0,x,D_x)) \cdots (\partial_t-i\lambda_1(0,x,D_x))u(0,x)   	
		\end{align*}
	\end{linenomath*}
	for $j=0,\dots,m-1$.	
	To prove (\ref{est3}), let us consider 
	\begin{linenomath*}
		\begin{align}
			\partial_t\Vert W(t) \Vert^2_{L^2} & = 2\Re\langle \partial_tW,W \rangle_{L^2} \nonumber\\	
			&= 2\Re\langle\mathcal{D}W,W \rangle_{L^2}-2\Re\Big\langle\Big(\frac{\lambda}{t^{1-\delta}}(\P\la D_x\ran)^{1/\sigma}+B\Big)W,W \Big\rangle_{L^2} \nonumber \\
			\label{fin1}
			& \quad + 2\Re\la F_3,W\ra.
		\end{align}	
	\end{linenomath*}
	Since $\mathcal{D}$ is diagonal with purely imaginary entries, we have
	\begin{linenomath*}
		\begin{equation}
			\label{neg1}
			\Re\langle\mathcal{D}W,W \rangle_{L^2}  \leq C_1 \Vert W(t)\Vert_{L^2} .
		\end{equation}
	\end{linenomath*}	
	Also, note that $t^{1-\delta}h(x,D_x)^{1/\sigma} B(t,x,D_x) \in C([0,T];{OPAG}_{\Phi,\sigma;\sigma,\sigma}^{0,0})$. We choose $\lambda$ sufficiently large so that we can apply sharp G\r{a}rding inequality, see \cite[Theorem 18.6.14]{Horm}, for the metric $\tilde{g}_{\Phi,k}$ given in (\ref{om2}) with the Planck function $\tilde{h}(x,\xi)=(\P\japxin)^{\frac{1}{\sigma}-\gamma}$. It is important to note that the application of sharp G\r{a}rding inequality requires $ \sigma \geq 3.$ This yields
	\begin{linenomath*}
		\begin{equation}
			\label{neg2}
			\Re\Big\langle\Big(\frac{\lambda}{t^{1-\delta}}(\P\la D_x\ran)^{1/\sigma}+B\Big)W,W \Big\rangle_{L^2} \geq -C_2 \Vert W \Vert_{L^2}, \quad C_2>0.
		\end{equation}
	\end{linenomath*}
	From (\ref{fin1}), (\ref{neg1}) and (\ref{neg2}) we have 
	\begin{linenomath*}
		\[
		\frac{d}{dt}\Vert W(t) \Vert^2_{L^2} \leq C \Vert W(t)\Vert_{L^2} + C \Vert F_3(t,\cdot)\Vert_{L^2}. 
		\]
	\end{linenomath*}
	
	Considering the above inequality as a differential inequality, we apply Gronwall's lemma and obtain that
	\begin{linenomath*}
		\[
		\Vert W(t)\Vert_{L^2}^2\leq C \Vert W(0)\Vert^2_{L^2} + C \int_{0}^{t} \Vert F_3(\tau,\cdot)\Vert^2_{L^2} d\tau.
		\]	
	\end{linenomath*}
	This proves the well-posedness of the auxiliary Cauchy problem. Note that the solution $\hat{U}$ to (\ref{FOS1}) belongs to $C([0,T];H^{s,\Lambda(t),\sigma}_{\Phi,k})$. Returning to our original solution $u=u(t,x)$ we obtain the estimate (\ref{est2}) with 
	\begin{linenomath*}
		\[
		u \in \bigcap\limits_{j=0}^{m-1}C^{m-1-j}([0,T];H^{s+ej,\Lambda(t),\sigma}_{\Phi,k}).
		\]	
	\end{linenomath*}	
	This concludes the proof.	
	
	Next, we use the a priori estimate derived in this section to obtain a cone condition for the Cauchy problem.

		\section{Cone Condition}\label{cone}
	Let $K(x^0,t^0)$ denote the cone with the vertex $(x^0,t^0)$:
	\begin{linenomath*}
		\[
		K(x^0,t^0)= \{(t,x) \in [0,T] \times \R^n : |x-x^0| \leq c\P(t-t^0)\},
		\]
	\end{linenomath*}
	where $c>0$. The cone $K(x^{0}, t^{0})$ has a slope $c\P$ which governs the speed of the growth of the cone. Note that the speed is anisotropic, that is, it varies with $x$. For a given $(t,x)$, it is well known in the literature (see \cite[Section 3.11]{yag}) that the influence of the vertex of cone is carried farther by the dominating characteristic root of the principal symbol of the operator $P$ in (\ref{eq1}). So, the constant $c$ is determined by the characteristic roots as 
	\begin{linenomath*}
		\begin{equation}
			\label{speed}
			c= \sup\Big\{|\tau_k(t,x,\xi)|\P^{-1} : k=1,\dots,m\Big\},
		\end{equation}
	\end{linenomath*}
	where supremum is taken over the set $\{(t,x,\xi) \in[0,T] \times \R^n_x \times \R^n_\xi:|\xi|=1\}$.
	For example, in case of a second order strictly hyperbolic operator given in (\ref{model}), we have 
	\begin{linenomath*}
		\[
		c= \sup_{t\in[0,T],x \in \R^n}\sqrt{a(t,x)}\P^{-1}.
		\]
	\end{linenomath*}
	Note that the speed of growth of the cone increases as $|x|$ increases since $\Phi$ is monotone increasing function of $|x|$. In the following we prove the cone condition for the Cauchy problem $(\ref{eq1})$.
	
	\begin{prop}
		The Cauchy problem (\ref{eq1}) has a cone dependence, that is, if 
		\begin{linenomath*}
			\begin{equation}\label{cone1}
				f\big|_{K(x^0,t^0)}=0, \quad f_i\big|_{K(x^0,t^0) \cap \{t=0\}}=0, \; i=1, \dots, m
			\end{equation}
		\end{linenomath*}
		then
		\begin{linenomath*}
			\begin{equation}\label{cone2}
				u\big|_{K(x^0,t^0)}=0,
			\end{equation}
		\end{linenomath*}
		provided that $c$ is as in (\ref{speed}).
	\end{prop}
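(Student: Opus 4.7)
The plan is to localize the energy argument of Section 6.3 to the cone $K(x^0,t^0)$. I would work with the first-order system $L_3 W = F_3$ constructed in Section 6.2 and introduce the localized energy
\[
E(t)=\int_{\Omega(t)} |W(t,x)|^2\,dx,
\]
where $\Omega(t)=\{x\in\R^n : |x-x^0|\leq c\P(t^0-t)\}$ is the time-$t$ slice of the (backward) cone. Under the hypotheses (6.4), and after tracking the vanishing of the data through the sequence of reductions $u\mapsto U\mapsto \hat U\mapsto V\mapsto W$, we would have $E(0)=0$ on the base of the cone and $F_3\equiv 0$ on its interior (modulo negligible exponential tails from the infinite order conjugation, to be handled separately). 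A Gronwall argument applied to a differential inequality for $E(t)$ would then yield $E(t)\equiv 0$ on $[0,t^0]$, and inverting the transformations would give $u\equiv 0$ on $K(x^0,t^0)$.

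The key steps are: (i) replace the characteristic function $\chi_{\Omega(t)}$ by a smooth cutoff $\chi_\epsilon(t,x)$ supported in a slight thickening of the cone, so that $\partial_t\chi_\epsilon\leq 0$ in the lateral boundary layer and $|\nabla_x\chi_\epsilon|$ is concentrated there; (ii) compute $\tfrac{d}{dt}\int\chi_\epsilon |W|^2\,dx$ using $\partial_t W=\mathcal{D}W-\bigl(\lambda t^{\delta-1}(\P\la D\ran)^{1/\sigma}+B\bigr)W+F_3$; (iii) handle the bulk contribution by the sharp G\r{a}rding inequality exactly as in Section 6.3; and (iv) show that the boundary-layer contribution is non-positive, thanks to the choice of slope $c$ in (6.1). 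The pointwise bound $|\tau_k(t,x,\xi)|\leq c\,\P|\xi|$ translates, symbolically, into a favourable sign for the lateral flux: it ensures that the outer conormal of $\partial\Omega(t)$ is uniformly non-characteristic for the principal symbol of $\mathcal{D}$.

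The principal obstacle is the non-locality of $\mathcal{D}$ and, more severely, of the infinite order conjugation $e^{\Lambda(t)(\P\la D\ran)^{1/\sigma}}$. Integration by parts against $\chi_\epsilon$ does not produce a pointwise flux on $\partial\Omega(t)$; instead one must analyze commutators such as $[\chi_\epsilon,\mathcal{D}]$ and $[\chi_\epsilon,e^{\Lambda(\P\la D\ran)^{1/\sigma}}]$ using the symbolic calculus of Section 3 and Appendix II. I would bound the commutator $[\chi_\epsilon,\mathcal{D}]$ by noting that its principal symbol is $-i\{\chi_\epsilon,i\lambda_j\}=\partial_{\xi_k}\lambda_j\,\partial_{x_k}\chi_\epsilon$, which is dominated in absolute value by $c\,\P|\nabla_x\chi_\epsilon|$; this is precisely what is needed to cancel, by sharp G\r{a}rding, the positive contribution arising from $\partial_t\chi_\epsilon$, which by the geometry of the cone has magnitude $c\P|\nabla_x\chi_\epsilon|$. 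The commutator with the infinite order operator is more delicate, but the exponential decay of its symbol outside a thin $\tilde{g}_{\Phi,k}$-ball (Theorem 4.1) should allow the contribution to be absorbed after passing to the limit $\epsilon\to 0^+$, up to a residual operator mapping ${\mathcal{M}^\sigma_{\Phi,k}}'$ to $\mathcal{M}^\sigma_{\Phi,k}$ which is harmless for the vanishing argument.

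Once the differential inequality $E'(t)\leq C E(t)$ is established on $(0,t^0]$ with $E(0)=0$, Gronwall gives $E\equiv 0$, hence $W\equiv 0$ on the transformed cone. Undoing the transformations (each local modulo the same negligible tails) recovers the conclusion (6.5).
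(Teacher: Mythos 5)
The paper's proof is by \emph{time--shift approximation}: it sets $P_\varepsilon(t,x,\partial_t,D_x)=P(t+\varepsilon,x,\partial_t,D_x)$, which has coefficients regular in $t$ near $0$ (the singularity of $\partial_t a_{j,\alpha}$ is pushed to $t=-\varepsilon<0$), so the approximating solutions $v_\varepsilon$ vanish on $K(x^0,t^0)$ by the standard finite propagation speed for Lipschitz hyperbolic operators. It then shows $v_{\varepsilon_k}\to u$ using the a~priori estimate together with a Taylor estimate on $a_{j,\alpha}(\tau+\varepsilon_1,x)-a_{j,\alpha}(\tau+\varepsilon_2,x)$ (the $E_q$ function), and concludes $u=0$ on the cone in the limit. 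Your proposal goes a completely different route, and there is a genuine gap in it.

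The decisive problem is that you perform the localization \emph{after} conjugating by $e^{\Lambda(t)(\P\la D_x\ran)^{1/\sigma}}$, and that conjugation is not pseudo-local in any usable sense. First, the forward tracking fails: even if $f$ and the initial data vanish on $K(x^0,t^0)$, the transformed source $F_3=\mathcal{R}^{-1}E^{(+)}\mathcal{R}F_2$ and the transformed Cauchy data for $W$ do \emph{not} vanish on the corresponding time--slices, because $E^{(+)}=e^{\Lambda(t)(\P\la D_x\ran)^{1/\sigma}}$ spreads supports. Calling the resulting nonzero terms ``negligible exponential tails'' is not correct: the Schwartz kernel of $e^{\Lambda(\P\japxin)^{1/\sigma}}$ decays only sub-exponentially off the diagonal, while the expected loss of derivatives in this problem is itself of sub-exponential order, so these tails are exactly of the size that matters. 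Theorem~\ref{conju}, which you invoke, is a conjugation theorem controlling $e^{\Lambda(\cdot)}p\,e^{-\Lambda(\cdot)}$; it is not a kernel-decay or commutator estimate for $[\chi_\epsilon,e^{\Lambda(\P\la D\ran)^{1/\sigma}}]$, and no such estimate with the needed support properties is available.

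Second, and independently, the final ``undoing the transformations'' step cannot be justified. If you had established $W\big|_{K}=0$, you would still have $V=(I+\tilde R)^{-1}e^{-\Lambda(t)(\P\la D_x\ran)^{1/\sigma}}W$ with a non-local operator, so $V$ (and hence $u$) need not vanish on the cone. Support information is not preserved under the inverse conjugation any more than under the forward one. This is precisely why the paper proves the cone condition at the level of the \emph{original} equation and imports finite propagation speed from the regularized problems $P_\varepsilon$, where it holds by classical theory; the energy-estimate machinery with the infinite order conjugation is then used only to pass to the limit $\varepsilon\to 0$. If you want to salvage a direct localized-energy argument, you would have to carry it out before any conjugation, working with a weighted $L^2$-energy for the first-order system $L_1\hat U=F_1$ and absorbing the $t^{-(1-\delta)}$-singular lower order terms by a $t$-dependent weight instead of a $\xi$-dependent one; this is a different and substantially harder argument than the one you sketched.
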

	\begin{proof}
		Consider $t^0>0$, $c>0$ and assume that  (\ref{cone1}) holds. We define a set of operators $P_\varepsilon(t,x,\partial_t,D_x), 0 \leq \varepsilon \leq \varepsilon_0$ by means of the operator $P(t,x,\partial_t,D_x)$ in (\ref{eq1}) as follows
		\begin{linenomath*}
			\[
			P_\varepsilon(t,x,\partial_t,D_x) = P(t+\varepsilon,x,\partial_t,D_x), \: t \in [0,T-\varepsilon_0], x \in \R^n,
			\]
		\end{linenomath*}
		and $\varepsilon_0 < T-t^0$, for a fixed and sufficiently small $\varepsilon_0$. For these operators we consider Cauchy problems
		\begin{linenomath*}
			\begin{alignat}{2}
				P_\varepsilon v_\varepsilon & =f,  &&  t \in [0,T-\varepsilon_0], \; x \in \R^n\\
				\partial_t^{k-1}v_\varepsilon(0,x)& =f_k(x),\qquad && k=1,\dots,m.
			\end{alignat}
		\end{linenomath*}
		Note that $v_\varepsilon(t,x)=0$ in $K(x^0,t^0)$ and $v_\varepsilon$ satisfies the a priori estimate (\ref{est2}) for all $t \in[0,T-\varepsilon_0]$. Further, we have 
		\begin{linenomath*}
			\begin{alignat}{2}
				P_{\varepsilon_1} (v_{\varepsilon_1}-v_{\varepsilon_2}) & = (P_{\varepsilon_2}-P_{\varepsilon_1})v_{\varepsilon_2},\qquad  &&  t \in [0,T-\varepsilon_0], \; x \in \R^n\\
				\partial_t^{k-1}(v_{\varepsilon_1}-v_{\varepsilon_2})(0,x)& = 0,\qquad && k=1,\dots,m.
			\end{alignat}
		\end{linenomath*}
		For the sake of simplicity we denote $b_{j,\alpha}(t,x)$, the coefficients of lower order terms, as $a_{j,\alpha}$ for $j+|\alpha|<m$. Substituting $s-e$ for $s$ in the a priori estimate, we obtain
		\begin{linenomath*}
			\begin{equation}\label{cone3}
				\begin{aligned}
					&\sum_{j=0}^{m-1} \Vert  \partial_t^j(v_{\varepsilon_1}-v_{\varepsilon_2})(t,\cdot) \Vert_{\Phi,k;s+(m-2-j)e,\Lambda(t),\sigma} \\
					&\leq C \int_{0}^{t}\Vert (P_{\varepsilon_2}-P_{\varepsilon_1})v_{\varepsilon_2}(\tau,\cdot)\Vert_{\Phi,k;s-e,\Lambda(\tau),\sigma}\;d\tau\\
					&\leq C \int_{0}^{t} \sum\limits_{\substack{j+|\alpha|=m \\ j<m}}\Vert (a_{j,\alpha}(\tau+\varepsilon_1,x) - a_{j,\alpha}(\tau+\varepsilon_2,x))\partial_t^jD_x^\alpha v_{\varepsilon_2}(\tau,\cdot)\Vert_{\Phi,k;s-e,\Lambda(\tau),\sigma}\;d\tau	.
				\end{aligned}
			\end{equation}
		\end{linenomath*}
		Using the Taylor series approximation, we have
		\begin{linenomath*}
			\begin{align*}
				|a_{j,\alpha}(\tau+\varepsilon_1,x) - a_{j,\alpha}(\tau+\varepsilon_2,x)| &= \Big|\int_{\tau+\varepsilon_2}^{\tau+\varepsilon_1} (\partial_ta_{j,\alpha})(r,x)dr \Big|\\
				&\leq \P^{m-j} \Big|\int_{\tau+\varepsilon_2}^{\tau+\varepsilon_1}\frac{dr}{r^q}\Big|\\
				&\leq \P^{m-j}|E_q(\tau,\varepsilon_1,\varepsilon_2)|,
			\end{align*}
		\end{linenomath*}
		where
		\begin{linenomath*}
			\[
			E_q(\tau,\varepsilon_1,\varepsilon_2) =
			\left\{
			\begin{array}{ll}
				\log \Bigg(1+\frac{\varepsilon_1-\varepsilon_2}{\tau+\varepsilon_2}\Bigg)  & \mbox{if } q = 1 \\
				\frac{1}{q-1} \frac{(\tau+\varepsilon_1)^{q-1}-(\tau+\varepsilon_2)^{q-1}}{((\tau+\varepsilon_2)(\tau+\varepsilon_1))^{q-1}} & \mbox{if } q \in \big(1,\frac{3}{2}\big).
			\end{array}
			\right.
			\]
		\end{linenomath*}
		Note that $E_q(\tau,\varepsilon,\varepsilon)=0$.
		Then right-hand side of the inequality in (\ref{cone3}) is dominated by
		\begin{linenomath*}
			\begin{equation*}\label{cone4}
				C \int_{0}^{t} |E_q(\tau,\varepsilon_1,\varepsilon_2)|\sum_{j=0}^{m-1} \Vert (\partial_t^j v_{\varepsilon_2})(\tau,\cdot)\Vert_{\Phi,k;s+(m-1-j)e,\Lambda(\tau),\sigma}\;d\tau,
			\end{equation*}
		\end{linenomath*}
		where $C$ is independent of $\varepsilon$. By definition, $E_q$ is $L_1$-integrable in $\tau$.
		
		The sequence $v_{\varepsilon_k}$, $k=1,2,\dots$ corresponding to the sequence $\varepsilon_k \to 0$ is in the space
		\begin{linenomath*}
			\[
			\bigcap\limits_{j=0}^{m-1}C^{m-1-j}\Big([0,T^*];H^{s+(m-2-j)e,\Lambda(t),\sigma}_{\Phi,k}\Big), \quad T^*>0
			\]
		\end{linenomath*}
		and $u=\lim\limits_{k\to\infty}v_{\varepsilon_k}$ in the above space and hence, in $\mathcal{D}'(K(x^0,t^0))$. In particular,
		\begin{linenomath*}
			\[
			\la u,\varphi\ra = \lim\limits_{k\to\infty} \la v_{\varepsilon_k}, \varphi \ra =0,\; \forall \varphi \in \mathcal{D}(K(x^0,t^0)).
			\]
		\end{linenomath*}
		gives (\ref{cone2}) and completes the theorem. 
	\end{proof}
	
	\section*{Acknowledgements}
	The first author is funded by the University Grants Commission, Government of India, under its JRF and SRF schemes. The authors thank the anonymous referee for carefully reading the paper. We especially acknowledge the referee's point about the invertability of operators $I+K$ and $I+R$ (which appear in diagonalization procedure and change of variables, respectively) and for suggesting the use of calculus with $\japxin$, $k$ large, instead of $\japxi$.

	\addcontentsline{toc}{section}{ Appendix I: Lattice Structure for the Class of Metrics $(g_{\Phi,k})$}
			\section*{Appendix I: Lattice Structure for the Class of Metrics $(g_{\Phi,k})$}
	In this section, we define a lattice structure for the class of metrics of the form $g_{\Phi,k}$ defined as in (\ref{om}).
	Let $\mathcal{A}$ be a class of positive continuous functions that satisfy (\ref{sl})-(\ref{scale}). We endow $\mathcal{A}$ with a lattice structure by defining a partial order $\lesssim$ as $\Phi_1 \lesssim \Phi_2$ if for some $c>0$, $\Phi_1(x) \leq c\Phi_2(x), \forall x \in \R^n$ for $\Phi_1,\Phi_2 \in \mathcal{A}$. We say that $\Phi_1$ is equivalent to $\Phi_2$, denote it by $\Phi_1 \sim \Phi_2$, when the ratio $\Phi_1(x)/\Phi_2(x)$ is bounded both from above and below. Here the join($\vee$) and meet($\wedge$) operations are defined as $\Phi_1 \vee \Phi_2(x) \sim \max\{\Phi_1(x), \Phi_2(x)\}$ and $\Phi_1 \wedge \Phi_2(x) \sim \min\{\Phi_1(x), \Phi_2(x)\}$ respectively.
	
	The slowly varying nature of the function $\Phi \in \mathcal{A}$ allows us to introduce some partition of unity related to the corresponding metric $g_{\Phi,k}$, see  \cite[Theorem 2.2.7]{Lern}. This allows us to find a smooth $\tilde{\Phi}$ such that  $C^{-1}\Phi(x)\leq \tilde{\Phi}(x) \leq C \Phi(x)$ where $C$ is as in (\ref{sv}). The slowly varying nature of $\Phi$ dictates the precision of approximation $\tilde{\Phi}$ to $\Phi$. It is important to note that, in general, $\tilde{\Phi}$ does not belong to the class $\mathcal{A}$ but $C\tilde{\Phi} \in \mathcal{A}$ and $\Phi \sim C\tilde{\Phi}$. One consequence of this equivalence is that we can assume without loss of generality that the elements of $\mathcal{A}$ are smooth functions. 
	
	Let $\mathcal{B}$ be a class of $2n \times 2n$ symmetric positive definite matrices $Q_{\Phi}$ of the form
	\begin{linenomath*}
		\[
		Q_{\Phi}=\left(\begin{array}{@{}c|c@{}}
			\Phi(x)^{-2}I_n
			& \bigzero \\
			\hline
			\bigzero &
			\japxin^{-2}I_n
		\end{array}\right).
		\]
	\end{linenomath*}
	where $\Phi \in \mathcal{A}$ and $I_n$ is the identity matrix of size $n$. The quadratic form associated with $Q_{\Phi}$ is given by
	\begin{linenomath*}
		\[
		Q_{\Phi}(Y) := \la Q_{\Phi} Y,Y\ra = \frac{|y|^2}{\Phi(x)^2}+\frac{|\eta|^2}{\japxin^2}, \quad Y=(y,\eta) \in \R^{2n}.
		\]
	\end{linenomath*}
	Note that $\mathcal{B}$ has a natural lattice structure with a partial order relation defined by $Q_{\Phi_1} \lesssim Q_{\Phi_2}$ if for some $c>0$, $Q_{\Phi_1}(Y) \leq c Q_{\Phi_2}(Y)$ for all $Y\in \R^{2n}$. In similar fashion we can define the operations $\gtrsim$ and $\sim$. In this case the join and meet operations can be defined as $(Q_{\Phi_1} \vee Q_{\Phi_2})(Y) := Q_{\Phi_1\wedge\Phi_2}(Y)$ and $(Q_{\Phi_1} \wedge Q_{\Phi_2})(Y) := Q_{\Phi_1\vee\Phi_2}(Y)$.
	
	The natural map $T :\mathcal{A} \to \mathcal{B}$ defined by $ T(\Phi)=Q_\Phi$ defines an order-reversing isomorphism from $\mathcal{A}$ onto $\mathcal{B}$, i.e., $\Phi_1 \lesssim \Phi_2$ implies $T(\Phi_2) \lesssim T(\Phi_1)$ and the inverse is given by $T^{-1}(Q)=Q(y,0)^{-1}|y|^2$ for $0 \neq y\in\R^n$.
	We see that $T(\Phi_1 \wedge  \Phi_2) \sim T(\Phi_1)\vee  T(\Phi_2)$ and $T(\Phi_1 \vee \Phi_2) \sim T(\Phi_1) \wedge  T(\Phi_2)$.
	
	We can associate a Riemannian metric $g_{\Phi,k}$ to every $Q_\Phi \in \mathcal{B}$ as
	\[
	g_{\Phi,k} = \P^{-2}|dx|^2 + \japxin^{-2}|d\xi|^2, \quad (x,\xi)\in \R^{2n},
	\]
	and $g_{\Phi,k}(Y)=Q_\Phi(Y)$ for every $Y=(y,\eta)\in \R^{2n}$. Note that when $\Phi(x)=1$ and $k=1$, $g_{\Phi,k}$ corresponds to the metric $|dx|^2+\japxi^{-2}|d\xi|^2$ of the classical setting while the case $\Phi(x)=\japx$ and $k=1$ corresponds to the metric $\japx^{-2} |dx|^2 + \japxi^{-2}|d\xi|^2$ used in SG calculus \cite{Cordes}.
	
	Let $Q_{\Phi_1}$ and $Q_{\Phi_2}$ be two quadratic forms in $\mathcal{B}$. 
	\begin{lem}
		If $Q_{\Phi_2}(Y) \lesssim Q_{\Phi_1}(Y), Y \in \R^{2n}$, then the corresponding Riemannian metrics $g_{\Phi_1,k}$ and $g_{\Phi_2,k}$ are topologically equivalent.
	\end{lem}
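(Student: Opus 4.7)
The strategy is to extract a pointwise comparison of the two quadratic forms from the hypothesis, translate it into a comparison of the induced Riemannian distances, and then use local regularity of $\Phi_1,\Phi_2$ to pin down the induced topologies.

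First, I would evaluate $Q_{\Phi_2}(Y)\lesssim Q_{\Phi_1}(Y)$ at vectors of the form $Y=(y,0)$ with $y\ne 0$; since the $\xi$-component coefficient $\japxin^{-2}$ is common to both quadratic forms, it drops out and one extracts the pointwise inequality $\Phi_1(x)^2 \leq c\,\Phi_2(x)^2$ for all $x\in\R^n$. Consequently $g_{\Phi_2,k}(Y)\leq c\,g_{\Phi_1,k}(Y)$ at every point of phase space and for every tangent vector $Y$, and integrating along smooth paths yields the global distance comparison $d_{\Phi_2,k}(X,X') \leq \sqrt{c}\,d_{\Phi_1,k}(X,X')$ for the induced Riemannian distances $d_{\Phi_i,k}$. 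In particular, every $d_{\Phi_2,k}$-ball contains a suitably rescaled $d_{\Phi_1,k}$-ball, so the topology induced by $g_{\Phi_1,k}$ is at least as fine as that induced by $g_{\Phi_2,k}$.

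The reverse topological inclusion is the main obstacle, since the hypothesis is strictly one-sided and in general no pointwise reverse inequality between the metrics is available (consider, for instance, $\Phi_1\equiv 1$ and $\Phi_2=\japx$, which satisfy the hypothesis with no uniform reverse bound). To circumvent this, I would argue locally rather than globally: since $\Phi_1,\Phi_2\in \mathcal{A}$ are continuous with $1\leq \Phi_i(x)\lesssim 1+|x|$, on every compact set $K\subset \R^{2n}$ both $\Phi_i$ and $\japxin$ are pinched between positive constants. This makes each $g_{\Phi_i,k}$ bilipschitz equivalent to the standard Euclidean metric on $K$, so each $d_{\Phi_i,k}$ generates the standard manifold topology on $\R^{2n}$. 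The two topologies therefore coincide, which together with the one-sided quantitative inclusion from the preceding paragraph yields the topological equivalence claimed. The subtlety requiring care is that the bilipschitz equivalence to Euclidean is merely local and the constants degenerate as $|x|\to\infty$; however, for the purely topological (rather than uniform Lipschitz) assertion of the lemma, local comparability at every point is exactly what is needed.
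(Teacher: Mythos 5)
Your proof is correct, and it takes a genuinely different — and in fact more careful — route than the paper's. The paper also considers the identity map $I:(\R^{2n},g_{\Phi_1,k}) \to (\R^{2n},g_{\Phi_2,k})$, obtains the one-sided continuity from the hypothesis exactly as you do, but then appeals to ``the open mapping theorem'' to conclude that $I$ is open and deduces a reverse bound $g_{\Phi_1,k}\lesssim g_{\Phi_2,k}$. That step is dubious: the open mapping theorem is about surjective continuous linear operators between Banach spaces and does not apply to a nonlinear change of Riemannian structure, and in any case no uniform reverse bound can exist — your own counterexample $\Phi_1\equiv 1$, $\Phi_2=\japx$ satisfies the hypothesis yet the ratio $\Phi_2/\Phi_1$ is unbounded, so $g_{\Phi_1,k}\lesssim g_{\Phi_2,k}$ fails globally. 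Your argument avoids this pitfall by separating what the hypothesis actually delivers (a one-sided, global, pointwise comparison of quadratic forms, hence of Riemannian distances) from what is needed to finish (mere agreement of topologies), and it supplies the latter via the local bilipschitz equivalence of each $g_{\Phi_i,k}$ with the Euclidean metric on compacta, which holds because $1\leq \Phi_i\lesssim \japx$ and $\japxin$ is continuous and positive. This is the honest content of the lemma: the conclusion is \emph{topological} equivalence, and your proof delivers exactly that, whereas the paper's proof implicitly aims at a Lipschitz equivalence that cannot hold in general. One small remark: once you observe that both $d_{\Phi_1,k}$ and $d_{\Phi_2,k}$ induce the standard topology on $\R^{2n}$, the topological equivalence follows immediately and the hypothesis $Q_{\Phi_2}\lesssim Q_{\Phi_1}$ is not actually needed for the stated conclusion; it only buys the additional (one-sided) uniform comparison of distances recorded in your first paragraph.
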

	\begin{proof}
		We consider the identity map $I:(\R^{2n},g_{\Phi_1,k}) \to (\R^{2n},g_{\Phi_2,k})$. Since for any $Y_1,Y_2 \in \R^{2n}$ $g_{\Phi_2,k}(Y_2-Y_1) \leq C_1 g_{\Phi_1,k}(Y_2-Y_1)$ for some $C_1>0$, we see that $I$ is continuous. $I$ is also an open map by the open mapping theorem. This implies that $g_{\Phi_1,k}(Y_2-Y_1) \leq C_2g_{\Phi_2,k}(Y_2-Y_1)$ for some $C_2>0$. Thus the metrics are equivalent. 
	\end{proof}

	\addcontentsline{toc}{section}{Appendix II: Calculus for the Class $OPAG^{m_1,m_2}_{\Phi,k,\sigma;\sigma,\sigma}$}
	\section*{Appendix II: Calculus for the Class $OPAG^{m_1,m_2}_{\Phi,k,\sigma;\sigma,\sigma}$}

We now discuss in detail the symbol calculus for the class $AG^{m_1,m_2}_{\Phi,k,\sigma;\sigma,\sigma}$. The arguments used here are similar to the ones in \cite[Section 6.3]{nicRodi}. To start with, we prove that $P \in OPAG^{m_1,m_2}_{\Phi,k,\sigma;\sigma,\sigma}$ is continuous on $S_{\theta}^{\theta}\left(\R^{n}\right),\theta \geq \sigma$. Recall that $S_{\theta}^{\theta}\left(\R^{n}\right) \hookrightarrow \mathcal{M}^{2\theta}_{\Phi,k}(\R^n)$ and hence ${\mathcal{M}^{2\theta}_{\Phi,k}}'(\R^n) \hookrightarrow {S_{\theta}^{\theta}}'(\R^{n})$.
\begin{thm}
	Let $p \in AG^{m_1,m_2}_{\Phi,k,\sigma;\sigma,\sigma}$ and let $\theta \geq \sigma$. Then, the operator $P$ is a linear and continuous operator from $S_{\theta}^{\theta}\left(\R^{n}\right)$ to $S_{\theta}^{\theta}\left(\R^{n}\right)$ and it extends to a linear and continuous map
	from ${S_{\theta}^{\theta}}'(\R^{n})$ to ${S_{\theta}^{\theta}}'(\R^{n})$.
\end{thm}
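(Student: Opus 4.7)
The plan is to follow the classical approach for continuity of Gevrey-type pseudodifferential operators on Gelfand-Shilov spaces, in the spirit of \cite[Section 6.3]{nicRodi}. I would use the equivalent seminorm characterization
\[
u \in S_\theta^\theta(\R^n) \iff \exists\, A,M>0,\;\sup_{x \in \R^n}|x^\alpha \partial_x^\beta u(x)| \leq MA^{|\alpha|+|\beta|}(\alpha!\beta!)^\theta,\; \forall \alpha,\beta \in \mathbb{N}^n,
\]
so the task reduces to showing that $Pu$ satisfies an estimate of the same form with a (possibly larger) constant. Since $\mathcal{F}: S_\theta^\theta \to S_\theta^\theta$ is a topological isomorphism (as $\theta \geq \sigma \geq 3$), the dual Gevrey-Schwartz bound transfers to $\hat u$, giving in particular the super-exponential decay $|\hat u(\xi)| \leq C e^{-c\la\xi\ra_k^{1/\theta}}$.

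Starting from $(Pu)(x) = \int e^{ix\cdot\xi} p(x,\xi)\hat u(\xi)\,\textit{\dj}\xi$, I would estimate $x^\alpha \partial_x^\beta (Pu)(x)$ by using $x^\alpha e^{ix\cdot\xi} = D_\xi^\alpha e^{ix\cdot\xi}$ together with integration by parts in $\xi$, then distributing $\partial_x^\beta$ via Leibniz. This expresses $x^\alpha \partial_x^\beta(Pu)(x)$ as a finite sum (indexed by $\alpha'+\alpha''\leq\alpha$, $\beta'\leq\beta$) of oscillatory integrals of the form
\[
\int e^{ix\cdot\xi}(\partial_\xi^{\alpha'}\partial_x^{\beta'}p)(x,\xi)\,(\partial_\xi^{\alpha''}\hat u)(\xi)\,\textit{\dj}\xi.
\]
The symbol bound for $p \in AG^{m_1,m_2}_{\Phi,k,\sigma;\sigma,\sigma}$ produces factors $(\alpha'!\beta'!)^\sigma \japxin^{m_1-\gamma|\alpha'|+|\beta'|/\sigma} \P^{m_2-\gamma|\beta'|+|\alpha'|/\sigma}$, while the Gevrey estimate on $\hat u$ supplies $(\alpha''!)^\theta$ and decay in $\xi$. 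Because $\theta \geq \sigma$, the combined factorial weight $(\alpha'!\beta'!)^\sigma(\alpha''!)^\theta$ is dominated by $C^{|\alpha|+|\beta|}(\alpha!\beta!)^\theta$, delivering the required Gelfand-Shilov seminorm bound for $Pu$.

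The main technical obstacle is the polynomial growth in $x$ inherited from $\P^{m_2+|\alpha|/\sigma}$, since the decay of $\hat u$ is expressed in $\xi$ rather than $x$. I would handle this by splitting the $\xi$-integration at a scale $\japxin \sim \P^{\kappa}$ (with $\kappa$ chosen so that $\P^{m_2+|\alpha|/\sigma}$ is dominated by $e^{\varepsilon\japxin^{1/\theta}}$ on the low-frequency zone and absorbed by the super-exponential decay of $\hat u$ on the high-frequency zone), using the sub-linear, slowly varying and temperate properties (\ref{sl})-(\ref{tp}) of $\Phi$ to keep all bounds uniform. For the extension to ${S_\theta^\theta}'(\R^n)$, the calculus of Appendix~II shows that the formal transpose $P^t$ also has symbol in $AG^{m_1,m_2}_{\Phi,k,\sigma;\sigma,\sigma}$, so $P^t$ is continuous on $S_\theta^\theta(\R^n)$ by the argument just sketched, and one defines $\langle Pu,\varphi\rangle := \langle u, P^t \varphi\rangle$ for $u \in {S_\theta^\theta}'(\R^n)$ and $\varphi \in S_\theta^\theta(\R^n)$, giving the desired continuous extension.
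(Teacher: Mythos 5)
You take a genuinely different integration-by-parts route than the paper, and the difference matters. You propose to transfer $x^\alpha$ into the $\xi$-integral via $x^\alpha e^{ix\cdot\xi} = (-i\partial_\xi)^\alpha e^{ix\cdot\xi}$ and then integrate by parts. The paper instead multiplies and divides by $\japx^{2N}$ and uses $(1-\Delta_\xi)^N e^{ix\cdot\xi} = \japx^{2N}e^{ix\cdot\xi}$ for an $N$ chosen \emph{freely}, depending on $\alpha$ but not tied to $|\alpha|/2$. That freedom is the key: each $\partial_\xi$ landing on $p$ costs $\P^{1/\sigma}$ but gains $\japxin^{-\gamma}$, and together with the $\japx^{-2N}$ one obtains a net factor $\japx^{|\alpha|+n-2\gamma N}$; choosing $N\geq(|\alpha|+n)/\gamma$ makes this power non-positive so that the supremum over $x$ is finite.

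Your version has a real gap exactly at the point you flag. After replacing $x^\alpha e^{ix\cdot\xi}$ by $(-i\partial_\xi)^\alpha e^{ix\cdot\xi}$ there is no longer any compensating $x$-weight outside the integral; the only $x$-dependence in the resulting estimate is the symbol factor $\P^{m_2-\gamma|\beta'|+|\alpha'|/\sigma}$ with $|\alpha'|\leq|\alpha|$. Since $\P$ is generically unbounded (e.g.\ $\P=\japx^\kappa$), $\sup_x \P^{m_2+|\alpha'|/\sigma}=\infty$, and the proposed fix — splitting the $\xi$-integration at $\japxin\sim\P^\kappa$ — cannot help, because the offending factor does not depend on $\xi$ and the decay of $\partial_\xi^{\alpha''}\widehat{u}$ is in $\xi$, not in $x$. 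On the low-frequency zone $\japxin\lesssim\P^\kappa$ the weight $e^{-a\japxin^{1/\theta}}$ is $O(1)$ uniformly in $x$, so nothing controls the $\P$-power; and "absorbing" it into $e^{\varepsilon\japxin^{1/\theta}}$ just reintroduces a quantity unbounded in $x$ on that zone. What is needed is a mechanism that produces a genuine negative $x$-weight, which is precisely what the paper's $\japx^{-2N}$ provides and your scheme lacks.

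For the extension to ${S_\theta^\theta}'(\R^n)$, your appeal to the $AG$-calculus of Appendix~II for $P^t$ is slightly circular, since that calculus presupposes the continuity being proved. The paper sidesteps this: it writes $\int Pu(x)v(x)\,dx = \int \widehat{u}(\xi)\,p_v(\xi)\,\textit{\dj}\xi$ with $p_v(\xi)=\int e^{ix\cdot\xi}p(x,\xi)v(x)\,dx$, shows by the same direct estimate (not by the symbolic calculus) that $v\mapsto p_v$ is continuous on $S_\theta^\theta(\R^n)$, and then defines $Pu(v):=\widehat{u}(p_v)$ for $u\in{S_\theta^\theta}'(\R^n)$. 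This realizes the duality idea you have in mind, but without invoking the not-yet-available composition/transposition calculus.
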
	
\begin{proof}[\bfseries{Proof}]
	Let $u \in S_{\theta}^{\theta}\left(\mathbb{R}^{d}\right)$. Since $\mathcal{F}\left(S_{\theta}^{\theta}\left(\mathbb{R}^{n}\right)\right)=S_{\theta}^{\theta}\left(\mathbb{R}^{n}\right),$ we consider $u$ in a
	bounded subset $F$ of the Banach space defined by the norm
	\begin{linenomath*}
		\[
		\sup _{\beta} \sup _{\xi \in \mathbb{R}^{n}} A^{-|\beta|}(\beta !)^{\theta} e^{a|\xi|^{1 / \theta}}\left|\partial^{\beta} \widehat{u}(\xi)\right|
		\]
	\end{linenomath*}
	for some $A>0, a>0$. It is sufficient to show that there exist positive constants $A_{1}, B_{1}, C_{1}$ such that, for every $\alpha, \beta \in \mathbb{N}^{n}$
	\begin{linenomath*}
		\begin{equation}\label{eq4}
			\sup _{x \in \mathbb{R}^{n}}\left|x^{\alpha} D_{x}^{\beta} P u(x)\right| \leq C_{1} A_{1}^{|\alpha|} B_{1}^{|\beta|}(\alpha ! \beta !)^{\theta}
		\end{equation}
	\end{linenomath*}
	for all $u \in F,$ with $A_{1}, B_{1}, C_{1}$ independent of $u \in F .$ We have, for every $N \in \mathbb{N}$
	\begin{linenomath*}
		\[
		\begin{aligned}
			x^{\alpha} D_{x}^{\beta} P u(x) &=x^{\alpha} \sum_{\beta^{\prime} \leq \beta}
			\begin{pmatrix}
				\beta \\
				\beta^{\prime}
			\end{pmatrix} \int e^{i x \xi} \xi^{\beta^{\prime}} D_{x}^{\beta-\beta^{\prime}} p(x, \xi) \widehat{u}(\xi) \textit{\dj} \xi \\
			&=x^{\alpha}\la x\ra^{-2 N} \sum_{\beta^{\prime} \leq \beta}\begin{pmatrix}
				\beta \\
				\beta^{\prime}
			\end{pmatrix} \int e^{i x \xi}\left(1-\Delta_{\xi}\right)^{N}\left[\xi^{\beta^{\prime}} D_{x}^{\beta-\beta^{\prime}} p(x, \xi) \widehat{u}(\xi)\right] \textit{\dj} \xi
		\end{aligned}
		\]
	\end{linenomath*}
	Since $\P \leq \japx$, we easily obtain the estimate:
	\begin{linenomath*}
		\[
		\begin{aligned}
			\left|x^{\alpha} D_{x}^{\beta} P u(x)\right| \leq C_{0} B_{0}^{|\beta|+2 N}(2 N !)^{\theta}\japx^{|\alpha|+n-2\gamma N} \\
			\times \sum_{\beta^{\prime} \leq \beta}\begin{pmatrix}
				\beta \\
				\beta^{\prime}
			\end{pmatrix}\left(\beta^{\prime} !\right)^{\theta}\left(\beta-\beta^{\prime} !\right)^{\nu} \int\japxin^{m} e^{-a|\xi|^{\frac{1}{\theta}}} \textit{\dj} \xi
		\end{aligned}
		\]
	\end{linenomath*}
	for some $B_{0}, C_{0}$ independent of $u \in F .$ Choosing $N \geq (|\alpha|+n)/\gamma$, we obtain that there exist $A_{1}, B_{1}, C_{1}>0$ such that $(\ref{eq4})$ holds for all $u \in F$. Next, observe that, for $u, v \in S_{\theta}^{\theta}(\mathbb{R}^{n})$
	\begin{linenomath*}
		\[
		\int P u(x) v(x) d x=\int \widehat{u}(\xi) p_{v}(\xi) \textit{\dj} \xi
		\]
	\end{linenomath*}
	where
	\begin{linenomath*}
		\[
		p_{v}(x, \xi)=\int e^{i x \xi} p(x, \xi) v(x) d x
		\]
	\end{linenomath*}
	By arguing as before, the map $v \mapsto p_{v}$ is linear and continuous from $S_{\theta}^{\theta}(\R^{n})$ to itself. Then, we can define, for $u \in {S_{\theta}^{\theta}}'(\R^{n})$
	\begin{linenomath*}
		\[
		P u(v)=\widehat{u}\left(p_{v}\right), \quad v \in {S_{\theta}^{\theta}}'(\R^{n})
		\]
	\end{linenomath*}
	This map extends $P$ and is linear and continuous from ${S_{\theta}^{\theta}}'(\R^{n})$ to itself and it.
\end{proof}
We can associate to $P$ a kernel $K_P \in {S_{\theta}^{\theta}}'(\R^{n})$, given by
\begin{linenomath*}
	\begin{equation}\label{ker}
		K_P(x,y) = \int e^{i(x-y)\xi}p(x,\xi) \textit{\dj} \xi.
	\end{equation}	
\end{linenomath*}
We now prove the following result showing the regularity of the kernel (\ref{ker}).
\begin{thm}
	Let $p \in AG^{m_1,m_2}_{\Phi,k,\sigma;\sigma,\sigma}$. For $M>0$, define the region:
	\begin{linenomath*}
		\[
		\Omega_M = \{(x,y) \in \R^{2n}: |x-y|>M\P\}.
		\]
	\end{linenomath*}
	Then the kernel $K_P$ defined by (\ref{ker}) is in $C^\infty(\Omega_M)$ and there exist positive constants $C,a$ depending on $M$ such that
	\begin{linenomath*}
		\[
		|D_x^\beta D_y^\alpha K_P(x,y)| \leq C^{|\alpha|+|\beta|+1} (\beta!\alpha!)^\theta \exp\Big\{ -a (\P\Phi(y))^{\frac{1}{\theta}} \Big\}
		\]
	\end{linenomath*}
	for all $(x,y) \in \bar{\Omega}_M$ and $\beta,\alpha \in \mathbb{Z}_+^n$.
\end{thm}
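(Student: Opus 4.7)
The plan is a non-stationary phase argument combined with the Gevrey-Stirling inequality, in the spirit of \cite[Section~6.3]{nicRodi} and \cite{AscaCappi2} but adapted to the $\Phi$-weighted calculus of the class $AG^{m_1,m_2}_{\Phi,k,\sigma;\sigma,\sigma}$. Fix $(x,y)\in \bar{\Omega}_M$ with $x\neq y$, and choose an index $j\in\{1,\dots,n\}$ with $|x_j-y_j|\geq |x-y|/\sqrt{n}$. Expanding $D_x^\beta D_y^\alpha K_P(x,y)$ by the Leibniz rule, together with $D_x^{\beta'} D_y^\alpha e^{i(x-y)\xi} = (-1)^{|\alpha|}\xi^{\beta'+\alpha}e^{i(x-y)\xi}$, writes the derivative as a finite Leibniz sum of oscillatory integrals of the form $\int e^{i(x-y)\xi}\,\xi^{\beta'+\alpha}\, D_x^{\beta-\beta'}p(x,\xi)\,\textit{\dj}\xi$. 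In each such integral, iterating the identity $e^{i(x-y)\xi}=-(i(x_j-y_j))^{-1}\partial_{\xi_j}e^{i(x-y)\xi}$ a free number $N$ of times and integrating by parts transfers $N$ derivatives in $\xi_j$ onto the integrand and produces a factor $|x_j-y_j|^{-N}\leq n^{N/2}|x-y|^{-N}$.

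Next, applying Leibniz to $\partial_{\xi_j}^N(\xi^{\beta'+\alpha}D_x^{\beta-\beta'}p)$ and using the symbol estimate of $AG^{m_1,m_2}_{\Phi,k,\sigma;\sigma,\sigma}$, the $\xi$-integrand is dominated by $B\,C^{|\alpha|+|\beta|+N}(\alpha!\beta!)^\theta (N!)^\sigma \japxin^{M_N}\P^{m_2+N/\sigma}$, where the effective weight $M_N$ becomes sufficiently negative once $N$ exceeds a threshold depending on $|\alpha|+|\beta|$; the $\xi$-integral then converges and is bounded by a constant times $k^{-\gamma N}$, with $\gamma=1-1/\sigma$. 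This yields an intermediate bound of the form $C_1^{|\alpha|+|\beta|+1}(\alpha!\beta!)^\theta C_2^N (N!)^\sigma \P^{m_2}\,\P^{N/\sigma}|x-y|^{-N}$, uniform in $N$.

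To convert $|x-y|^{-N}$ into joint decay in $\Phi(x)\Phi(y)$, I would exploit two consequences of $(x,y)\in \bar{\Omega}_M$. First, $|x-y|\geq M\P$ absorbs the loss $\P^{N/\sigma}$ against a matching fraction of the gain. Second, sub-additivity $\Phi(y)\leq \Phi(x)+\Phi(y-x)$ together with the sub-linear growth $\Phi(y-x)\lesssim 1+|y-x|$ gives $\Phi(y)\lesssim |x-y|$ on $\bar{\Omega}_M$, hence the geometric inequality $\P\,\Phi(y)\lesssim |x-y|^2$. Combining these, the bound reduces to $C_3^{|\alpha|+|\beta|+1}(\alpha!\beta!)^\theta \P^{m_2}\, C_4^N (N!)^\sigma (\P\,\Phi(y))^{-\rho N}$ for an effective exponent $\rho>0$.

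Finally, since $N$ remains free, the Gevrey-Stirling inequality $\inf_{N\in\mathbb{Z}_+} (N!)^\sigma A^N \lesssim \exp\{-c\, A^{-1/\sigma}\}$ (compare \cite[Lemma~6.3.10]{nicRodi}) applied with $A\sim (\P\,\Phi(y))^{-\rho}$ produces the desired exponential factor $\exp\{-a(\P\,\Phi(y))^{1/\theta}\}$, after adjusting constants to accommodate $\theta\geq\sigma$. Smoothness of $K_P$ on $\bar{\Omega}_M$ is inherited from the uniformity of the estimate in compact subsets. The main obstacle will be the combinatorial book-keeping in the second step — tracking the interplay of the three Gevrey factors $(\alpha!\beta!)^\theta$, $(N!)^\sigma$ and the excess $\P^{N/\sigma}$, the last of which reflects that the calculus is built on the metric $\tilde g_{\Phi,k}$ rather than $g_{\Phi,k}$ — so that the balancing in the third step and the final optimization in $N$ deliver exactly the exponent $1/\theta$ rather than a strictly weaker one.
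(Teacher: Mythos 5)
Your approach (iterated integration by parts in a single $\xi_j$, followed by Gevrey--Stirling optimization over the number $N$ of iterations) is genuinely different from the paper's, and it contains the gap you yourself flag at the end: the balancing does \emph{not} deliver the exponent $1/\theta$.

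Concretely, after $N$ integrations by parts your intermediate bound has the shape
$(N!)^\sigma\, C^N\, \Phi(x)^{N/\sigma}\,|x-y|^{-N}$, where the factor $\Phi(x)^{N/\sigma}$ is exactly the ``excess'' coming from the $\tilde g_{\Phi,k}$-type symbol estimates: each $\xi$-derivative of $p\in AG^{m_1,m_2}_{\Phi,k,\sigma;\sigma,\sigma}$ costs a $\Phi(x)^{1/\sigma}$. On $\bar\Omega_M$ the \emph{only} way to absorb this loss is through $|x-y|\geq M\Phi(x)$, which effectively replaces $|x-y|^{-N}$ by $|x-y|^{-\gamma N}$ with $\gamma=1-\tfrac1\sigma$. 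Gevrey--Stirling then yields $\exp\{-c\,|x-y|^{\gamma/\sigma}\}$, and the geometric inequality $\Phi(x)\Phi(y)\lesssim |x-y|^2$ converts this into $\exp\{-c'(\Phi(x)\Phi(y))^{\gamma/(2\sigma)}\}$. But $\gamma/(2\sigma)=(\sigma-1)/(2\sigma^2)$ is strictly smaller than $1/\sigma$, hence strictly smaller than $1/\theta$ for $\theta$ in the natural range $\theta\geq\sigma$ (for $\sigma=3$ you would get exponent $1/9$ instead of the required $1/3$). No rebalancing of how $|x-y|^{-N}$ is split between $\Phi(x)$ and $\Phi(y)$ changes this: the bottleneck is that $N$-fold single-variable integration by parts pays $\Phi(x)^{N/\sigma}$ with no compensating gain, and the Gevrey--Stirling infimum over $N$ cannot recover it.

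The paper avoids this by \emph{not} producing the exponential decay through optimization over the number of integration-by-parts steps. Instead it (i) performs a dyadic decomposition in $\xi$, writing $K_P=\sum_N K_N$ with $\psi_N$ supported where $\japxin\sim RN^\theta$, and (ii) inserts the normalized Gevrey-entire operator $L=M_{2\theta,\lambda}(x-y)^{-1}\sum_j\frac{\lambda^j}{(j!)^{2\theta}}(1-\Delta_\xi)^j$, which fixes $e^{i(x-y)\xi}$. The exponential factor $\exp\{-a(\Phi(x)\Phi(y))^{1/\theta}\}$ then comes \emph{directly} from the lower bound $M_{2\theta,\lambda}(x-y)\gtrsim \exp\{c|x-y|^{2/\theta}\}$ together with $|x-y|^2\gtrsim\Phi(x)\Phi(y)$ on $\bar\Omega_M$; it does not pass through a Gevrey--Stirling step. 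The auxiliary integration by parts with a factor $(x_r-y_r)^{-l}$, $\gamma l>N$, plays a completely different role in the paper: combined with the localization $\japxin\sim RN^\theta$ it produces a summable factor $(C/R)^N$ over the dyadic scales, not the $|x-y|$-decay. If you want to keep an $N$-fold-iteration structure you would have to replace the factorial weight $(N!)^\sigma$ by the much flatter weight $(N!)^{2\theta}$ arising in $L$, and this is precisely what the $M_{2\theta,\lambda}$-normalization encodes. Without this device, your route proves the theorem only for $\theta\geq 2\sigma^2/(\sigma-1)$, which is strictly weaker than the stated range.
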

\begin{proof}[\bfseries{Proof}]
	As in \cite[Lemma 6.3.4]{nicRodi}, for any given $R>1$, we can find a sequence $\psi_N(\xi) \in C^\infty_0(\R^n), N=0,1,2,\dots$, such that $\sum_{N=0}^{\infty}\psi_n=1$,
	\begin{linenomath*}
		\[
		\text{supp}\psi_0 \subset \{\xi:\japxin \leq 3R\},
		\]
	\end{linenomath*}
	\begin{linenomath*}
		\[
		\text{supp}\psi_N \subset \{\xi: 2RN^\theta \japxin \leq 3R(N+1)^\theta\},  \quad \text{and}
		\]
	\end{linenomath*}
	\begin{linenomath*}
		\[
		|D_\xi^\alpha \psi_N(\xi)| \leq C^{|\alpha|+1}(\alpha!)^\theta \Big(R \sup\{N^\theta,1\}\Big)^{-|\alpha|},
		\]
	\end{linenomath*}
	for $ N=1,2,\dots,$ and $\alpha \in \mathbb{Z}_+^n$. Using this partition of unity, we can decompose $K_P$ as 
	\begin{linenomath*}
		\[
		K_P = \sum_{N=0}^{\infty}K_N,
		\]
	\end{linenomath*}
	where 
	\begin{linenomath*}
		\[
		K_N(x,y) = \int e^{i(x-y)\xi}p(x,\xi)\psi_N(\xi) \textit{\dj} \xi.
		\]
	\end{linenomath*}
	Let $M>0$ and $(x,y) \in \bar{\Omega}_M$. Let $r \in \{1,2,\dots,n\}$ such that $|x_r-y_r| \geq \frac{M}{n} \P$. Then, for every $\alpha, \beta \in \mathbb{Z}^n$,
	\begin{linenomath*}
		\begin{equation*}
			D_x^\alpha D_y^\beta K_N(x,y) = (-1)^{|\beta|} \sum_{\delta \leq \alpha} \begin{pmatrix*}\alpha \\ \delta\end{pmatrix*} \int e^{i(x-y)\xi} \xi^{\beta+\delta}\psi_N(\xi)D_x^{\alpha-\delta}p(x,\xi) \textit{\dj} \xi. 	\end{equation*}
	\end{linenomath*}
	Given $\lambda>0$, we consider the operator
	\begin{linenomath*}
		\[
		L = \frac{1}{M_{2\theta,\lambda}(x-y)} \sum_{j=0}^{\infty} \frac{\lambda^j}{(j!)^{2\theta}}(1-\Delta_\xi)^j, \quad \text{where}
		\]
	\end{linenomath*}
	\begin{linenomath*}
		\[
		M_{2\theta,\lambda}(x-y) = \sum_{j=0}^{\infty} \frac{\lambda^j}{(j!)^{2\theta}}\la x-y \ra^j.
		\]
	\end{linenomath*}
	Since $L e^{i(x-y)\xi} =e^{i(x-y)\xi}$, we can integrate by parts obtaining that
	\begin{linenomath*}
		\begin{align*}
			D_x^\alpha D_y^\beta K_P(x,y) &= \frac{(-1)^{|\beta|}}{M_{2\theta,\lambda}(x-y)} \sum_{\delta \leq \alpha} \begin{pmatrix*}\alpha\\ \delta\end{pmatrix*} \sum_{j=0}^{\infty} \frac{\lambda^j}{(j!)^{2\theta}} (1-\Delta_\xi)^j \\
			& \qquad \times \int e^{i(x-y)\xi} \xi^{\beta+\delta}\psi_N(\xi)D_x^{\alpha-\delta}p(x,\xi) \textit{\dj} \xi\\
			& = (-1)^{|\beta|+l}\frac{(x_r-y_r)^{-l}}{M_{2\theta,\lambda}(x-y)} \sum_{\delta \leq \alpha} \begin{pmatrix*}\alpha\\ \delta\end{pmatrix*} \sum_{j=0}^{\infty} \frac{\lambda^j}{(j!)^{2\theta}} \\
			& \qquad \times \int e^{i(x-y)\xi} D_{\xi_r}^l (1-\Delta_\xi)^j[ \xi^{\beta+\delta}\psi_N(\xi)D_x^{\alpha-\delta}p(x,\xi)] \textit{\dj} \xi
		\end{align*}
	\end{linenomath*}
	where $l \in \mathbb{Z}_+$ is choosen appropriately. Let
	\begin{linenomath*}
		\[
		F_{rljN\alpha\beta\delta} = D_{\xi_r}^l (1-\Delta_\xi)^j[ \xi^{\beta+\delta}\psi_N(\xi)D_x^{\alpha-\delta}p(x,\xi)].
		\]
	\end{linenomath*}
	We denote by $e_r$ the $r$-th vector of the canonical basis of $\R^n$ and $\la \beta,e_r \ra = \beta_r$, $\la \delta,e_r \ra = \delta_r$. We see that
	\begin{linenomath*}
		\begin{align*}
			F_{rljN\alpha\beta\delta} = &\sum_{\substack{l_1+l_2+l_3=l \\ l_1 \leq \beta_r+\delta_r}}(-i)^{l_1} \frac{l!}{l_1!l_2!l_3!} \frac{(\beta_r+\delta_r)!}{(\beta_r+\delta_r-l_1)!} \\
			&\times (1-\Delta_\xi)^j[ \xi^{\beta+\delta-l_1e_r} D_{\xi_r}^ {l_2}\psi_N(\xi) D_{\xi_r}^{l_3}D_x^{\alpha-\delta}p(x,\xi)].
		\end{align*}
	\end{linenomath*}
	Hence
	\begin{linenomath*}
		\begin{align*}
			|F_{rljN\alpha\beta\delta}| = &\sum_{\substack{l_1+l_2+l_3=l \\ l_1 \leq \beta_r+\delta_r}}(-i)^{l_1} \frac{l!}{l_1!l_2!l_3!} \frac{(\beta_r+\delta_r)!}{(\beta_r+\delta_r-l_1)!} C_1^{|\alpha-\delta|+l_2+l_3+1} \\
			&\times (N_2!)^\theta (N_3!)^\sigma[(\alpha-\delta)!]^\sigma C_2^j(j!)^{2\theta} \bigg(\frac{1}{RN^\theta}\bigg)^{l_2} \\
			&\times \japxin^{m_1- \gamma l_3 + |\alpha-\delta|/\sigma+|\beta|+|\delta|-l_1} \P^{m_2-\gamma|\alpha-\delta|+(l_3+j)/\sigma}.
		\end{align*}
	\end{linenomath*}
	Note that on the support of $\psi_n$, $RN^\theta \japxin \leq 3R(N+1)^\theta$. Since $\theta \geq \sigma$, it follows that
	\begin{linenomath*}
		\[
		|F_{rljN\alpha\beta\delta}| \leq C_1^{|\alpha|+|\beta|+1}(\alpha!\beta!)^\theta C_2^j (j!)^{2\theta} \bigg(\frac{C_3}{R}\bigg)^{\gamma l}\P^{m_2-\gamma|\alpha-\delta|+(l_3+j)/\sigma},
		\] 
	\end{linenomath*}
	with $C_3$ independent of $R$. We now choose $l$ such that $\gamma l>N$. We observe that for every $c>1$ there exist positive constants $\varepsilon,c'$ such that, for $\tau>0$,
	\begin{linenomath*}
		\begin{equation}\label{ineq}
			\varepsilon \exp[c'\tau] \leq \sum_{j=0}^{\infty} \bigg(\frac{\tau^j}{j!}\bigg)^c.
		\end{equation}
	\end{linenomath*}
	Setting $c=\theta,\tau = \lambda^{\frac{1}{\theta}}\la x-y\ra^{\frac{2}{\theta}}$, we have that
	\begin{linenomath*}
		\[
		|M_{2\theta,\lambda}(x-y)| \geq \varepsilon \exp\{c'\lambda^{\frac{1}{\theta}} | x-y|^{\frac{2}{\theta}}\}.
		\]
	\end{linenomath*}
	Observe that $|x-y|^2 \geq c'' \japx \la y \ra$. From these estimates, choosing $\lambda<C_2^{-1}$ and $R$ sufficiently large, we see that 
	\begin{linenomath*}
		\[
		|D_x^\alpha D_y^\beta K_P(x,y)| \leq C_1^{|\alpha|+|\beta|+1} (\alpha!\beta!)^\theta \bigg(\frac{C_4}{R}\bigg)^{N} \exp\Big\{-\frac{c'}{2}\lambda^{\frac{1}{\theta}}(\P\Phi(y))^{\frac{1}{\theta}}\Big\},
		\]
	\end{linenomath*}
	where $C_4$ is independent of $R$.
\end{proof}

\begin{defn}
	A linear continuous operator from $\mathcal{S}^\theta_\theta(\R^n)$  to $\mathcal{S}^\theta_\theta(\R^n)$ is said to be $\theta$-regularizing if it extends to a linear continuous map from ${\mathcal{S}^\theta_\theta}'(\R^n)$ to $\mathcal{S}^\theta_\theta(\R^n)$.
\end{defn}	

\begin{defn}
	A linear continuous operator from $\mathcal{M}^\theta_{\Phi,k}(\R^n)$  to $\mathcal{M}^\theta_{\Phi,k}(\R^n)$ is said to be $(\Phi,\theta)$-regularizing if it extends to a linear continuous map from ${\mathcal{M}^\theta_{\Phi,k}}'(\R^n)$ to $\mathcal{M}^\theta_{\Phi,k}(\R^n)$.
\end{defn}

For $t>1$, we set
\begin{linenomath*}
	\[
	Q_t=\{(x,\xi) \in \R^{2n}: \P\japxin<t\},
	\]
\end{linenomath*}
and 
\begin{linenomath*}
	\[
	Q_t^e= \R^{2n} \setminus Q_t.
	\]
\end{linenomath*}
\begin{defn} \label{FS}
	We denote by $FAG^{m_1,m_2}_{\Phi,k,\sigma;\sigma,\sigma}$ the space of all formal sums $\sum_{j \geq 0}p_j(x,\xi)$ such that $p_j(x,\xi) \in C^\infty(\R^{2n})$
	such that for all $ j\geq 0$ and there exists $B,C>0$ such that
	\begin{linenomath*}
		\begin{align*}
			\sup_{j \geq 0}\sup_{\alpha,\beta \in \mathbb{Z}_+^n} &\sup_{(x,\xi) \in  Q_{Bj^{2\sigma-2}}^e} C^{-|\alpha|-|\beta|-2j}(\alpha!)^{-\sigma} (\beta!)^{-\sigma} (j!)^{-2\sigma+2} \\
			&\japxin^{-m_1+j+\gamma|\alpha|-|\beta|/\sigma} \P^{-m_2+j+\gamma|\beta|-|\alpha|/\sigma} |D_\xi^\alpha \partial_x^\beta p_j(x,\xi)| < + \infty.
		\end{align*}
	\end{linenomath*}
\end{defn}
Every symbol $p \in AG^{m_1,m_2}_{\Phi,k,\sigma;\sigma,\sigma}$ can be identified with an element of $FAG^{m_1,m_2}_{\Phi,k,\sigma;\sigma,\sigma},$ by setting $p_0=p$ and $p_j=0$ for $j \geq 1$.
\begin{defn}
	Two sums $\sum_{j \geq 0}p_j$ , $\sum_{j \geq 0}p_j'$ are said to be equivalent if there exist constants $A,B>0$ such that 
	\begin{linenomath*}
		\begin{align*}
			\sup_{N\in \mathbb{Z_+}} &\sup_{\alpha,\beta \in \mathbb{N}^n} \sup_{(x,\xi) \in  Q_{BN^{2\sigma-2}}^e} C^{-|\alpha|-|\beta|-2N}(\alpha!)^{-\sigma} (\beta!)^{-\sigma} (N!)^{-2\sigma+2} \\
			&\japxin^{-m_1+N+\gamma|\alpha|-|\beta|/\sigma} \P^{-m_2+N+\gamma|\beta|-|\alpha|/\sigma} |D_\xi^\alpha \partial_x^\beta \sum_{j<N} (p_j-p_j')| < + \infty,
		\end{align*}
	\end{linenomath*}
	and we write $\sum_{j \geq 0}p_j \sim \sum_{j \geq 0}p_j'$.
\end{defn}

\begin{prop}\label{sum}
	Given $\sum_{j \geq 0}p_j \in FAG^{m_1,m_2}_{\Phi,k,\sigma;\sigma,\sigma}$, there exists a symbol $p \in AG^{m_1,m_2}_{\Phi,k,\sigma;\sigma,\sigma}$ such that
	\begin{linenomath*}
		\[
		p \sim \sum_{j \geq 0}p_j \quad \text{ in } \: FAG^{m_1,m_2}_{\Phi,k,\sigma;\sigma,\sigma}.
		\]
	\end{linenomath*}
\end{prop}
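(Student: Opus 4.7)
The plan is to realize $p$ through a Gevrey-regularized Borel summation. I would pick a sequence $R_j \geq B_0 j^{2\sigma-2}$ with $B_0$ large relative to the constant $B$ appearing in Definition~\ref{FS}, and construct smooth cutoffs $\chi_j \in C^{\infty}(\R^{2n})$ that vanish on $Q_{R_j}$ and equal $1$ on $Q_{2R_j}^{e}$. The candidate symbol is
\begin{linenomath*}
\[
p(x,\xi) \;=\; \sum_{j \geq 0}\chi_j(x,\xi)\,p_j(x,\xi).
\]
\end{linenomath*}
At any $(x,\xi)$ only the finitely many indices $j$ with $R_j \leq \P\japxin$ contribute, so the sum defines a $C^{\infty}$ function pointwise and convergence is purely a matter of uniform estimates.

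First I would produce the $\chi_j$'s via a $\sigma$-Gevrey partition of unity on $\R_+$ composed with a smoothed version of $\P\japxin$; the slowly varying and temperate conditions (\ref{sv})--(\ref{tp}) on $\Phi$ allow the regularization recorded in Appendix~I without losing the ambient estimates. The $\chi_j$'s will satisfy $|\partial_\xi^\alpha \partial_x^\beta \chi_j(x,\xi)| \leq C^{|\alpha|+|\beta|+1}(\alpha!)^\sigma(\beta!)^\sigma \japxin^{-|\alpha|}\P^{-|\beta|}$, with a crucial extra gain $(\P\japxin)^{-|\alpha|-|\beta|}$ concentrated on the transition shell $Q_{2R_j}\setminus Q_{R_j}$ where $\partial\chi_j$ lives. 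Second, to check $p \in AG_{\Phi,k,\sigma;\sigma,\sigma}^{m_1,m_2}$, I would at fixed $(x,\xi)$ bound $\partial_\xi^\alpha \partial_x^\beta(\chi_j p_j)$ by Leibniz and insert the $FAG$-estimate of $p_j$ (legitimate on $\mathrm{supp}\,\chi_j$ because $\P\japxin \geq R_j \geq Bj^{2\sigma-2}$ there). Each nonzero summand contributes the target bound times a factor $C^{2j}(j!)^{2\sigma-2}(\P\japxin)^{-j}$; Stirling and the calibration $R_j \sim j^{2\sigma-2}$ make the ratio of consecutive terms $\leq 1$ throughout the range of nonvanishing $j$, so the sum collapses to a geometric constant. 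For the equivalence, fix $N$ and $(x,\xi)\in Q_{BN^{2\sigma-2}}^e$ and decompose
\begin{linenomath*}
\[
p - \sum_{j<N}p_j \;=\; \sum_{j<N}(\chi_j-1)p_j \;+\; \sum_{j \geq N}\chi_j p_j;
\]
\end{linenomath*}
if $B_0$ is chosen so that $Q_{2R_{N-1}} \subset Q_{BN^{2\sigma-2}}$, the first sum vanishes on our region, and the tail is estimated exactly as before but starting the summation at $j=N$, which produces the prefactor $(N!)^{2\sigma-2}(\P\japxin)^{-N}$ demanded by the definition of $\sim$.

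The main obstacle will be the combinatorial bookkeeping in the symbol estimate: the Leibniz expansion $\partial^{\alpha'+\beta'}\chi_j \cdot \partial^{(\alpha-\alpha')+(\beta-\beta')}p_j$ produces two independent Gevrey factors that must consolidate into a single $(\alpha!\beta!)^\sigma$, and the extra $(\P\japxin)^{-|\alpha'|-|\beta'|}$ available where $\partial\chi_j$ is supported has to precisely counterbalance the derivatives of $\chi_j$ against the $j$-th order gain of $p_j$. The exponent $2\sigma-2$ in Definition~\ref{FS} is tailored for exactly this balance; once it is traced through the Leibniz sum, the argument proceeds in lockstep with the classical case in \cite[Section 6.3]{nicRodi} and the proposition follows.
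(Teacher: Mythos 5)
Your plan is essentially the paper's proof: take $p = \sum_{j}\chi_j p_j$ with cutoffs supported where $\P\japxin \gtrsim j^{2\sigma-2}$, expand by Leibniz, insert the FAG estimates, and let Stirling applied to $(j!)^{2\sigma-2}(\P\japxin)^{-j}$ on $\operatorname{supp}\chi_j$ produce a convergent geometric series after enlarging the calibration constant. The only implementation difference is in how the cutoffs are built: the paper dilates a single excision function, $\varphi_j(x,\xi)=\varphi\bigl(x/(Rj^{\sigma-1}),\xi/(Rj^{\sigma-1})\bigr)$, so its derivative gain is $(Rj^{\sigma-1})^{-|\alpha'|-|\beta'|}$, whereas you propose a radial cutoff in $\P\japxin$; both deliver what the Leibniz bookkeeping needs.

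One claim you should correct: a cutoff of $\P\japxin/R_j$ does \emph{not} carry an extra gain $(\P\japxin)^{-|\alpha|-|\beta|}$ on the transition shell on top of $\japxin^{-|\alpha|}\P^{-|\beta|}$. Fa\`a di Bruno against the symbol bounds for $\Phi$ and $\japxin$ yields exactly $C^{|\alpha|+|\beta|+1}(\alpha!\beta!)^\sigma\japxin^{-|\alpha|}\P^{-|\beta|}$ and nothing stronger; on the shell $\P\japxin\sim R_j$, so the chain-rule factors $\bigl(\P\japxin/R_j\bigr)^m$ are order one and contribute no further decay. Fortunately the weaker bound already suffices: paired with the FAG bound for $\partial_\xi^{\alpha-\alpha'}\partial_x^{\beta-\beta'}p_j$, it leaves an excess $(\P\japxin)^{-j-(|\alpha'|+|\beta'|)/\sigma}$ relative to the target $AG^{m_1,m_2}_{\Phi,k,\sigma;\sigma,\sigma}$ weight, and the $(\P\japxin)^{-j}$ part beats $(j!)^{2\sigma-2}$ on $\operatorname{supp}\chi_j$ just as you intend. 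Also, in the equivalence step the constant $B$ in the definition of $\sim$ is free and is \emph{not} the $B$ of Definition~\ref{FS}; it should be chosen roughly $\geq 2B_0$ so that $Q_{2R_{N-1}}\subset Q_{BN^{2\sigma-2}}$, which otherwise contradicts your requirement that $B_0$ be large.
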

\begin{proof}[\bfseries{Proof}]
	To construct the symbol $p$, we consider the excision function $\varphi \in C^\infty(\R^{2n})$ such that $0 \leq \varphi \leq 1$ and $\varphi(x,\xi) = 0$ if $(x,\xi) \in Q_2$, $\varphi(x,\xi) = 1$ if $(x,\xi) \in Q_3^e$ and 	
	\begin{linenomath*}
		\begin{equation} \label{exci}
			\sup _{(x, \xi) \in \mathbb{R}^{2 n}}\left|D_{\xi}^{\alpha} D_{x}^{\beta} \varphi(x, \xi)\right| \leq C^{|\alpha|+|\beta|+1}(\alpha !\beta !)^{\sigma}	.	 
		\end{equation}
	\end{linenomath*}
	We define for $R>0$
	\begin{linenomath*}
		$$\begin{array}{c}
			\varphi_{0}(x, \xi) = \varphi(x, \xi) \quad \text { on } \mathbb{R}^{2 d} \\
			\varphi_{j}(x, \xi)=\varphi\left(\frac{x}{R {j}^{\sigma-1}}, \frac{\xi}{R {j}^{\sigma-1}}\right), \quad j \geq 1.
		\end{array}$$
	\end{linenomath*}
	For sufficiently large $R$, we will prove that 
	\begin{linenomath*}
		\[
		p(x, \xi)=\sum_{j \geq 0} \varphi_{j}(x, \xi) p_{j}(x, \xi),
		\]
	\end{linenomath*}
	is in $AG^{m_1,m_2}_{\Phi,k,\sigma;\sigma,\sigma}$ and $	p \sim \sum_{j \geq 0}p_j $ in $ FAG^{m_1,m_2}_{\Phi,k,\sigma;\sigma,\sigma}$. From Definition \ref{FS} ,we have
	\begin{linenomath*}
		\begin{align*}
			|D_{\xi}^{\alpha} D_{x}^{\beta} p(x, \xi)| &= \big|\sum_{j \geq 0} \sum\limits_{\substack{\alpha' \leq \alpha \\ \beta' \leq \beta}} \begin{pmatrix} \alpha \\ \alpha' \end{pmatrix}
			\begin{pmatrix}	\beta \\  \beta' \end{pmatrix} 
			D_{\xi}^{\alpha-\alpha'} D_{x}^{\beta-\beta'} p_{j}(x, \xi) D_{\xi}^{\alpha'} D_{x}^{\beta'} \varphi_{j}(x, \xi)\big|\\
			&\leq C^{|\alpha|+|\beta|+1} \alpha ! \beta !\japxin^{m_1-\gamma|\alpha|+|\beta|/\sigma} \Phi^{m_2-\gamma|\beta|+|\alpha|/\sigma} \sum_{j \geq 0} H_{j \alpha \beta}(x, \xi)
		\end{align*}
	\end{linenomath*}
	where
	\begin{linenomath*}
		\begin{align*}
			H_{j \alpha \beta}(x, \xi) = \sum_{\alpha' \leq \alpha \atop \beta' \leq \beta}  &\frac{\big((\alpha-\alpha') ! (\beta-\beta') !\big)^{\sigma-1}}{\alpha' ! \beta' !} C^{2 j-|\alpha'|-|\beta'|}(j !)^{2\sigma-2} \\
			& \times\japxin^{\gamma|\alpha'|-j}\P^{\gamma|\beta'|-j}\left|D_{\xi}^{\alpha'} D_{x}^{\beta'} \varphi_{j}(x, \xi)\right|.
		\end{align*}
	\end{linenomath*}
	From (\ref{exci}), we have
	\begin{linenomath*}
		\[
		H_{j \alpha \beta}(x, \xi) \leq C^{|\alpha|+|\beta|+1}(\alpha !)^{\sigma-1}(\beta !)^{\sigma-1}\left(\frac{C_{1}}{R^2}\right)^{j}
		\]
	\end{linenomath*}
	where $C_1>0$ is independent of $R$. By choosing $R$ sufficiently large, we obtain the required result. We observe that
	\begin{linenomath*}
		\[
		p(x, \xi)-\sum_{j<N} p_{j}(x, \xi)=\sum_{j \geq N} p_{j}(x, \xi) \varphi_{j}(x, \xi),
		\]
	\end{linenomath*}
	for $(x,\xi) \in Q^e_{3RN^{2\sigma-1}}, N \in \mathbb{Z}_+$, which we can estimate as before.
\end{proof}
\begin{prop}\label{zero}
	Let $p \in AG^{0,0}_{\Phi,k,\sigma;\sigma,\sigma}$ and $\theta \geq 2 (\sigma-1)$. If $p \sim 0$ in $FAG^{0,0}_{\Phi,k,\sigma;\sigma,\sigma}$, then the operator $P$ is $(\Phi,\theta)$-regularizing.
\end{prop}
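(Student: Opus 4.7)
The plan is to upgrade the formal equivalence $p \sim 0$ to a pointwise super-exponential decay estimate on the symbol, then transfer this decay to the Schwartz kernel of $P$, and conclude by a kernel characterization of $(\Phi,\theta)$-regularizing operators. For the first step, from $p \sim 0$ in $FAG^{0,0}_{\Phi,k,\sigma;\sigma,\sigma}$, for every $N \in \mathbb{Z}_+$ and every $(x,\xi) \in Q^e_{BN^{2\sigma-2}}$,
\[
|D_\xi^\alpha D_x^\beta p(x,\xi)| \leq C^{|\alpha|+|\beta|+2N}(\alpha!\beta!)^\sigma (N!)^{2\sigma-2}\japxin^{-N+|\beta|/\sigma-\gamma|\alpha|}\P^{-N+|\alpha|/\sigma-\gamma|\beta|}.
\]
I would optimize over $N$ by taking $N \approx c_0(\P\japxin)^{1/(2(\sigma-1))}$ (with $c_0$ small enough to secure the constraint $BN^{2\sigma-2} \leq \P\japxin$) and using Stirling to see that $(N!)^{2(\sigma-1)}(\P\japxin)^{-N}$ provides an exponential gain $\exp\{-c(\P\japxin)^{1/(2(\sigma-1))}\}$. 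The hypothesis $\theta \geq 2(\sigma-1)$ enters here: it permits replacing $1/(2(\sigma-1))$ by $1/\theta$ in the exponent for $\P\japxin \geq 1$ at the cost of a smaller constant. Combined with the native $AG^{0,0}$ bound on the complementary bounded set, this gives globally
\[
|D_\xi^\alpha D_x^\beta p(x,\xi)| \leq C_1^{|\alpha|+|\beta|+1}(\alpha!\beta!)^\sigma \japxin^{|\beta|/\sigma-\gamma|\alpha|}\P^{|\alpha|/\sigma-\gamma|\beta|}e^{-c'(\P\japxin)^{1/\theta}}.
\]

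Next, I would transfer this decay to the kernel $K_P(x,y) = \int e^{i(x-y)\xi}p(x,\xi)\,\textit{\dj}\xi$ by replaying the partition-of-unity plus integration-by-parts argument of the preceding kernel theorem, but now without restricting to $\Omega_M = \{|x-y| > M\P\}$. On the off-diagonal region the inequality $|M_{2\theta,\lambda}(x-y)| \geq \varepsilon \exp\{c'\lambda^{1/\theta}|x-y|^{2/\theta}\}$ furnishes the $(\P,\Phi(y))$-decay as in the preceding theorem. On the on-diagonal region $|x-y| \lesssim \P$, the direct $\xi$-integration of $e^{-c'(\P\japxin)^{1/\theta}}$ produces $e^{-a\P^{1/\theta}}$, and on this region $\Phi(y) \sim \P$ by the slow variation (\ref{sv}), so the matching factor $e^{-a\Phi(y)^{1/\theta}}$ comes for free. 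Merging the two regions yields globally, for some $a>0$,
\[
|D_x^\alpha D_y^\beta K_P(x,y)| \leq C^{|\alpha|+|\beta|+1}(\alpha!\beta!)^\theta \exp\{-a(\P^{1/\theta}+\Phi(y)^{1/\theta})\}
\]
for all $(x,y)\in\R^{2n}$ and all multi-indices.

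To finish, a kernel with the above bound defines a continuous map ${\mathcal{M}^\theta_{\Phi,k}}'(\R^n) \to \mathcal{M}^\theta_{\Phi,k}(\R^n)$: for each fixed $x$ the slice $y \mapsto K_P(x,y)$ lies in $\mathcal{M}^\theta_{\Phi,k}(\R^n)$ with seminorms uniformly controlled by $e^{-(a/2)\P^{1/\theta}}$ (splitting $a = a/2 + a/2$ and keeping half the decay for uniformity in $x$). Hence, for $u \in {\mathcal{M}^\theta_{\Phi,k}}'(\R^n)$, the pairing $\la u, K_P(x,\cdot)\ra$ absorbs the $\mathcal{M}^\theta_{\Phi,k}$-seminorms of the slice against the dual norm of $u$, and the retained factor $e^{-(a/2)\P^{1/\theta}}$ places the resulting function of $x$ in $\mathcal{M}^\theta_{\Phi,k}(\R^n)$. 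Since this extends the action of $P$ on test functions, the operator $P$ is $(\Phi,\theta)$-regularizing.

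The main obstacle is the kernel step: the preceding kernel theorem established the analogous decay only on the off-diagonal region $\Omega_M$, because a generic symbol in $AG^{m_1,m_2}_{\Phi,k,\sigma;\sigma,\sigma}$ has no decay in $\xi$. It is precisely the hypothesis $p \sim 0$ that closes this gap, via the super-exponential decay derived in the first step; the numerical constraint $\theta \geq 2(\sigma-1)$ appears exactly at the Stirling optimization as the threshold at which the decay produced by $p \sim 0$ is strong enough to fit the $\theta$-Gevrey scale of the target space $\mathcal{M}^\theta_{\Phi,k}(\R^n)$.
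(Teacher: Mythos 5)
Your proof is correct and follows the same core route as the paper: derive super-exponential symbol decay of the form $C^{|\alpha|+|\beta|+1}(\alpha!\beta!)^\theta\exp\{-a(\Phi(x)\langle\xi\rangle_k)^{1/\theta}\}$ from $p \sim 0$ by minimizing over $N$ subject to the region constraint $\Phi(x)\langle\xi\rangle_k \geq BN^{2\sigma-2}$ (the paper delegates this Stirling-type optimization to Lemma~6.3.10 of Nicola--Rodino), with the hypothesis $\theta \geq 2(\sigma-1)$ entering precisely as you indicate. The paper's proof stops after the symbol estimate, asserting without further detail that this places $K_P$ in $\mathcal{M}^\theta_{\Phi,k}(\mathbb{R}^{2n})$ and hence that $P$ is $(\Phi,\theta)$-regularizing; your explicit on/off-diagonal kernel argument, the use of slow variation to trade $\Phi(x)$-decay for $\Phi(y)$-decay near the diagonal, and the kernel characterization of regularizing operators supply exactly the steps the paper leaves implicit.
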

\begin{proof}[\bfseries{Proof}]
	We will show that the kernel
	\[K_{P}(x, y)=(2 \pi)^{-d} \int e^{i(x-y) \xi} p(x, \xi) d \xi\]
	is in $\mathcal{M}^\theta_{\Phi,k}(\R^{2n})$ impliying that $P$ is $(\Phi,\theta)$-regularizing.
	
	There exist $B,C>0$ such that for every $(x,\xi)\in \R^{2n}$:
	\begin{linenomath*}
		\begin{align*}
			\left|D_{\xi}^{\alpha} D_{x}^{\beta} p(x, \xi)\right| &\leq C_{1}^{|\alpha|+|\beta|+1}(\alpha !\beta !)^{\sigma}\japxin^{-|\alpha|}\P^{-|\beta|} \\
			& \qquad \times \inf _{0 \leq N \leq B_{1}(\japxin\P)^{\frac{1}{2\sigma-2}}} \frac{C^{2 N}(N !)^{2\sigma-2}}{\japxin^{N}\P^{N}}.
		\end{align*}
	\end{linenomath*}
	Using \cite[Lemma 6.3.10]{nicRodi}, we obtain
	\begin{linenomath*}
		\[
		\left|D_{\xi}^{\alpha} D_{x}^{\beta} p(x, \xi)\right| \leq C_{2}^{|\alpha|+|\beta|+1}(\alpha ! \beta !)^{\theta} \exp \left[-a\Big(\P\japxin\Big)^{\frac{1}{\theta}}\right]
		\]
	\end{linenomath*}
	for some $C_2,a>0$.
\end{proof}

We now examine the stability of the classes $OPAG^{m_1,m_2}_{\Phi,k,\sigma;\sigma,\sigma}$ under transposition, composition and construction of parametrices. Let $u\in {\mathcal{M}^{\theta'}_{\Phi,k}}(\R^n)$ and $v \in \mathcal{M}^\theta_{\Phi,k}(\R^n), \theta \geq 2(\sigma-1)$. To relate with Gelfand-Shilov spaces, one can even consider $u\in {\mathcal{S}^{\frac{\theta}{2}}_{\frac{\theta}{2}}}'(\R^n)$ and $v \in \mathcal{S}^{\frac{\theta}{2}}_{\frac{\theta}{2}}(\R^n)$.
\begin{prop}\label{trans}
	Let $P=p(x,D) \in AG^{m_1,m_2}_{\Phi,k,\sigma;\sigma,\sigma}$ and let $P^t$ be the transposed operator defined by
	\begin{linenomath*}
		\begin{equation}\label{tran}
			\la P^tu,v\ra =\la u,Pv \ra.
		\end{equation}
	\end{linenomath*}
	Then, $P^t =Q+R$, where $R$ is $(\Phi,\theta)$-regularizing and $Q=q(x,D)$ is in $AG^{m_1,m_2}_{\Phi,k,\sigma;\sigma,\sigma}$ with
	\begin{linenomath*}
		\[
		q(x,\xi) \sim \sum _{j \geq 0}\sum_{\alpha =j}\frac{1}{\alpha!} \partial_\xi^\alpha D_{x}^\alpha p(x,-\xi) \quad \text{ in } FAG^{m_1,m_2}_{\Phi,k,\sigma;\sigma,\sigma}.
		\]
	\end{linenomath*}
\end{prop}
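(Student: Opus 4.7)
The plan is to mirror the classical pseudodifferential transposition argument, adapting each step to the Gevrey-indexed class $AG^{m_1,m_2}_{\Phi,k,\sigma;\sigma,\sigma}$ and using the symbol calculus tools developed earlier in this appendix, especially Propositions~\ref{sum} and~\ref{zero}.

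First, I would compute the kernel of $P^t$ directly from (\ref{tran}) and from the kernel representation $K_P(x,y) = \int e^{i(x-y)\xi}p(x,\xi)\,\textit{\dj}\xi$. This yields $K_{P^t}(x,y)=K_P(y,x) = \int e^{i(x-y)\xi}p(y,-\xi)\,\textit{\dj}\xi$ after the change $\xi\mapsto-\xi$. Hence the amplitude of $P^t$ is $a(x,y,\xi)=p(y,-\xi)$, and its left-symbol is formally given by the oscillatory integral
$$q_0(x,\xi) \;=\; \iint e^{-iz\cdot\eta}\, p(x+z,-\xi-\eta)\,dz\,\textit{\dj}\eta.$$

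Second, I would Taylor-expand $p(x+z,-\xi-\eta)$ in the first variable around $z=0$ up to order $N$, substitute into $q_0$, and use the identity $z^\alpha e^{-iz\cdot\eta}=i^{|\alpha|}\partial_\eta^\alpha e^{-iz\cdot\eta}$ to integrate by parts in $\eta$. Combined with the reproducing formula $\iint e^{-iz\cdot\eta} g(-\xi-\eta)\,dz\,\textit{\dj}\eta = g(-\xi)$, each Taylor term contributes, up to sign, $\frac{1}{\alpha!}\partial_\xi^\alpha D_x^\alpha p(x,-\xi)$. Grouping by $|\alpha|=j$ produces the claimed formal series in $FAG^{m_1,m_2}_{\Phi,k,\sigma;\sigma,\sigma}$. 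I then invoke Proposition~\ref{sum} to realize this formal series as a genuine symbol $q\in AG^{m_1,m_2}_{\Phi,k,\sigma;\sigma,\sigma}$, and set $Q=q(x,D)$, $R=P^t-Q$.

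Third, I would show that the symbol of $R$ is equivalent to zero in $FAG^{m_1,m_2}_{\Phi,k,\sigma;\sigma,\sigma}$ and conclude via Proposition~\ref{zero} that $R$ is $(\Phi,\theta)$-regularizing. The main obstacle is the remainder estimate: the Taylor remainder $r_N(x,\xi,z,\eta)$ has to be integrated against $e^{-iz\cdot\eta}$ and shown to satisfy, on the exterior region $Q^e_{BN^{2\sigma-2}}$, the bound of Definition~\ref{FS} with constants of the form $C^{|\alpha|+|\beta|+2N}(\alpha!\beta!)^\sigma(N!)^{2\sigma-2}$. This requires a double integration-by-parts scheme: one against $\la\eta\ra^{-1}$ (integrating by parts in $z$) and one against $\Phi(z)^{-1}$ or $\la z\ra^{-1}$ (integrating by parts in $\eta$), with the number of integrations balanced against $N$, $|\alpha|$, and $|\beta|$. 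The Gevrey-type estimates on derivatives of $p$ — which contribute factorial weights $\alpha!^\sigma\beta!^\sigma$ — together with the slowly varying, subadditive, and temperate properties (\ref{sv}), (\ref{tp}), (\ref{sa}), (\ref{Phi}) of $\Phi$ (used to pass from $\Phi(x+z)$ to $\Phi(x)\Phi(z)^{|\cdot|}$), are what ultimately produce the characteristic factor $(N!)^{2\sigma-2}$. The constraint $\P\japxin \geq B N^{2\sigma-2}$ defining the exterior region is precisely what makes the resulting geometric series in $N$ summable, justifying the formal expansion as an honest asymptotic relation within the class.
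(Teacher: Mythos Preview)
Your approach is correct but takes a more hands-on route than the paper. The paper's proof is essentially two lines: it identifies (as you do) that $P^t$ is an amplitude operator with amplitude $a(x,y,\xi)=p(y,-\xi)\in\Pi^{m_1,0,m_2}_{\Phi,k,\sigma;\sigma}$, and then simply invokes Theorem~\ref{amptosym} (the general amplitude-to-symbol reduction), which immediately delivers $P^t=Q+R$ with the stated asymptotic expansion for $q$ and a $(\Phi,\theta)$-regularizing remainder. Your plan instead re-derives the content of Theorem~\ref{amptosym} in this particular instance --- the Taylor expansion in $z$, the oscillatory-integral integration by parts, the remainder estimate on $Q^e_{BN^{2\sigma-2}}$, and the appeal to Propositions~\ref{sum} and~\ref{zero} --- which is exactly the machinery already packaged inside that theorem. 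What your route buys is a self-contained argument that makes the Gevrey combinatorics and the role of the structural properties of $\Phi$ explicit; what the paper's route buys is brevity and modularity, since the hard analytic work is done once in Theorem~\ref{amptosym} and then reused both here and in the proof of the composition result (Theorem~\ref{comp}). One small caveat in your version: Proposition~\ref{zero} is stated for symbols in $AG^{0,0}_{\Phi,k,\sigma;\sigma,\sigma}$, whereas your $R$ a priori lies in $AG^{m_1,m_2}_{\Phi,k,\sigma;\sigma,\sigma}$; this is harmless since the exponential decay coming from $p\sim 0$ absorbs any polynomial weight, but you should note the easy extension.
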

\begin{thm}\label{comp}
	Let $P=p(x,D) \in AG^{m_1,m_2}_{\Phi,k,\sigma;\sigma,\sigma}$ ,$Q=q(x,D)\in AG^{m_1',m_2'}_{\Phi,k,\Phi,\sigma;\sigma,\sigma}$. Then $PQ=T+R$ where $R$ is $(\Phi,\theta)$-regularizing and $T=t(x,D)$ is in $AG^{m_1+m_1',m_2+m_2'}_{\Phi,k,\sigma;\sigma,\sigma}$ with
	\begin{linenomath*}
		\[
		t(x,\xi) \sim \sum _{j \geq 0}\sum_{\alpha =j}\frac{1}{\alpha!} \partial_\xi^\alpha p(x,\xi) D_{x}^\alpha q(x,\xi)
		\]
	\end{linenomath*}
	in $FAG^{m_1+m_1',m_2+m_2'}_{\Phi,k,\sigma;\sigma,\sigma}$.
\end{thm}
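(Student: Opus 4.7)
My plan is to follow the classical scheme used in \cite[Section 6.3]{nicRodi} and in the proof of Proposition~\ref{trans}, adapting the estimates to the anisotropic scale governed by $\P$ and $\japxin$. The starting point is to write the symbol of the composition as an oscillatory integral. For $u \in \mathcal{M}^\theta_{\Phi,k}(\R^n)$, a formal computation gives
\begin{linenomath*}
\[
(PQ)u(x) = \int e^{ix\cdot\xi} t(x,\xi)\hat{u}(\xi)\textit{\dj}\xi, \qquad
t(x,\xi) = \iint e^{-iz\cdot\zeta} p(x,\xi+\zeta) q(x+z,\xi)\,dz\,\textit{\dj}\zeta,
\]
\end{linenomath*}
which makes sense as an iterated oscillatory integral once the appropriate regularizations $\la z\ran^{-2l}, \la\zeta\ran^{-2l}$ (inserted via integration by parts in $\zeta$ and $z$) are in place. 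The first step is to legitimize this identity by the standard cutoff-and-limit argument, exactly as in the treatment of the transpose.

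Next, I would Taylor-expand $p$ in its second argument up to order $N$:
\begin{linenomath*}
\[
p(x,\xi+\zeta) = \sum_{|\alpha|<N}\frac{\zeta^\alpha}{\alpha!}\partial_\xi^\alpha p(x,\xi) + N\sum_{|\alpha|=N}\frac{\zeta^\alpha}{\alpha!}\int_0^1(1-\theta)^{N-1}\partial_\xi^\alpha p(x,\xi+\theta\zeta)\,d\theta.
\]
\end{linenomath*}
Substituting into the oscillatory integral for $t(x,\xi)$, the main terms collapse, by Fourier inversion (via $\zeta^\alpha e^{-iz\cdot\zeta} = (-1)^{|\alpha|}D_z^\alpha e^{-iz\cdot\zeta}$ and integration by parts in $z$), to
\begin{linenomath*}
\[
t_N(x,\xi) := \sum_{|\alpha|<N}\frac{1}{\alpha!}\partial_\xi^\alpha p(x,\xi)\,D_x^\alpha q(x,\xi),
\]
\end{linenomath*}
plus a remainder $r_N(x,\xi)$ coming from the Taylor integral remainder. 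Defining the formal sum $\sum_{j\geq 0}p_j$ with $p_j = \sum_{|\alpha|=j}(\alpha!)^{-1}\partial_\xi^\alpha p \cdot D_x^\alpha q$, one shows $\sum p_j \in FAG^{m_1+m_1',m_2+m_2'}_{\Phi,k,\sigma;\sigma,\sigma}$ by the Leibniz rule, the symbol estimates \eqref{sym3}, and the elementary combinatorial bound $\sum_{|\alpha|=j}(\alpha!)^{-1}\lesssim C^j/j!$, which together give the required factorial bound $C^{|\alpha|+|\beta|+2j}(\alpha!\beta!)^\sigma (j!)^{2\sigma-2}$ on the $j$-th term outside $Q_{Bj^{2\sigma-2}}$. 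Proposition~\ref{sum} then produces a symbol $t \in AG^{m_1+m_1',m_2+m_2'}_{\Phi,k,\sigma;\sigma,\sigma}$ asymptotic to this formal sum, and I would set $R:= PQ - t(x,D_x)$.

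The main obstacle, as usual in such calculi, is to show that $R$ is $(\Phi,\theta)$-regularizing. By Proposition~\ref{zero} it suffices to check that the symbol of $R$ is in $AG^{m_1+m_1',m_2+m_2'}_{\Phi,k,\sigma;\sigma,\sigma}$ and is equivalent to $0$ in the formal class. The $FAG$-equivalence amounts to estimating, for each $N$ and on the exterior region $Q^e_{BN^{2\sigma-2}}$, the Taylor remainder
\begin{linenomath*}
\[
r_N(x,\xi) = N\sum_{|\alpha|=N}\frac{1}{\alpha!}\iint e^{-iz\cdot\zeta}\zeta^\alpha\!\!\int_0^1(1-\theta)^{N-1}\partial_\xi^\alpha p(x,\xi+\theta\zeta)\,d\theta\,q(x+z,\xi)\,dz\,\textit{\dj}\zeta.
\]
\end{linenomath*}
As in \cite[Section 6.3]{nicRodi} and \cite[Appendix A]{AscaCappi2}, one would introduce enough integrations by parts in $\zeta$ (producing powers of $\la z\ran^{-1}$) and in $z$ (producing $\la\zeta\ran^{-1}$) so that all integrals converge absolutely, and then use the symbol estimates for $p$ and $q$ together with the basic inequalities $\Phi(x+z)^s\le 2^{|s|}\P^s\Phi(z)^{|s|}$, $\langle\xi+\theta\zeta\rangle_k^s \le 2^{|s|}\japxin^s\la\zeta\ran^{|s|}$ to pull out the desired factors $\japxin^{m_1+m_1'-N+\cdots}\P^{m_2+m_2'-N+\cdots}$. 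The crux of the estimate is that in $Q^e_{BN^{2\sigma-2}}$ one gains a factor $(\P\japxin)^{-N}$ which, multiplied by the combinatorial factor $\alpha!^\sigma\beta!^\sigma (N!)^{2\sigma-2}$ coming from the Gevrey estimates on $p$ and $q$, yields exactly the growth required by Definition~\ref{FS}; this is where the two parameters $\sigma\geq 3$ and the Gevrey indices $(\sigma,\sigma)$ enter decisively. Once this equivalence in $FAG$ is established, Proposition~\ref{zero} produces the $(\Phi,\theta)$-regularization of $R$, completing the proof.
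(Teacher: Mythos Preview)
Your argument is correct, but the paper takes a different and more modular route. Rather than writing the composed symbol directly as an oscillatory integral and Taylor-expanding $p(x,\xi+\zeta)$, the paper first uses the double transpose $Q=(Q^t)^t$ together with Proposition~\ref{trans} and Theorem~\ref{amptosym} to rewrite $Q$, modulo a $(\Phi,\theta)$-regularizing operator, in right-quantized form $Q_1u(x)=\int e^{i(x-y)\xi}q_1(y,\xi)u(y)\,dy\,\textit{\dj}\xi$. Then $PQ_1$ is automatically an amplitude operator with amplitude $p(x,\xi)q_1(y,\xi)\in\Pi^{m_1+m_1',m_2,m_2'}_{\Phi,k,\sigma;\sigma}$, and a single further application of Theorem~\ref{amptosym} delivers both $T$ and the asymptotic expansion. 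In the paper's scheme all the oscillatory-integral remainder estimates you sketch are hidden once and for all inside Theorem~\ref{amptosym} (whose proof is only referenced to \cite{nicRodi}), so nothing like your estimate of $r_N$ on $Q^e_{BN^{2\sigma-2}}$ needs to be redone. Your direct approach is the standard alternative and is in the spirit of the conjugation proof (Theorem~\ref{conju}); it is more self-contained but duplicates work that the paper has already outsourced to the amplitude-to-symbol reduction.
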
 

To prove Proposition \ref{trans} and Theorem \ref{comp}, we introduce more general classes of symbols, called amplitudes. Let $(m_1,m_2,m_3) \in \R^3$.
\begin{defn}
	We denote by $\Pi_{\Phi,k,\sigma;\sigma}^{m_1,m_2,m_3}$ the Banach space of all symbols $a(x,y,\xi) \in C^\infty(\R^3)$ satisfying for some $C>0$ the following estimate 
	\begin{linenomath*}
		\begin{align*}
			\sup_{\alpha,\beta,\delta \in \mathbb{Z}_+^n} &\sup_{(x,y,\xi)\in \R^3} C^{-|\alpha|-|\beta|-|\delta|}(\alpha!\beta!\delta!)^{-\sigma} \japxin^{-m_1+\gamma|\alpha|-(|\beta|+|\delta|)/\sigma}\\
			& \P^{-m_2+\gamma|\beta|-|\alpha|/\sigma} \Phi( y )^{-m_3+\gamma|\delta|-|\alpha|/\sigma}|D_\xi^\alpha D_x^\beta D_y^\delta a(x,y,\xi)| < + \infty.
		\end{align*}
	\end{linenomath*}
\end{defn}
Given $a \in \Pi^{m_1,m_2,m_3}_{\Phi,k,\sigma;\sigma}$, we associate to $a$ the pseudodifferential operator defined by
\begin{linenomath*}
	\begin{equation}\label{ampl}
		Au(x) =\iint\limits_{\R^{2n}}e^{i(x-y)\cdot\xi}a(x,y,\xi){u}(y)dy\textit{\dj}\xi, \quad u \in \mathcal{M}^\theta_\Phi(\R^n)
	\end{equation}
\end{linenomath*}
\begin{thm}\label{amptosym}
	Let $A$ be an operator defined by an amplitude $a \in \Pi_{\Phi,k,\sigma;\sigma}^{m_1,m_2,m_3}$, $(m_1,m_2,m_3) \in \R^3$. Then we may write $A=P+R$, where $R$ is a $(\Phi,\theta)$-regularizing operator  and $P=p(x,D)$ is in $AG^{m_1,m_2+m_3}_{\Phi,k,\sigma;\sigma,\sigma}$ with $p \sim \sum_{j \geq 0}{p_j}$ where
	\begin{linenomath*}
		\[
		p_j(x,\xi) = \sum _{|\alpha|=j}\frac{1}{\alpha!} \partial_\xi^\alpha D_{y}^\alpha a(x,y,\xi)|_{y=x}.
		\]
	\end{linenomath*}
\end{thm}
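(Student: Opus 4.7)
The plan is to apply a Taylor expansion of $a(x,y,\xi)$ in the $y$-variable at $y=x$, convert the resulting powers of $(y-x)$ into $\xi$-derivatives via integration by parts in the oscillatory integral, and then invoke Proposition~\ref{sum} and Proposition~\ref{zero} to absorb the remainders. This is the standard scheme used for passing from amplitudes to symbols (cf.\ the analogous argument in \cite[Sec.\ 6.3]{nicRodi}), the main novelty being the bookkeeping forced by the two-scale weights $\japxin$ and $\P$ and the Gevrey order $\sigma$.

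First I would write, for every $N\in\mathbb{Z}_+$, the Taylor expansion with integral remainder
\begin{linenomath*}
\[
a(x,y,\xi)=\sum_{|\alpha|<N}\frac{(y-x)^\alpha}{\alpha!}(\partial_y^\alpha a)(x,x,\xi)+N\sum_{|\alpha|=N}\frac{(y-x)^\alpha}{\alpha!}\int_0^1(1-\theta)^{N-1}(\partial_y^\alpha a)(x,x+\theta(y-x),\xi)\,d\theta.
\]
\end{linenomath*}
Plugging this into \eqref{ampl} and using $(y-x)^\alpha e^{i(x-y)\xi}=(-D_\xi)^\alpha e^{i(x-y)\xi}$ followed by integration by parts in $\xi$, the finite sum gives precisely the operator with symbol $\sum_{|\alpha|<N}p_\alpha(x,\xi)$ where $p_\alpha(x,\xi)=\frac{1}{\alpha!}\partial_\xi^\alpha D_y^\alpha a(x,y,\xi)|_{y=x}$. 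A direct check using the amplitude bounds shows that the formal sum $\sum_j p_j$ (grouping terms by $|\alpha|=j$) lies in $FAG^{m_1,m_2+m_3}_{\Phi,k,\sigma;\sigma,\sigma}$: the loss of $\gamma|\alpha|$ in $\japxin$ comes from $\partial_\xi^\alpha$, the loss of $\gamma|\alpha|$ in $\P$ comes from combining the $\partial_y^\alpha$ estimate with the fact that the $y=x$ restriction merges the $\P$ and $\Phi(y)$ weights, and the factorial $(\alpha!)^\sigma$ is the required Gevrey gain. By Proposition~\ref{sum} there is then a genuine symbol $p\in AG^{m_1,m_2+m_3}_{\Phi,k,\sigma;\sigma,\sigma}$ with $p\sim\sum_j p_j$.

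It remains to show that the remainder operator $A-P$ is $(\Phi,\theta)$-regularizing. For each fixed $N$, the contribution of the integral remainder of the Taylor expansion, together with the difference $p-\sum_{|\alpha|<N}p_\alpha$ produced by the asymptotic realization, is represented by an amplitude $a_N(x,y,\xi)$ whose derivatives in $(x,y,\xi)$ can be estimated by $C^{|\alpha|+|\beta|+|\delta|+2N}(\alpha!\beta!\delta!)^\sigma (N!)^{2\sigma-2}\japxin^{m_1-N-\gamma|\alpha|+\cdots}\P^{m_2+m_3-N-\gamma(|\beta|+|\delta|)+\cdots}$ on the exterior region $Q^e_{BN^{2\sigma-2}}$. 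I would then choose $N$ optimally as a function of $\P\japxin$ (as in \cite[Lem.\ 6.3.10]{nicRodi}) to convert this polynomial decay in $N$ into an exponential estimate of the form $\exp\{-a(\P\japxin)^{1/\theta}\}$ on the derivatives of the kernel of the remainder, for $\theta\geq 2(\sigma-1)$. This is essentially the same argument as in the proof of Proposition~\ref{zero}, applied to the amplitude kernel
\begin{linenomath*}
\[
K(x,y)=\int e^{i(x-y)\xi}a_N(x,y,\xi)\,\textit{\dj}\xi
\]
\end{linenomath*}
after one further integration by parts in $\xi$ against $\la x-y\ra^{-2l}$ to ensure absolute convergence.

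The main obstacle I anticipate is the precise verification that the remainder amplitude stays in $\Pi^{\cdot,\cdot,\cdot}_{\Phi,k,\sigma;\sigma}$ with losses matching the formal sum structure of Definition~\ref{FS}, because one must track simultaneously the factorial $(N!)^{2\sigma-2}$ from the symbolic composition, the combinatorial factor from the Leibniz rule applied to the excision functions $\varphi_j$ hidden in Proposition~\ref{sum}, and the two independent weights $\P$ and $\Phi(y)$ (which only coalesce after the $y=x$ restriction). Once these bookkeeping estimates are in place, the rest is a mechanical application of Propositions~\ref{sum} and~\ref{zero}, and the theorem follows.
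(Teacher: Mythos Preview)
Your proposal is correct and follows precisely the approach the paper indicates: the paper does not give its own proof of Theorem~\ref{amptosym} but simply refers to the standard argument of \cite[Theorem~6.3.14]{nicRodi}, which is exactly the Taylor-expansion-in-$y$ / integration-by-parts scheme you outline, combined with Propositions~\ref{sum} and~\ref{zero}. Your identification of the main technical point---tracking the factorial gain $(N!)^{2\sigma-2}$ and the merging of the $\P$ and $\Phi(y)$ weights at $y=x$---is accurate, and once this bookkeeping is done the argument is indeed mechanical.
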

The proof of this result uses the similar standard arguments available in the proof of Theorem $6.3.14$ in \cite{nicRodi}. For the sake of conciseness, we omit the details.

\begin{proof}[\bfseries{Proof of Theorem~\ref{trans}}]
	From (\ref{tran}), $P^{t}$ is defined as
	\begin{linenomath*}
		\[
		P^{t} u(x)=\int e^{i(x-y) \xi} p(y,-\xi) u(y) d y \textit{\dj} \xi, \quad u \in \mathcal{M}^{\theta}_{\Phi,k}\left(\mathbb{R}^{n}\right)
		\]
	\end{linenomath*}
	Observe that $ P^{t}$ is an operator of the form $(\ref{ampl})$ with amplitude $p(y,-\xi) .$ By Theorem $(\ref{amptosym})$, $P^{t}=Q+R$ where $R$ is $(\Phi,\theta)$-regularizing and $Q=q(x, D) \in AG^{m_1,m_2}_{\Phi,k,\sigma;\sigma,\sigma}$
	with
	\begin{linenomath*}
		\[
		q(x, \xi) \sim \sum_{j \geq 0} \sum_{|\alpha|=j}(\alpha !)^{-1} \partial_{\xi}^{\alpha} D_{x}^{\alpha} p(x,-\xi).
		\]
	\end{linenomath*}
\end{proof}

\begin{proof}[\bfseries{Proof of Theorem~\ref{comp}}]
	We can write $Q=(Q^{t})^{t} .$ Then, by Theorem (\ref{amptosym}) and Proposition (\ref{trans}), $Q=Q_{1}+R_{1},$ where $R_{1}$ is $(\Phi,\theta)$-regularizing and
	\begin{linenomath*}
		\begin{equation}\label{eq3}
			Q_{1} u(x)=\int e^{i(x-y) \xi} q_{1}(y, \xi) u(y) d y \textit{\dj} \xi
		\end{equation}
	\end{linenomath*}
	with $q_{1}(y, \xi) \in A G_{\Phi,k,\sigma;\sigma,\sigma}^{m_1^{\prime}, m_2^{\prime}}, q_{1}(y, \xi) \sim \sum_{\alpha}(\alpha !)^{-1} \partial_{\xi}^{\alpha} D_{y}^{\alpha} q(y,-\xi) .$ From $(\ref{eq3})$ it
	follows that
	\begin{linenomath*}
		\[
		\widehat{Q_1 u}(\xi)=\int e^{-i y \xi}q_1(y, \xi) u(y) d y, \quad u \in \mathcal{M}^{\theta}_{\Phi,k}(\mathbb{R}^{n})
		\]
	\end{linenomath*}
	from which we deduce that
	\begin{linenomath*}
		\[
		P Q u(x)=\int e^{i(x-y)\xi} p(x, \xi) q_{1}(y, \xi) u(y) d y \textit{\dj} \xi+P R_{1} u(x).
		\]
	\end{linenomath*}
	We observe that $p(x, \xi) q_{1}(y, \xi) \in \Pi_{\Phi,k,\sigma;\sigma}^{m_1+m_1^{\prime}, m_2, m_2^{\prime}}$. Applying Theorem (\ref{amptosym}), we obtain that
	\begin{linenomath*}
		\[
		P Q u(x)=T u(x)+R u(x)
		\]
	\end{linenomath*}
	where $R$ is $(\Phi,\theta)$-regularizing and $T=t(x, D) \in OPAG_{\Phi,k,\sigma;\sigma,\sigma}^{m_1+m_1^{\prime}, m_2+m_2^{\prime}}$ with
	\begin{linenomath*}
		\[
		t(x, \xi) \sim \sum_{j \geq 0} \sum_{|\alpha|=j}(\alpha !)^{-1} \partial_{\xi}^{\alpha} p(x, \xi) D_{x}^{\alpha} q(x, \xi)
		\]
	\end{linenomath*}
	in $FAG^{m_1+m_1^{\prime}, m_2+m_2^{\prime}}_{\Phi,k,\sigma;\sigma,\sigma}$.
\end{proof}

We now state the notion of ellipticity for elements of $OPAG_{\Phi,k,\sigma;\sigma,\sigma}^{m_1, m_2}$.

\begin{defn}
	A symbol $p \in AG_{\Phi,k,\sigma;\sigma,\sigma}^{m_1, m_2}$ is said to be $G_\Phi$-elliptic if there exist $B, C=C(k)>0$ such that
	\begin{linenomath*}
		\[
			|p(x, \xi)| \geq C \japxin^{m_1} \P^{m_2}, \quad \forall (x, \xi) \in Q_{B}^{e}.
		\]
	\end{linenomath*}
\end{defn}
\begin{thm}\label{ell}
	If $p \in AG_{\Phi,k,\sigma;\sigma,\sigma}^{m_1, m_2}$ is $G_\Phi$-elliptic and $P=p(x, D),$ then there erists $E \in AG_{\Phi,k,\sigma;\sigma,\sigma}^{-m_1, -m_2}$ such that $E P=I+R_{1}, P E=I+R_{2},$ where $R_{1}, R_{2}$ are
	$(\Phi,\theta)$-regularizing operators.
\end{thm}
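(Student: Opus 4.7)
The plan is to construct the parametrix by the classical formal series method, adapted to the symbol class $AG^{m_1,m_2}_{\Phi,k,\sigma;\sigma,\sigma}$ and its formal-sum counterpart $FAG^{m_1,m_2}_{\Phi,k,\sigma;\sigma,\sigma}$. First I would fix constants $B' > B$ and choose a $\sigma$-Gevrey excision function $\chi(x,\xi) \in C^\infty(\R^{2n})$ with $0 \le \chi \le 1$, $\chi \equiv 0$ on $Q_{B}$ and $\chi \equiv 1$ on $Q_{B'}^e$, satisfying estimates of the same type as \eqref{exci} (such a $\chi$ exists by the same construction as in Proposition \ref{sum}). Define the candidate zeroth-order term
\[
e_0(x,\xi) = \frac{\chi(x,\xi)}{p(x,\xi)},
\]
which is well-defined everywhere because $p$ is bounded away from zero on the support of $\chi$ by $G_\Phi$-ellipticity.

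The first technical step is to verify that $e_0 \in AG^{-m_1,-m_2}_{\Phi,k,\sigma;\sigma,\sigma}$. This follows from the ellipticity bound together with a Faà di Bruno type argument applied to $1/p$: derivatives of $e_0$ are estimated by repeatedly differentiating the identity $e_0 p = \chi$ and solving for $\partial_\xi^\alpha D_x^\beta e_0$, which yields the required Gevrey bounds with factorial weights $(\alpha!\beta!)^\sigma$ and the correct powers of $\japxin$ and $\P$.

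Next I would build the higher-order correctors. Guided by the composition formula of Theorem \ref{comp}, I define recursively, for $N \ge 1$,
\[
e_N(x,\xi) = -\frac{\chi(x,\xi)}{p(x,\xi)} \sum_{j=0}^{N-1} \sum_{|\alpha| = N-j} \frac{1}{\alpha!}\, \partial_\xi^\alpha e_j(x,\xi)\, D_x^\alpha p(x,\xi),
\]
so that $e_N$ inherits a Gevrey estimate with an extra decay of $\japxin^{-N}\P^{-N}$ relative to $e_0$, with a combinatorial factor $(N!)^{2\sigma - 2}$ coming from the $|\alpha|=N-j$ sum and the products of $\sigma$-Gevrey bounds on $e_j$ and $D_x^\alpha p$. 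The crux is to verify, by induction on $N$, that $\sum_{N \ge 0} e_N \in FAG^{-m_1,-m_2}_{\Phi,k,\sigma;\sigma,\sigma}$ in the sense of Definition \ref{FS}, with the cut-off $Q^e_{BN^{2\sigma-2}}$ absorbing the loss $(N!)^{2\sigma-2}$. By Proposition \ref{sum}, there then exists $e \in AG^{-m_1,-m_2}_{\Phi,k,\sigma;\sigma,\sigma}$ with $e \sim \sum_{N\ge 0} e_N$.

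Finally I set $E = e(x,D) \in OPAG^{-m_1,-m_2}_{\Phi,k,\sigma;\sigma,\sigma}$. Using Theorem \ref{comp}, the composition $EP$ has symbol $\sim \sum_j \sum_{|\alpha|=j} (\alpha!)^{-1} \partial_\xi^\alpha e\, D_x^\alpha p$; by construction of the $e_N$, this asymptotic sum equals $\chi$ modulo a formal sum equivalent to zero in $FAG^{0,0}_{\Phi,k,\sigma;\sigma,\sigma}$. Since $1 - \chi$ is compactly supported in $(x,\xi)$ relative to $Q_{B'}$, it lies in $AG^{-\infty,-\infty}_{\Phi,k,\sigma;\sigma,\sigma}$, hence is $\sim 0$, and by Proposition \ref{zero} the associated operator is $(\Phi,\theta)$-regularizing. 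Therefore $EP = I + R_1$ with $R_1$ $(\Phi,\theta)$-regularizing. The identity $PE = I + R_2$ is proved in exactly the same way with left and right composition interchanged (equivalently, by applying the same argument to $P^t$ via Proposition \ref{trans}). The main obstacle I anticipate is the bookkeeping in the inductive step that shows $\sum e_N \in FAG^{-m_1,-m_2}_{\Phi,k,\sigma;\sigma,\sigma}$: one must track the growth $(N!)^{2\sigma-2}$ coming from $N$-fold composition of $\sigma$-Gevrey symbols and match it against the admissible growth on the exterior region $Q^e_{BN^{2\sigma-2}}$, which is precisely why the exponent $2\sigma-2$ appears in Definition \ref{FS}.
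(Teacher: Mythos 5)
Your proposal is correct and reproduces essentially the same argument the paper has in mind: the paper deliberately omits the details, merely citing Propositions \ref{sum} and \ref{comp} and deferring to Theorem 6.3.16 of \cite{nicRodi}, and your construction (excision of $Q_B$, Neumann-type recursion for $e_N$, realization by Proposition \ref{sum}, composition by Theorem \ref{comp}, and disposal of $1-\chi$ via Proposition \ref{zero}) is exactly that construction adapted to $AG^{m_1,m_2}_{\Phi,k,\sigma;\sigma,\sigma}$. The only point worth tightening is the phrase ``compactly supported in $(x,\xi)$'': when $\Phi$ is bounded, $Q_{B'}$ is unbounded in $x$, but the conclusion $1-\chi \in AG^{-\infty,-\infty}_{\Phi,k,\sigma;\sigma,\sigma}$ still holds because $\P\japxin$ is bounded on its support, which is all that the definition requires.
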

The above theorem can be easily proved using Proposition \ref{sum} and Proposition \ref{comp}. For the sake of conciseness, we omit the proof and  refer the reader to Theorem $6.3.16$ in \cite{nicRodi} for the details.

As an immediate consequence of Theorem $\ref{ell}$ we obtain the following result of global regularity.

\begin{cor}
	Let $p \in AG_{\Phi,k,\sigma;\sigma,\sigma}^{m_1, m_2}$ be $G_\Phi$-elliptic and let $f \in \mathcal{M}_{\Phi,k}^{\theta}(\R^n)$. If $u \in {\mathcal{M}_{\Phi,k}^{\theta}}^\prime(\R^n)$ is a solution of the equation
	\begin{linenomath*}
		\[	
		P u=f,		
		\]
	\end{linenomath*}
	then $u \in \mathcal{M}_{\Phi,k}^{\theta}(\R^n)$.
\end{cor}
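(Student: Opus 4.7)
The plan is to derive $u \in \mathcal{M}^{\theta}_{\Phi,k}(\R^n)$ directly from the existence of a parametrix for $P$. By Theorem \ref{ell}, since $p$ is $G_\Phi$-elliptic, there exists $E \in OPAG^{-m_1,-m_2}_{\Phi,k,\sigma;\sigma,\sigma}$ with $EP = I + R_1$ and $PE = I + R_2$, where $R_1, R_2$ are $(\Phi,\theta)$-regularizing.

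First, I would apply the left parametrix $E$ to both sides of the equation $Pu = f$, obtaining $E(Pu) = Ef$, which rewrites as
\begin{equation*}
u = Ef - R_1 u.
\end{equation*}
It then suffices to show that both terms on the right belong to $\mathcal{M}^{\theta}_{\Phi,k}(\R^n)$.

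For the first term, since $E$ is in $OPAG^{-m_1,-m_2}_{\Phi,k,\sigma;\sigma,\sigma}$, the continuity statements recorded in Section \ref{Symbols} (and established through the calculus developed in Appendix II, notably the continuity of operators in this class on the relevant test function spaces) ensure that $E$ maps $\mathcal{M}^{\theta}_{\Phi,k}(\R^n)$ continuously into itself for $\theta \geq \sigma$. Hence $Ef \in \mathcal{M}^{\theta}_{\Phi,k}(\R^n)$. For the second term, by the very definition of $(\Phi,\theta)$-regularizing, $R_1$ extends to a continuous map ${\mathcal{M}^{\theta}_{\Phi,k}}'(\R^n) \to \mathcal{M}^{\theta}_{\Phi,k}(\R^n)$, so $R_1 u \in \mathcal{M}^{\theta}_{\Phi,k}(\R^n)$ because $u \in {\mathcal{M}^{\theta}_{\Phi,k}}'(\R^n)$ by hypothesis.

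Combining these two facts with the identity $u = Ef - R_1 u$ immediately yields $u \in \mathcal{M}^{\theta}_{\Phi,k}(\R^n)$, which is the desired conclusion. There is no real obstacle here: all the analytic work has been absorbed into Theorem \ref{ell} and the definition of $(\Phi,\theta)$-regularizing operators. The only point one needs to be careful about is verifying that the index $\theta$ used in the regularity statement is compatible with the one assumed in Proposition \ref{zero} (namely $\theta \geq 2(\sigma-1)$), which is already implicit in the construction of the parametrix; provided $\theta \geq \sigma$ and $\theta \geq 2(\sigma-1)$, the argument closes without further estimates.
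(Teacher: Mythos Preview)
Your argument is correct and is precisely the standard parametrix argument the paper has in mind; the paper does not write out a proof, stating only that the corollary is ``an immediate consequence of Theorem~\ref{ell}'', and your proof is exactly that immediate consequence. Your remark about the compatibility condition $\theta \geq 2(\sigma-1)$ coming from Proposition~\ref{zero} is also appropriate, since this is the standing hypothesis under which the parametrix construction yields $(\Phi,\theta)$-regularizing remainders.
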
	

This completes the calculus for the class $OPAG^{m_1,m_2}_{\Phi,k,\sigma;\sigma,\sigma}$. 

\addcontentsline{toc}{section}{Appendix III: Calculus for the Class $OPAG^{m_1,m_2}_{\Phi,k;\sigma}$}

	\section*{Appendix III: Calculus for the Class $OPAG^{m_1,m_2}_{\Phi,k;\sigma}$}
Using similar techniques as in the case of calculus of  $OPAG^{m_1,m_2}_{\Phi,k,\sigma;\sigma,\sigma}$ and referring to Appendix A in \cite{AscaCappi2}, one can also develop a calculus for the class $OPAG^{m_1,m_2}_{\Phi,k;\sigma}$. We start with defining the notion of formal sums in this context.
\begin{defn} \label{FS2}
	We denote by $FAG^{m_1,m_2}_{\Phi,k;\sigma}$ the space of all formal sums $\sum_{j \geq 0}p_j(x,\xi)$ such that $p_j(x,\xi) \in C^\infty(\R^{2n})$
	such that for all $ j\geq 0$ and following conditions hold
	\begin{enumerate}[label=\roman*)]
		\item There exist $B,C>0$ such that
		\begin{linenomath*}
			\begin{align}\label{fs1}
				\sup_{j \geq 0}\sup_{\alpha,\beta \in \mathbb{Z}_+^n} &\sup_{(x,\xi) \in  Q_{B(j+|\alpha|)^{\sigma}}^e} C^{-|\alpha|-|\beta|-2j}(\alpha!)^{-1} (\beta!j!)^{-\sigma} \nonumber\\
				&\japxin^{-m_1+j+|\alpha|} \P^{-m_2+j+|\beta|} |D_\xi^\alpha \partial_x^\beta p_j(x,\xi)| < + \infty.
			\end{align}
		\end{linenomath*}
		\item For every $B_0>0$, there exists $C>0$ such that
		\begin{linenomath*}
			\begin{align}\label{fs2}
				\sup_{j \geq 0}\sup_{\alpha,\beta \in \mathbb{Z}_+^n} &\sup_{(x,\xi) \in  Q_{B_0(j+|\alpha|)^{\sigma}}} C^{-|\alpha|-|\beta|-2j} (\alpha!\beta!j!)^{-\sigma} 
				|D_\xi^\alpha \partial_x^\beta p_j(x,\xi)| < + \infty.
			\end{align}
		\end{linenomath*}
	\end{enumerate}		
\end{defn}
Every symbol $p \in AG^{m_1,m_2}_{\Phi,k;\sigma}$ can be identified with an element of $FAG^{m_1,m_2}_{\Phi,k;\sigma},$ by setting $p_0=p$ and $p_j=0$ for $j \geq 1$.
\begin{defn}
	Two sums $\sum_{j \geq 0}p_j$ , $\sum_{j \geq 0}p_j'$ in $FAG^{m_1,m_2}_{\Phi,k;\sigma}$ are said to be equivalent if there exist constants $A,B>0$ such that 
	\begin{linenomath*}
		\begin{align*}
			\sup_{N\in \mathbb{Z_+}} &\sup_{\alpha,\beta \in \mathbb{N}^n} \sup_{(x,\xi) \in  Q_{B(N+|\alpha|)^{\sigma}}^e} C^{-|\alpha|-|\beta|-2N}(\alpha!)^{-1} (\beta!N!)^{-\sigma} \\
			&\japxin^{-m_1+N+|\alpha|} \P^{-m_2+N+|\beta|} |D_\xi^\alpha \partial_x^\beta \sum_{j<N} (p_j-p_j')| < + \infty,
		\end{align*}
	\end{linenomath*}
	and we write $\sum_{j \geq 0}p_j \sim \sum_{j \geq 0}p_j'$.
\end{defn}

\begin{prop}
	Given $\sum_{j \geq 0}p_j \in FAG^{m_1,m_2}_{\Phi,k;\sigma}$, there exists a symbol $p \in AG^{m_1,m_2}_{\Phi,k;\sigma}$ such that
	\begin{linenomath*}
		\[
		p \sim \sum_{j \geq 0}p_j \quad \text{ in } \: FAG^{m_1,m_2}_{\Phi,k;\sigma}.
		\]
	\end{linenomath*}
\end{prop}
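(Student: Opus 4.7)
My plan is to follow the excision-function construction used in the proof of Proposition \ref{sum}, with parameters adapted to the mixed analytic--Gevrey thresholds of Definition \ref{FS2}. The idea is to define $p = \sum_j \varphi_j p_j$, where the $\varphi_j$ are cutoff functions that vanish on the region where $p_j$ only enjoys the weaker interior estimate (\ref{fs2}) and equal $1$ on a slightly smaller exterior region governing (\ref{fs1}).

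First, I would construct an excision function $\varphi \in C^\infty(\R^{2n})$ with $0 \leq \varphi \leq 1$, $\varphi \equiv 0$ on $Q_1$ and $\varphi \equiv 1$ on $Q_2^e$, satisfying the mixed analytic--Gevrey estimate
\[
|D_\xi^\alpha D_x^\beta \varphi(x,\xi)| \leq C^{|\alpha|+|\beta|+1}\,\alpha!\,(\beta!)^\sigma,
\]
reflecting the analytic regularity in $\xi$ demanded by (\ref{fs1}). For a parameter $R > 0$ to be chosen large at the end and $j \geq 1$, I would set $\varphi_j(x,\xi) = \varphi(x/\mu_j, \xi/\mu_j)$, with the scaling $\mu_j \sim \sqrt{R}\,j^{\sigma/2}$ calibrated so that $\mathrm{supp}(\varphi_j)\subset Q^e_{cRj^\sigma}$ and $\varphi_j \equiv 1$ on $Q^e_{c'Rj^\sigma}$ for some $c<c'$; this matches the threshold $Bj^\sigma$ in (\ref{fs1}) when $\alpha=0$. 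Define $p = \varphi_0 p_0 + \sum_{j\geq 1}\varphi_j p_j$; local finiteness of the sum guarantees $p \in C^\infty(\R^{2n})$.

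To verify $p \in AG^{m_1,m_2}_{\Phi,k;\sigma}$, I would Leibniz-expand $D_\xi^\alpha D_x^\beta p$ and estimate each summand using (\ref{fs1}) on $\mathrm{supp}(\varphi_j)$ together with the derivative bound $|D_\xi^{\alpha'}D_x^{\beta'}\varphi_j| \leq C^{|\alpha'|+|\beta'|+1}\alpha'!(\beta'!)^\sigma \mu_j^{-|\alpha'|-|\beta'|}$. The key cancellation is that $(\P\japxin)^{-j}$ from (\ref{fs1}), combined with $\P\japxin \gtrsim R j^\sigma$ on $\mathrm{supp}(\varphi_j)$, yields a factor $(cR j^\sigma)^{-j}$ which, after absorbing the combinatorial $(j!)^\sigma$ via Stirling, becomes a convergent geometric series $\sum_j (C_1/R)^j$. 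The analytic growth $\alpha!$ (not $(\alpha!)^\sigma$) in $\xi$-derivatives is preserved because both $\varphi$ (by construction) and $p_j$ (by (\ref{fs1})) carry only an $\alpha!$ factor, so the Leibniz product respects this regularity.

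For the equivalence $p \sim \sum p_j$, I would write
\[
p - \sum_{j<N}p_j = -\sum_{j<N}(1-\varphi_j)p_j + \sum_{j\geq N}\varphi_j p_j.
\]
The first sum is supported where $\P\japxin \lesssim R N^\sigma$, so on $Q^e_{B(N+|\alpha|)^\sigma}$ (for $B$ chosen large enough relative to $R$) it vanishes, while the tail $\sum_{j\geq N}\varphi_j p_j$ is handled by the estimate from the previous paragraph with the summation index shifted to $N$, producing the required decay. The main obstacle will be the careful bookkeeping of the mixed analytic/Gevrey factorials: Leibniz mixes $D_\xi^{\alpha'}\varphi_j$ and $D_\xi^{\alpha-\alpha'}p_j$, and one must verify that the resulting factorials recombine into $\alpha!\,(\beta!\,j!)^\sigma$ rather than $(\alpha!)^\sigma(\beta!\,j!)^\sigma$. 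This is precisely where the analytic-in-$\xi$ estimate imposed on $\varphi$ and the chosen scaling $\mu_j$ play an essential role; the corresponding step in the proof of Proposition \ref{sum} is simpler because there all estimates are Gevrey-$\sigma$ in both variables.
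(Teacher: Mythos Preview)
Your overall architecture (excise, sum, check convergence via a geometric factor $(C/R)^j$) matches the paper, but the first step contains a genuine obstruction. You ask for a fixed excision function $\varphi\in C^\infty(\R^{2n})$ with $\varphi\equiv 0$ on $Q_1$, $\varphi\equiv 1$ on $Q_2^e$, and
\[
|D_\xi^\alpha D_x^\beta \varphi(x,\xi)|\le C^{|\alpha|+|\beta|+1}\,\alpha!\,(\beta!)^\sigma .
\]
No such function exists: the bound $C^{|\alpha|}\alpha!$ on all $\xi$-derivatives forces $\varphi(x,\cdot)$ to be real analytic in $\xi$, while $\varphi(x,\cdot)$ must vanish on an open $\xi$-ball and equal $1$ outside a larger ball. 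An analytic function vanishing on an open set vanishes identically, so the cutoff cannot be analytic in $\xi$. Consequently the Leibniz bookkeeping you describe---which relies on the cutoff contributing only an $\alpha'!$ factor---cannot be carried out with a single rescaled $\varphi$.

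The paper avoids this by building a \emph{$j$-dependent} sequence of cutoffs $\varphi_j=(1-\psi_j(\xi))(1-\tilde\psi_j(x))$, where the $\psi_j$ satisfy a two-regime estimate: for $|\alpha|\le 4j$ one has the factorial-free bound $|D_\xi^\alpha\psi_j|\le C^{|\alpha|}(Rj^{\sigma-1})^{-|\alpha|}$, while for $|\alpha|>4j$ one falls back to Gevrey-$\sigma$ bounds. The point is that on the support of $D\varphi_j$, intersected with the exterior region $Q^e_{4R^2|\alpha|^\sigma}$ on which the $AG^{m_1,m_2}_{\Phi,k;\sigma}$ estimate must be verified, the inequality $4R^2|\alpha|^\sigma\le \P\japxin\le 16R^2 j^\sigma$ forces $|\alpha|\le 4j$, so only the ``almost analytic'' regime is ever used and the $\alpha!$ growth is recovered without requiring any individual cutoff to be genuinely analytic. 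Your rescaling $\varphi(x/\mu_j,\xi/\mu_j)$ with a fixed $\varphi$ cannot produce this effect: what is needed is precisely the Ehrenpreis-type construction of cutoffs whose first $4j$ derivatives obey analytic bounds while higher derivatives revert to Gevrey.
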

\begin{proof}[\bfseries{Proof}]		
	Following \cite{AscaCappi2}, to construct the symbol $p$, we consider two types of cut-off functions. For a fixed $R>0$, we can find a sequence of functions $\psi_{j}(\xi), j=0,1,2, \ldots,$ such that $0 \leqslant \psi_{j}(\xi) \leqslant 1$ for all $\xi \in \R^{n},\psi_{j}(\xi)=1$ if $\japxin \leqslant 2 R \sup \left(j^{\frac{\sigma}{2}}, 1\right), \psi_{j}(\xi)=0$ if $\japxin \geqslant 4 R \sup \left(j^{\frac{\sigma}{2}}, 1\right)$
	and satisfying the following estimates:
	\begin{linenomath*}
		\[
		\left|\partial_{\xi}^{\alpha} \psi_{j}(\xi)\right| \leqslant C_{1}^{|\alpha|}\left(R \sup \left(j^{\sigma-1}, 1\right)\right)^{-|\alpha|} \quad \text { if }|\alpha| \leqslant 4 j,
		\]
	\end{linenomath*}
	and
	\begin{linenomath*}
		\[
		\left|\partial_{\xi}^{\alpha} \psi_{j}(\xi)\right| \leqslant C_{2}^{|\alpha|+1}(\alpha !)^{\sigma}\left(R \sup \left(j^{\sigma}, 1\right)\right)^{-|\alpha|} \quad \text { if }|\alpha|>4 j,
		\]
	\end{linenomath*}
	for some positive constants $C_{1}, C_{2}$ independent of $\alpha, R, j$.
	
	Similarly, we can choose a sequence of functions $\tilde{\psi}_{j}(x) \in G_{0}^{\sigma}(\R^{n}), j=0,1,2, \ldots,$ supported for $\P \leqslant 4 R \sup \left(j^{\frac{\sigma}{2}}, 1\right), \psi_{j}(x)=1$ for $\P \leqslant 2 R \sup \left(j^{\frac{\sigma}{2}}, 1\right)$ and
	\begin{linenomath*}
		\[
		|\partial_{x}^{\beta} \tilde{\psi}_{j}(x)| \leqslant C_{3}^{|\beta|+1}(\beta !)^{\sigma}\left(R \sup \left(j^{\sigma}, 1\right)\right)^{-|\beta|} \quad \text { for all } x \in \R^{n}, \beta \in \mathbb{Z}_{+}^{n}.
		\]
	\end{linenomath*}
	Let us now define $\varphi_{j}(x, \xi)=(1-\psi_{j}(\xi))(1-\tilde{\psi}_{j}(x)), j=0,1,2, \ldots .$ By the properties listed above, we deduce that the functions $\varphi_{j}$ are smooth on $\R^{2 n},$ supported in $Q_{4 R^2 \sup \left(j^{\sigma}, 1\right)}^{e}$ and $\varphi_{j}(x, \xi)=1$ in $Q_{16 R^2 \sup \left(j^{\sigma}, 1\right)}^{e} .$ Moreover
	\begin{linenomath*}
		\[
		\left|D_{\xi}^{\alpha} D_{x}^{\beta} \varphi_{0}(x, \xi)\right| \leqslant A\left(\frac{C}{R}\right)^{|\alpha|+|\beta|}
		\]
	\end{linenomath*}
	whereas, for $j \geqslant 1$ the functions $\varphi_{j}$ satisfy the following estimates:
	\begin{linenomath*}
		\begin{equation}\label{cut1}
			\left|D_{\xi}^{\alpha} D_{x}^{\beta} \varphi_{j}(x, \xi)\right| \leqslant A\left(\frac{C}{R}\right)^{|\alpha|+|\beta|}(\beta !)^{\sigma}\left(j^{\sigma-1}\right)^{-|\alpha|} j^{-\sigma|\beta|}
		\end{equation}
	\end{linenomath*}
	for $|\alpha| \leqslant 3 j, \beta \in \mathbb{Z}_{+}^{n}$ and
	\begin{linenomath*}
		\begin{equation}\label{cut2}
			\left|D_{\xi}^{\alpha} D_{x}^{\beta} \varphi_{j}(x, \xi)\right| \leqslant A\left(\frac{C}{R}\right)^{|\alpha|+|\beta|}(\alpha ! \beta !)^{\sigma} j^{-\sigma(|\alpha|+|\beta|)}
		\end{equation}
	\end{linenomath*}
	for $|\alpha|>3 j, \beta \in \mathbb{Z}_{+}^{n},$ with $A, C$ positive constants independent of $\alpha, \beta, R, j$.
	
	We now define
	\begin{linenomath*}
		\[
		p(x, \xi)=\sum_{j \geqslant 0} \varphi_{j}(x, \xi) p_{j}(x, \xi).
		\]
	\end{linenomath*}
	Let us first prove that $p \in AG_{\Phi,k;\sigma}^{m_1,m_2} .$ We estimate the derivatives of $p$ in the region $Q^e_ {4R^2|\alpha|^{\sigma}}$. On the support of $\varphi_{j}$ we have $\P\japxin \geqslant 4 R^2 \sup (j^{\sigma}, 1)$. Choosing $R^2 \geqslant 2^{\sigma-2} B$
	where $B$ is the same constant appearing in Definition \ref{FS2}, for $(x,\xi) \in  Q^e_ {4R^2|\alpha|^{\sigma}}$, we have $(x,\xi) \in  Q^e_ {B(j+|\alpha|)^{\sigma}}$, then the estimates (\ref{fs1}) on the $p_{j}$ hold true. Moreover, if $\alpha\neq 0$ and $\beta \neq 0$ $D_{\xi}^{\alpha} D_{x}^{\beta} \varphi_{j}(x, \xi)$ is supported in $Q_{16 R^2 \sup \left(j^{\sigma}, 1\right)},$ then $4 R^2|\alpha|^{\sigma} \leqslant \P\japxin \leqslant 16 R^2 j^{\sigma}$, and this implies
	$|\alpha| \leqslant 4 j$. Then $(\ref{fs1}),(\ref{cut1})$ and Leibniz formula give
	\begin{linenomath*}
		\begin{align*}
			|\partial_{\xi}^{\alpha} \partial_{x}^{\beta}&(p_{j}(x, \xi) \varphi_{j}(x, \xi))|\\
			&\leqslant \sum_{\alpha^{\prime} \leqslant \alpha \atop \beta^{\prime} \leqslant \beta}
			\begin{pmatrix}
				\alpha \\
				\alpha^{\prime}
			\end{pmatrix}
			\begin{pmatrix}
				\beta \\
				\beta^{\prime}
			\end{pmatrix}
			\left|\partial_{\xi}^{\alpha^{\prime}} \partial_{x}^{\beta^{\prime}} \varphi_{j}(x, \xi) \| \partial_{\xi}^{\alpha-\alpha^{\prime}} \partial_{x}^{\beta-\beta^{\prime}} p_{j}(x, \xi)\right| \\
			&\leqslant C_{1}^{|\alpha|+|\beta|+1} \alpha !(\beta !)^{\sigma}\japxin^{m_{1}-|\alpha|-j}\P^{m_{2}-|\beta|-j}\left(\frac{C_{2}}{R}\right)^{j}(j !)^{\sigma}	\\
			& \qquad \sum_{\alpha^{\prime} \leqslant \alpha \atop \beta^{\prime} \leqslant \beta} \left(\alpha^{\prime} !\right)^{-1}\left(\beta^{\prime} !\right)^{-\sigma} j^{\left|\alpha^{\prime}\right|} j^{-\sigma\left(\left|\alpha^{\prime}\right|+\left|\beta^{\prime}\right|\right)}\japxin^{\left|\alpha^{\prime}\right|}\P^{\left|\beta^{\prime}\right|} \chi_{\operatorname{supp}(\varphi_{j})}	
		\end{align*}
	\end{linenomath*}
	where $\chi_{\text {supp }\left(\varphi_{j}\right)}$ is the characteristic function of the support of $\varphi_{j} .$ Now, if $(x, \xi) \in \operatorname{supp}\left(\varphi_{j}\right)$ we have $\japxin^{-j}\P^{-j} \leqslant\left(4 R^2 j^{\sigma}\right)^{-j}$ and $\japxin^{\left|\alpha^{\prime}\right|}\P^{\left|\beta^{\prime}\right|} \leqslant C^{\left|\alpha^{\prime}\right|+\left|\beta^{\prime}\right|} j^{\sigma\left(\left|\alpha^{\prime}\right|+\left|\beta^{\prime}\right|\right)},$ while $\left(\alpha^{\prime} !\right)^{-1} j^{\left|\alpha^{\prime}\right|} \leqslant$
	$2^{j} C^{\left|\alpha^{\prime}\right|} .$ Hence
	\begin{linenomath*}
		\[
		\left|D_{\xi}^{\alpha} D_{x}^{\beta}\left(p_{j}(x, \xi) \varphi_{j}(x, \xi)\right)\right| \leqslant C_{3}^{|\alpha|+|\beta|+1} \alpha !(\beta !)^{\sigma}\japxin^{m_{1}-|\alpha|}\P^{m_{2}-|\beta|}\left(\frac{C_{4}}{R}\right)^{j}
		\]
	\end{linenomath*}
	where $C_{4}$ is a constant independent of $R .$ Then, possibly enlarging $R$ and summing over $j$ we obtain that
	\begin{linenomath*}
		\[
		\left|D_{\xi}^{\alpha} D_{x}^{\beta} p(x, \xi)\right| \leqslant C_{5}^{|\alpha|+|\beta|+1} \alpha !(\beta !)^{\sigma}\japxin^{m_{1}-|\alpha|}\P^{m_{2}-|\beta|}
		\]
	\end{linenomath*}
	for some $C_{5}>0$ independent of $\alpha, \beta$ and for $\P\japxin \geqslant 4 R^2|\alpha|^{\sigma} .$ Similarly, using estimates (\ref{fs2}), (\ref{cut2}), we can prove that $p \in AG_{\Phi,k;\sigma, \sigma}^{m_1,m_2} .$ Then $p \in AG_{\Phi,k;\sigma}^{m_1,m_2}$. To prove that $p \sim \sum_{j \geqslant 0} p_{j}$ we observe that for $\P\japxin \geqslant 16 R^2 N^{\sigma},$ we have
	\begin{linenomath*}
		\[
		p(x, \xi)-\sum_{j<N} p_{j}(x, \xi)=\sum_{j \geqslant N} \varphi_{j}(x, \xi) p_{j}(x, \xi)
		\]
	\end{linenomath*}
	which we can estimate as before. 
\end{proof}

\begin{prop}
	Let $p \in AG^{0,0}_{\Phi,k;\sigma}$ and $\theta \geq \sigma$. If $p \sim 0$ in $FAG^{0,0}_{\Phi,k;\sigma}$, then the operator $P$ is $(\Phi,\theta)$-regularizing.
\end{prop}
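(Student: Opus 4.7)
The plan is to follow the structure of the proof of Proposition \ref{zero} in Appendix II, suitably adapted to the weaker Gevrey restriction $\theta \geq \sigma$. The strategy is to first extract a pointwise exponential-decay estimate on the symbol $p$ from the condition $p \sim 0$ together with the intrinsic regularity of $p \in AG^{0,0}_{\Phi,k;\sigma}$, and then translate this into an exponential-decay estimate on the kernel $K_P(x,y) = \int e^{i(x-y)\xi} p(x,\xi)\,\textit{\dj}\xi$. Once the kernel is shown to lie in $\mathcal{M}^\theta_{\Phi,k}(\R^{2n})$, the representation $Pu(x) = \langle K_P(x,\cdot), u\rangle$ immediately yields the continuous extension $P \colon {\mathcal{M}^\theta_{\Phi,k}}'(\R^n) \to \mathcal{M}^\theta_{\Phi,k}(\R^n)$, which is precisely the $(\Phi,\theta)$-regularizing property.

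To extract the symbol estimate, I identify $p$ with the formal sum $p_0 = p$, $p_j = 0$ for $j \geq 1$. The hypothesis $p \sim 0$ in $FAG^{0,0}_{\Phi,k;\sigma}$ then yields, for every $N \geq 1$ and all $\alpha,\beta \in \mathbb{Z}_+^n$, on the exterior region $Q^e_{B(N+|\alpha|)^\sigma}$,
\begin{equation*}
|D_\xi^\alpha \partial_x^\beta p(x,\xi)| \leq A\,C^{|\alpha|+|\beta|+2N}\,\alpha!(\beta!)^\sigma (N!)^\sigma\,\japxin^{-N-|\alpha|}\,\P^{-N-|\beta|}.
\end{equation*}
On the sub-region $\{\P\japxin \geq 2^\sigma B(|\alpha|+1)^\sigma\}$ the admissible range of $N$ extends up to a constant multiple of $(\P\japxin)^{1/\sigma}$, and the standard infimum bound $\inf_N (N!)^\sigma (C^2/(\P\japxin))^N \leq C'\exp(-c(\P\japxin)^{1/\sigma})$ (cf.\ \cite[Lemma 6.3.10]{nicRodi}), together with $\theta \geq \sigma$, delivers
\begin{equation*}
|D_\xi^\alpha \partial_x^\beta p(x,\xi)| \leq C_1^{|\alpha|+|\beta|+1}(\alpha!\beta!)^\theta\,\exp\bigl(-a(\P\japxin)^{1/\theta}\bigr).
\end{equation*}
On the complementary region $\{\P\japxin < 2^\sigma B(|\alpha|+1)^\sigma\}$, condition (ii) of Definition \ref{FS2} applied to $p_0 = p$ with $B_0 = 2^\sigma B$ furnishes the uniform bound $|D_\xi^\alpha \partial_x^\beta p| \leq C_0^{|\alpha|+|\beta|+1}(\alpha!\beta!)^\sigma$; since $(\P\japxin)^{1/\theta} \lesssim |\alpha|$ there, the exponential factor $e^{a(\P\japxin)^{1/\theta}}$ costs at most a geometric factor $C^{|\alpha|}$, so the same global estimate holds throughout $\R^{2n}$.

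With this pointwise estimate on $p$ in hand, the final step is an integration-by-parts argument on the oscillatory integral defining $K_P$, essentially identical to the kernel theorem in Appendix II: using the operator $L = M_{2\theta,\lambda}(x-y)^{-1}\sum_j (\lambda^j/(j!)^{2\theta})(1-\Delta_\xi)^j$ to transfer $\xi$-derivatives and invoking the lower bound (\ref{ineq}) for $M_{2\theta,\lambda}(x-y)$, combined with the decay of $p$ in $\xi$, one produces the required kernel estimate $|D_x^\alpha D_y^\beta K_P(x,y)| \leq C^{|\alpha|+|\beta|+1}(\alpha!\beta!)^\theta \exp(-a'(\P\Phi(y))^{1/\theta})$, placing $K_P$ in $\mathcal{M}^\theta_{\Phi,k}(\R^{2n})$. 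I expect the main obstacle to be the careful combination of the exterior and interior regional estimates: the exterior infimum must be executed under the constraint $N+|\alpha| \leq (\P\japxin/B)^{1/\sigma}$, while in the interior region the uniform Gevrey bound from condition (ii) must be weighted against $e^{a(\P\japxin)^{1/\theta}}$. This careful accounting is what makes $\theta \geq \sigma$ the sharp threshold here, in contrast to $\theta \geq 2(\sigma-1)$ in Proposition \ref{zero}: Definition \ref{FS2} carries only a $(j!)^\sigma$ penalty where Definition \ref{FS} carries $(j!)^{2\sigma-2}$, so the infimum over $N$ of $(N!)^\sigma (\P\japxin)^{-N}$ directly produces $e^{-c(\P\japxin)^{1/\sigma}}$ without additional degradation of the Gevrey index.
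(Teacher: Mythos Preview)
Your proposal is correct and takes essentially the same approach as the paper: the paper's own proof consists solely of the sentence ``The proof is similar to that of Proposition \ref{zero},'' and you have carried out precisely that adaptation, correctly identifying that the factor $(N!)^{\sigma}$ in Definition \ref{FS2} (in place of $(N!)^{2\sigma-2}$ in Definition \ref{FS}) is what lowers the threshold from $\theta \geq 2(\sigma-1)$ to $\theta \geq \sigma$. Your treatment is in fact more explicit than the paper's, since you spell out the interior/exterior splitting via condition (ii) of Definition \ref{FS2} and the passage from the symbol estimate to the kernel estimate, both of which the paper leaves implicit.
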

The proof is similar to that of Proposition \ref{zero}. 

Using the techniques used in the calculus of $OPAG_{\Phi,k,\sigma;\sigma,\sigma}^{m_1,m_2}$, one can examine the stability of the classes $OPAG^{m_1,m_2}_{\Phi,k;\sigma}$ under transposition, composition and construction of parametrices. One can consider $\theta \geq \sigma$ and prove  Proposition \ref{trans}, Theorem \ref{comp} and Theorem \ref{ell} for the class $OPAG_{\Phi,k;\sigma}^{m_1,m_2}$. 

\addcontentsline{toc}{section}{References}
\nocite{*}

\end{document}